\newtheorem{theorem}{Theorem}[section]
\newtheorem{proposition}[theorem]{Proposition}
\newtheorem{lemma}[theorem]{Lemma}
\newtheorem{corollary}[theorem]{Corollary}
\theoremstyle{definition}
\newtheorem{definition}[theorem]{Definition}
\newtheorem{assumption}[theorem]{Assumption}
\newtheorem{remark}[theorem]{Remark}
\numberwithin{equation}{section}
\begin{document}
\baselineskip=15pt

\title[Pseudo-real Higgs $G$-bundles on K\"ahler manifolds]{Pseudo-real
principal Higgs bundles on compact K\"ahler manifolds}

\author[I. Biswas]{Indranil Biswas}

\address{School of Mathematics, Tata Institute of Fundamental
Research, Homi Bhabha Road, Bombay 400005, India}

\email{indranil@math.tifr.res.in}

\author[O. Garc\'{\i}a-Prada]{Oscar Garc\'{\i}a-Prada}

\address{Instituto de Ciencias Matem\'aticas, C/ Nicol\'as
Cabrera, no. 13--15, Campus Cantoblanco, UAM, 28049 Madrid,
Spain}

\email{oscar.garcia-prada@icmat.es}

\author[J. Hurtubise]{Jacques Hurtubise}

\address{Department of Mathematics, McGill University, Burnside
Hall, 805 Sherbrooke St. W., Montreal, Que. H3A 2K6, Canada}

\email{jacques.hurtubise@mcgill.ca}

\subjclass[2000]{14P99, 53C07, 32Q15}

\keywords{Pseudo-real bundle, real form, Einstein-Hermitian
connection, Higgs bundle, polystability}

\date{}

\begin{abstract}
Let $X$ be a compact connected K\"ahler manifold equipped with an anti-holomorphic
involution which is compatible with the K\"ahler structure. Let $G$ be a
connected complex reductive affine algebraic group equipped with a real
form $\sigma_G$. We define pseudo-real
principal $G$--bundles on $X$; these are generalizations of real algebraic
principal $G$--bundles over a real algebraic variety. Next we define stable,
semistable and polystable pseudo-real principal $G$--bundles. Their
relationships with the usual stable, semistable and polystable principal
$G$--bundles are investigated. We then prove that the following
Donaldson--Uhlenbeck--Yau type correspondence holds: a pseudo-real
principal $G$--bundle admits a compatible Einstein-Hermitian connection if and
only if it is polystable. A bijection between the following two sets
is established:
\begin{enumerate}
\item The isomorphism classes of polystable pseudo-real principal $G$--bundles
such that all the rational characteristic classes of the underlying
topological principal $G$--bundle vanish.

\item The equivalence classes of twisted representations of the extended
fundamental group of $X$ in a $\sigma_G$--invariant maximal compact subgroup of
$G$. (The twisted representations are defined using the central element in
the definition of a pseudo-real principal $G$--bundle.)
\end{enumerate}

All these results are also generalized to the pseudo-real Higgs $G$--bundle.
\medskip

\noindent
\textsc{R\'esum\'e.}\,
Soit $X$ une vari\'et\'e k\"ahlerienne compacte et connexe, \'equip\'ee d'une 
involution antiholomorphe compatible avec la structure K\"ahlerienne. Soit $G$ un 
groupe alg\'ebrique affine complexe, connexe et munie d'une forme r\'eelle $\sigma_G$. 
Nous d\'efinissons des $G$--fibr\'es principaux holomorphes pseudo-r\'eels sur $X$, ce 
qui g\'en\'eralise la notion de $G$--fibr\'e principal r\'eel sur une vari\'et\'e 
r\'eelle. Nous introduisons ensuite les notions de $G$-fibr\'e principal pseudo-r\'eel 
stable, semi-stable et polystable. La relation de ces concepts avec les notion 
usuelles de $G$-fibr\'e principal stable, semi-stable et polystable est discut\'ee. 
Nous d\'emontrons ensuite qu'il existe une correspondance de type 
Donaldson--Uhlenbeck--Yau: un $G $-fibr\'e principal holomorphe pseudo-r\'eel admet 
une connection Hermite-Einstein compatible si et seulement si il est polystable. Nous 
\'etablissons ensuite une bijection entre les deux ensembles suivants:
\begin{enumerate}
\item Les classes d'isomorphisme de $G$--fibr\'es principaux holomorphes 
pseudo-r\'eels sur $X$, dont toutes les classes caract\'eristiques rationnelles du 
$G$-fibr\'e topologique sous-jacent s'annulent;

\item Les classes d'\'equivalence de repr\'esentations tordues du groupe fondamental 
\'etendu de $X$ dans un sous-groupe maximal compact $\sigma_G$--invariant de $G$. (Les 
repr\'esentations tordues sont d\'efinies en utilisant l'\'el\'ement central qui entre 
dans la d\'efinition d'un $G$-fibr\'e principal pseudo-r\'eel.)
\end{enumerate}

Tous ces r\'esultats sont ensuite g\'en\'eralis\'es au cas du $G$--fibr\'e de Higgs 
pseudo-r\'eel.
\end{abstract}

\maketitle

\section{Introduction}\label{sec1}

Let $G$ be a connected reductive affine algebraic group defined over $\mathbb C$.
Let $$\sigma_G\, :\, G\, \longrightarrow\, G$$ be 
a real form on $G$. Fix a maximal compact subgroup $K_G\, \subset\, G$
such that $\sigma_G(K_G)\,=\, K_G$. Also, fix an element $c$ in the center of $K_G$
such that $\sigma_G(c)\,=\,c$. Let $(X\, ,\omega)$ be a compact connected
K\"ahler manifold equipped with an anti-holomorphic involution $\sigma_X$
such that $\sigma^*_X\omega\,=\, -\omega$.

Using $c$, we define pseudo-real principal $G$--bundles on $X$ (see
Definition \ref{def1}). We define stable,
semistable and polystable pseudo-real principal $G$--bundles on $X$.
These are related to the usual semistable and polystable principal $G$--bundles 
in the following way:

\begin{proposition}\label{prop-i-1}
A pseudo-real principal $G$--bundle $(E_G\, ,\rho)$ on $X$ is semistable
(respectively, polystable) if and only if the underlying holomorphic
principal $G$--bundle $E_G$ is semistable (respectively, polystable).
\end{proposition}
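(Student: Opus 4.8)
The plan is to prove both directions of the equivalence by exploiting the extra structure carried by the involution $\rho$ on a pseudo-real bundle. The key observation is that $\rho$ covers the anti-holomorphic involution $\sigma_X$ of the base, and since $\sigma_X^*\omega = -\omega$, pulling a holomorphic reduction back along $\rho$ and $\sigma_X$ produces again a holomorphic reduction, but with the degree negated. So the natural strategy is: given a destabilizing (or de-polystabilizing) object for the underlying $G$-bundle $E_G$, I produce from it, via $\rho$, a second such object, and then combine the two to violate the pseudo-real (semi)stability/polystability condition — and conversely, any pseudo-real destabilizing object is in particular an ordinary one.

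**First I would** set up the precise bookkeeping. Recall that (semi)stability of a principal $G$-bundle is tested on reductions $E_P \subset E_G$ to parabolic subgroups $P \subset G$ together with antidominant (or dominant, depending on convention) characters of $P$, and the numerical criterion is a sign condition on $\deg E_P(\chi)$. Because $\rho$ is anti-holomorphic and $\sigma_G$ is a real form, a reduction $E_P$ pulls back under $\rho$ to a holomorphic reduction of $E_G$ to $\sigma_G(P)$; and since integration against $\omega$ picks up a sign from $\sigma_X^*\omega = -\omega$, the degree of the pulled-back reduction is the negative of the original. **The easy direction is** "pseudo-real semistable $\Rightarrow$ $E_G$ semistable" read contrapositively: a genuine destabilizing reduction $E_P$ of $E_G$ yields, together with its $\rho$-transform, enough data to build a $\rho$-invariant destabilizing configuration, contradicting pseudo-real semistability. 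Conversely, semistability of $E_G$ trivially restricts to the subclass of $\rho$-invariant reductions, giving pseudo-real semistability.

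**For the polystable case** I would argue similarly but keep track of the Levi reduction. If $E_G$ is polystable, it admits a reduction to a Levi $L$ of some parabolic such that the associated $L$-bundle is stable (as a $G/[L,L]$-bundle, with the appropriate vanishing of the relevant characters). The point is to upgrade such a reduction to a $\rho$-compatible one: averaging or pairing the given reduction with its $\rho$-image, using that $\sigma_G(L)$ is again a Levi and uniqueness of the canonical polystable (socle) reduction, forces the reduction to be preserved by $\rho$. I expect to invoke the uniqueness of the Harder–Narasimhan and socle filtrations here, since a canonical object must be carried to itself by any automorphism-like transformation, in particular by the composite of $\rho$ and complex conjugation.

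**The main obstacle** will be the bookkeeping of signs and the interaction of anti-holomorphicity with the real form: one must check carefully that the $\rho$-transform sends holomorphic parabolic reductions to holomorphic reductions (rather than anti-holomorphic ones) and that the degree genuinely negates, so that the two inequalities coming from $E_P$ and its transform combine correctly rather than cancel trivially. Equivalently, the delicate point is establishing that the canonical (socle/Levi) reduction witnessing polystability of $E_G$ is automatically $\rho$-invariant; this is where uniqueness of the canonical reduction does the real work, and where I would spend most of the argument.
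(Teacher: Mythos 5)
There are genuine gaps here, beginning with the sign computation on which your whole strategy rests. The degree of the $\widetilde{\rho}$--transform of a subsheaf of ${\rm ad}(E_G)$ is \emph{not} negated: conjugation flips the sign of $c_1$, but $\sigma^*_X\omega\,=\,-\omega$ together with the fact that the anti-holomorphic map $\sigma_X$ preserves or reverses orientation according to the parity of $\dim_{\mathbb C}X$ contributes a compensating sign, and the net effect is $\deg \sigma^*_X\overline{F}\,=\,\deg F$. This degree--preservation is exactly what makes the theory work, and your plan is internally inconsistent on the point: you assert the degree negates, yet you also want the $\rho$--transform of a destabilizing reduction to be again destabilizing. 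Moreover, ``combine the two to violate the pseudo-real condition'' is not an argument --- two parabolic reductions cannot in general be intersected or summed to produce a third. The mechanism in Lemma \ref{lem1} and Lemma \ref{lem-s} is different and sharper: one takes the \emph{canonical} destabilizing object, namely the middle term of the Harder--Narasimhan filtration of ${\rm ad}(E_G)$ (respectively, of the socle filtration), which is a parabolic subalgebra bundle over a big open subset by \cite{AB}; since $\widetilde{\rho}$ preserves ranks and degrees, uniqueness of these filtrations forces $\widetilde{\rho}$--invariance. You invoke this uniqueness only in passing, and for the direction where it does not suffice.

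The deeper gap is in the direction ``$E_G$ polystable $\Rightarrow$ $(E_G\, ,\rho)$ polystable'' (Corollary \ref{cor3}). Your plan is to take a Levi reduction witnessing polystability of $E_G$ and argue that it is $\rho$--invariant by ``uniqueness of the canonical polystable (socle) reduction.'' But the Levi reduction is not canonical: already for $W\oplus W$ with $W$ stable there are many splittings, so no uniqueness argument applies, and the socle of a polystable bundle is the whole bundle. This is precisely where the paper must invoke the Donaldson--Uhlenbeck--Yau correspondence: the Einstein--Hermitian \emph{connection} is unique, hence preserved by $\rho$, but the Hermitian reduction inducing it is not unique; Proposition \ref{prop-i} produces a $\rho$--invariant reduction by parametrizing the reductions inducing $\nabla$ by flat sections $s$ of the real subbundle $\mathcal S$ of ${\rm ad}(E_G)$ and showing that $\exp(s_0/2)(E^0_{K_G})$ --- the fibrewise midpoint of the geodesic in $G/K_G$ joining a reduction to its $\rho$--image --- is a fixed point. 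The $\rho$--invariant stable Levi subalgebra bundle is then manufactured in Proposition \ref{prop3} as $({\mathfrak p}\cap {\rm ad}(E_{K_G}))\otimes_{\mathbb R}{\mathbb C}$ for a minimal $\rho$--invariant degree--zero parabolic subalgebra bundle ${\mathfrak p}$. Your ``averaging or pairing'' is the right instinct, but carrying it out requires this analytic input (or an equally nontrivial substitute); as written, that step is missing.
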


Proposition \ref{prop-i-1} is proved in Lemma \ref{lem1}, Lemma \ref{lem2}
and Corollary \ref{cor3}.

\begin{theorem}\label{th-i-1}
Let $(E_G\, ,\rho)$ be a pseudo-real principal $G$--bundle on $X$. The
following two statements are equivalent:
\begin{enumerate}
\item $(E_G\, ,\rho)$ is polystable.

\item The holomorphic principal $G$--bundle $E_G$ has an Einstein--Hermitian
reduction of structure group $E_{K_G}\, \subset\, E_G$ to the maximal
compact subgroup $K_G$ such that $\rho(E_{K_G})\,=\, E_{K_G}$.
\end{enumerate}
\end{theorem}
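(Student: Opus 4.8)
The plan is to reduce the whole statement to the classical Donaldson--Uhlenbeck--Yau correspondence for holomorphic principal $G$--bundles, combined with Proposition \ref{prop-i-1}, and then to upgrade a given Einstein--Hermitian reduction to a $\rho$--invariant one by a fixed--point argument. The two implications split into an easy half and the substantive half.

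First I would dispose of $(2)\Rightarrow(1)$. Suppose $E_G$ carries an Einstein--Hermitian reduction $E_{K_G}$ to $K_G$. The classical Hitchin--Kobayashi correspondence for holomorphic principal bundles over a compact K\"ahler manifold then forces $E_G$ to be polystable, and Proposition \ref{prop-i-1} immediately promotes this to polystability of the pseudo-real bundle $(E_G,\rho)$. For this direction the compatibility $\rho(E_{K_G})=E_{K_G}$ plays no role; it is the extra structure that the converse must manufacture.

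For $(1)\Rightarrow(2)$ I start from $(E_G,\rho)$ polystable, which by Proposition \ref{prop-i-1} means that $E_G$ is polystable, so the classical correspondence supplies \emph{some} Einstein--Hermitian reduction $E_{K_G}^{0}\subset E_G$. The whole content of the theorem is to replace it by a $\rho$--invariant one. The first step is to verify that $\rho(E_{K_G}^{0})$ is again an Einstein--Hermitian $K_G$--reduction: it is a $K_G$--reduction because $\rho$ is $\sigma_G$--equivariant and $\sigma_G(K_G)=K_G$, and it is Einstein--Hermitian because $\rho$ is conjugate--linear and covers $\sigma_X$ with $\sigma_X^{*}\omega=-\omega$, so that the conjugation and the sign reversal of $\omega$ compensate one another in the equation $\sqrt{-1}\,\Lambda_\omega F=\text{const}$ satisfied by the Chern connection. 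This curvature bookkeeping is routine, but it is exactly where the hypothesis $\sigma_X^{*}\omega=-\omega$ is used essentially.

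The decisive step, and the part I expect to be the main obstacle, is the fixed--point argument. On a polystable $E_G$ the Einstein--Hermitian reductions form a single orbit of the reductive automorphism group $\mathrm{Aut}(E_G)$, unique up to its unitary part; equivalently this orbit carries the structure of a complete, simply connected space of non--positive curvature (modeled fiberwise on the symmetric space $G/K_G$). The assignment $R\mapsto\rho(R)$ is an isometry of this space, and since $\rho^{2}$ is the action of the central element $c\in K_G$, which fixes every $K_G$--reduction, this isometry is an involution, generating a finite group with bounded orbits. By the Cartan fixed--point theorem such a group has a fixed point $E_{K_G}$, and $\rho(E_{K_G})=E_{K_G}$ is precisely the reduction demanded in $(2)$. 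The difficulty is thus not the passage to the classical correspondence but the marriage of the uniqueness statement for Einstein--Hermitian reductions with the geometry of the non--positively curved space of reductions, keeping careful track of $\rho^{2}=c$ and the $\sigma_G$--invariance of $K_G$ throughout. The Higgs version goes through verbatim, with Simpson's Hitchin--Kobayashi correspondence replacing the classical one and the curvature computation now involving the Higgs field as well.
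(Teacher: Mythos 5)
Your $(1)\Rightarrow(2)$ argument is essentially the paper's: Lemma \ref{lem2} gives polystability of $E_G$, the classical correspondence gives some Einstein--Hermitian reduction, one checks that $\rho$ sends Einstein--Hermitian reductions to Einstein--Hermitian reductions (uniqueness of the Einstein--Hermitian connection plus $\sigma_X^*\omega=-\omega$), and the $\rho$-invariant reduction is produced as the midpoint of the geodesic joining $E^0_{K_G}$ to $\rho(E^0_{K_G})$ in the nonpositively curved space of reductions --- which is exactly your Cartan fixed-point argument, carried out in the paper as $\exp(s_0/2)(E^0_{K_G})$ with $s_0$ flat. The one point you elide is that the fixed point supplied by Cartan is a priori only a point of the ambient space of all Hermitian reductions; one must check that the midpoint actually lies in the set $\mathcal M$ of reductions inducing the Einstein--Hermitian connection, which the paper does via the flatness of $s_0$ and Lemma \ref{lem-c}. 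That is a gap in rigor, not in the idea.

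The genuine gap is in $(2)\Rightarrow(1)$. You dispose of it by saying that the classical correspondence makes $E_G$ polystable and that Proposition \ref{prop-i-1} ``immediately promotes'' this to polystability of $(E_G\,,\rho)$, adding that the compatibility $\rho(E_{K_G})=E_{K_G}$ plays no role in this direction. But the implication ``$E_G$ polystable $\Rightarrow$ $(E_G\,,\rho)$ polystable'' is not an independent input: in the paper it is Corollary \ref{cor3}, which is itself deduced from Proposition \ref{prop3} --- i.e., from the very direction $(2)\Rightarrow(1)$ you are trying to prove. Your argument is therefore circular, and no proof of the substantive point is offered. That point is nontrivial: polystability of $(E_G\,,\rho)$ requires exhibiting a $\widetilde{\rho}$-invariant parabolic subalgebra bundle together with a $\widetilde{\rho}$-invariant \emph{stable} Levi subalgebra bundle, and producing these invariant objects is exactly where the $\rho$-invariant reduction is used. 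In the paper's Proposition \ref{prop3} one takes a minimal $\widetilde{\rho}$-invariant parabolic subalgebra bundle ${\mathfrak p}\subset{\rm ad}(E_G)$ of degree zero, shows that the Einstein--Hermitian connection preserves it, and sets ${\mathcal E}:=({\mathfrak p}\cap{\rm ad}(E_{K_G}))\otimes_{\mathbb R}{\mathbb C}$; the hypothesis $\rho(E_{K_G})=E_{K_G}$ is precisely what makes $\mathcal E$ a $\widetilde{\rho}$-invariant Levi subalgebra bundle, and the minimality of $\mathfrak p$ gives its stability. So, contrary to your remark, the compatibility condition is the essential input that this direction must exploit.
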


Theorem \ref{th-i-1} is proved in Corollary \ref{cor1} and Proposition
\ref{prop3}.

Fix a point $x_0\, \in\, X$ such that $\sigma_X(x_0)\, \not=\, x_0$.
Let $\Gamma(X,x_0)$ be the homotopy classes of paths
originating from $x_0$ that end in
either $x_0$ or $\sigma_X(x_0)$. It is a group that fits in a short exact
sequence
$$
e\,\longrightarrow\, \pi_1(X,x_0)\,\longrightarrow\,\Gamma(X,x_0)\,
\stackrel{\eta}{\longrightarrow}\,
{\mathbb Z}/2{\mathbb Z}\,\longrightarrow\, e\, .
$$
Let $\widetilde{K}\, =\, K_G\rtimes ({\mathbb Z}/2{\mathbb Z})$ be the
semi-direct product constructed using the involution $\sigma_G$ of $K_G$.
Let $\text{Map}'(\Gamma(X,x_0)\, , {\widetilde K})$ be the space of all maps
$\delta\, :\, \Gamma(X,x_0)\,\longrightarrow\, {\widetilde K}$
such that $\delta^{-1}(K_G)\,=\, \pi_1(X,x_0)$. We will write
${\mathbb Z}/2{\mathbb Z}\,=\, \{0\, ,1\}$. Let
$\text{Hom}_c(\Gamma(X,x_0)\, , {\widetilde K})$ be the space of all maps
$\delta\, \in\, \text{Map}'(\Gamma(X,x_0)\, , {\widetilde K})$
such that
\begin{itemize}
\item the restriction of $\delta$ to $\pi_1(X,x_0)$ is a homomorphism of groups,

\item $\delta(g'g)\,=\, c\delta(g')\delta(g)$, if $\eta(g)\, =\,1\,=\,
\eta(g')$, where $\eta$ is the above homomorphism, and

\item $\delta(g'g)\,=\, \delta(g')\delta(g)$ 
if $\eta(g)\cdot \eta(g')\, =\,0$.
\end{itemize}
Two elements $\delta'\, ,\delta'\, \in\, \text{Hom}_c(\Gamma(X,x_0)\, , {\widetilde
K})$ are called equivalent if there is an element $g\, \in\, K_G$ such
that $\delta'(z)\,=\, g^{-1}\delta(z)g$ for all $z\, \in\, \Gamma(X,x_0)$.

We prove the following (see Theorem \ref{thm3}):

\begin{theorem}\label{th-i-2}
There is a natural bijective correspondence between the equivalence classes of
elements of ${\rm Hom}_c(\Gamma(X,x_0)\, , {\widetilde K})$, and the isomorphism
classes of polystable pseudo-real principal $G$--bundles $(E_G\, ,\rho)$
satisfying the following two conditions:
\begin{itemize}
\item $\int_X c_2({\rm ad}(E_G))\wedge\omega^{\dim_{\mathbb C}(X) -2}\,=\, 0$, and

\item for any character $\chi$ of $G$, the line bundle over $X$ associated to
$E_G$ for $\chi$ is of degree zero.
\end{itemize}
\end{theorem}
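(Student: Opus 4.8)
The plan is to reduce the statement to the classical Donaldson--Uhlenbeck--Yau correspondence for ordinary polystable $G$--bundles and then to layer onto it the extra data recorded by the pseudo-real structure $\rho$ and the central element $c$. Starting from a polystable pseudo-real bundle $(E_G,\rho)$ satisfying the two displayed vanishing conditions, Theorem~\ref{th-i-1} provides an Einstein--Hermitian reduction $E_{K_G}\subset E_G$ to $K_G$ with $\rho(E_{K_G})=E_{K_G}$. The vanishing of $\int_X c_2(\mathrm{ad}(E_G))\wedge\omega^{\dim_{\mathbb C}(X)-2}$ together with the degree--zero condition on every character line bundle forces, by the standard Chern--Weil estimate bounding the $L^2$--norm of the curvature in terms of these characteristic numbers, the Einstein--Hermitian connection on $E_{K_G}$ to be \emph{flat}. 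Its monodromy is then a homomorphism $\pi_1(X,x_0)\to K_G$, well defined up to conjugation once a point of the fibre over $x_0$ is chosen; this will be the restriction $\delta|_{\pi_1(X,x_0)}$.

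Next I would bring in $\rho$. Because polystability makes the Einstein--Hermitian reduction essentially unique and $\rho$ preserves $E_{K_G}$ while covering the anti-holomorphic involution $\sigma_X$, the map $\rho$ must intertwine the flat connection with its $\sigma_X$--pullback; hence $\rho$ is compatible with parallel transport. For $g\in\Gamma(X,x_0)$ with $\eta(g)=1$, i.e.\ a homotopy class of paths from $x_0$ to $\sigma_X(x_0)$, I define $\delta(g)\in\widetilde K$ by following parallel transport along the path with the action of $\rho$ (which carries the fibre over $\sigma_X(x_0)$ back to the fibre over $x_0$) and recording the resulting element in the nontrivial coset of $\widetilde K=K_G\rtimes(\mathbb Z/2\mathbb Z)$, the tag $1\in\mathbb Z/2\mathbb Z$ reflecting the $\sigma_G$--semilinearity. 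A direct computation, using the defining relation that $\rho\circ\rho$ is right--translation by the central element $c$, then yields precisely the three conditions defining $\mathrm{Hom}_c(\Gamma(X,x_0),\widetilde K)$: $\delta$ is a homomorphism on $\pi_1$, it is multiplicative when $\eta(g)\eta(g')=0$, and $\delta(g'g)=c\,\delta(g')\delta(g)$ when $\eta(g)=1=\eta(g')$. Changing the chosen frame over $x_0$ conjugates $\delta$ by an element of $K_G$, so its equivalence class is well defined.

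For the converse, given $\delta\in\mathrm{Hom}_c(\Gamma(X,x_0),\widetilde K)$ I would use $\delta|_{\pi_1(X,x_0)}$ to build a flat principal $K_G$--bundle, extend its structure group to obtain a holomorphic $G$--bundle $E_G$ with the required vanishing characteristic classes, and use the $\sigma_G$--semilinear odd part of $\delta$ to construct an anti-holomorphic involution $\rho$ covering $\sigma_X$; the $c$--twisted relation is exactly what guarantees that $\rho\circ\rho$ equals translation by $c$, so that $(E_G,\rho)$ is a pseudo-real bundle in the sense of Definition~\ref{def1}, polystable by Proposition~\ref{prop-i-1}. One then checks that the two assignments are mutually inverse and that equivalence of twisted representations (conjugation by an element of $K_G$) corresponds to isomorphism of pseudo-real bundles.

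The main obstacle is not the analytic flatness input, which is classical, but the careful tracking of $\rho$ and of the central element $c$ through parallel transport. In particular one must exploit the uniqueness of the Einstein--Hermitian reduction to conclude that $\rho$, a priori only a symmetry of $E_G$, is automatically parallel for the flat connection; this compatibility, together with the bookkeeping that makes $c$ rather than the identity appear in the cocycle relation for two odd elements, is the delicate, genuinely pseudo-real part of the argument.
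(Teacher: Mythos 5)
Your proposal is correct and follows essentially the same route as the paper: use the $\rho$--invariant Einstein--Hermitian reduction from Theorem \ref{th-i-1}, deduce flatness from the two vanishing conditions, and read off the $c$--twisted representation from parallel transport combined with $\rho$ on odd elements, with the reverse construction building $\rho$ from the odd part of $\delta$; these are exactly the constructions carried out in Lemmas \ref{le1}--\ref{le3} and Propositions \ref{prop4}--\ref{prop5}. The only small imprecision is that it is the Einstein--Hermitian \emph{connection}, not the reduction, that is unique (the paper's Proposition \ref{prop-i} is devoted precisely to selecting a $\rho$--invariant reduction among the many giving that connection), but since you invoke Theorem \ref{th-i-1} for the invariant reduction this does not affect the argument.
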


It may be mentioned that a polystable principal $G$--bundle $E_G$ satisfies the
above two numerical conditions if and only if all the rational characteristic
classes of $E_G$ of positive degree vanish.

In Section \ref{sec-Higgs}, we extends the above results to the more general
context of pseudo-real principal $G$--bundle on $X$ equipped with a Higgs field
compatible with the pseudo-real structure.
We prove the following (see Proposition \ref{cor-h1} and
Proposition \ref{cor-h2}):

\begin{proposition}\label{i-h1}
Let $(E_G\, ,\rho\, ,\theta)$ be a pseudo-real principal Higgs $G$--bundle. Then
the principal Higgs $G$--bundle $(E_G\, ,\theta)$ admits an Einstein--Hermitian
structure $E_{K_G}\, \subset\, E_G$ with $\rho(E_{K_G})\,=\, E_{K_G}$
if and only if $(E_G\, ,\rho\, ,\theta)$ is polystable.
\end{proposition}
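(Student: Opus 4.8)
The plan is to prove Proposition \ref{i-h1} by reducing the Higgs case to the already-established correspondence for ordinary pseudo-real principal $G$--bundles, following the same two-step strategy used to establish Theorem \ref{th-i-1} (via Corollary \ref{cor1} and Proposition \ref{prop3}). The central observation is that a Higgs field $\theta$ is an additional piece of structure that interacts with the Einstein--Hermitian equation and the stability condition in a way that parallels the non-Higgs case, so I would first isolate what is genuinely new. I would begin by recalling the definition of a pseudo-real principal Higgs $G$--bundle: the Higgs field is a section $\theta$ of $\mathrm{ad}(E_G)\otimes\Omega^{1,0}_X$ satisfying the integrability condition $\theta\wedge\theta=0$ (when $G$ is not a vector bundle group one phrases this via the Lie bracket), together with a compatibility between the pseudo-real structure $\rho$ and $\theta$ expressed through $\sigma_X$. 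The Einstein--Hermitian condition in the Higgs setting involves the curvature of the canonical connection associated to a reduction $E_{K_G}$ together with a term quadratic in $\theta$ and its conjugate.

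For the forward implication (polystable implies the existence of a compatible Einstein--Hermitian reduction), the plan is to invoke the Donaldson--Uhlenbeck--Yau correspondence for Higgs $G$--bundles, which guarantees that polystability of $(E_G,\theta)$ yields an Einstein--Hermitian metric, i.e. a reduction $E_{K_G}\subset E_G$ satisfying the Higgs Einstein--Hermitian equation. The new content is to arrange that this reduction is $\rho$--invariant, meaning $\rho(E_{K_G})=E_{K_G}$. Here I would use the same averaging/uniqueness argument as in the non-Higgs case: by Proposition \ref{prop-i-1}, $(E_G,\rho)$ is polystable if and only if $E_G$ is polystable, and in the Higgs setting the analogous statement should hold for $(E_G,\rho,\theta)$ versus $(E_G,\theta)$. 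The uniqueness of the Einstein--Hermitian reduction up to the action of the automorphism group, combined with the fact that $\rho$ carries one Einstein--Hermitian reduction to another (because $\rho$ is compatible with $\theta$ and $\sigma^*_X\omega=-\omega$ so the equation is preserved under the anti-holomorphic involution), forces $\rho(E_{K_G})$ to differ from $E_{K_G}$ by an automorphism that can be absorbed to produce a genuinely invariant reduction.

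For the converse, if $(E_G,\theta)$ admits an Einstein--Hermitian reduction $E_{K_G}$ with $\rho(E_{K_G})=E_{K_G}$, then in particular $(E_G,\theta)$ admits an Einstein--Hermitian structure, so the standard Higgs Donaldson--Uhlenbeck--Yau theorem gives that $(E_G,\theta)$ is polystable; and then the Higgs analogue of Proposition \ref{prop-i-1} yields that $(E_G,\rho,\theta)$ is polystable. This direction is essentially formal once the polystability equivalence for Higgs pseudo-real bundles is in hand, which is why I would establish that equivalence (the analogue of Lemma \ref{lem1}, Lemma \ref{lem2}, and Corollary \ref{cor3}) as a preliminary step, presumably as the stated Proposition \ref{cor-h1} or its companion.

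The hard part will be controlling the interaction of the pseudo-real structure with the Higgs field in the uniqueness argument. In the non-Higgs case, uniqueness of the Einstein--Hermitian connection up to holomorphic automorphisms is classical; with a Higgs field one must check that the relevant automorphism group preserving both the metric equation and $\theta$ is still small enough (for a polystable object it is reductive and its compact part acts transitively on the Einstein--Hermitian reductions within an isomorphism class), and that $\rho$ normalizes this structure correctly so that the invariant reduction can be extracted. Verifying that $\rho$ sends Einstein--Hermitian Higgs reductions to Einstein--Hermitian Higgs reductions requires tracking how the sign $\sigma^*_X\omega=-\omega$ combines with the anti-holomorphicity of $\rho$ and the $(1,0)$--type of $\theta$; this sign bookkeeping is the one place where the Higgs term could, a priori, fail to transform correctly, and confirming it does is the crux of the argument.
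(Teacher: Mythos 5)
Your overall strategy --- reduce to the non-Higgs arguments of Proposition \ref{prop-i} and Proposition \ref{prop3} and check that the Higgs field does not obstruct them --- is indeed the paper's strategy, but as written the plan has a circularity in the converse direction. You deduce polystability of $(E_G\, ,\rho\, ,\theta)$ from polystability of $(E_G\, ,\theta)$ by appealing to ``the Higgs analogue of Proposition \ref{prop-i-1}.'' The half of that equivalence you need is the Higgs analogue of Corollary \ref{cor3} (ordinary polystable $\Rightarrow$ pseudo-real polystable), and in the paper Corollary \ref{cor3} is \emph{not} a preliminary sheaf-theoretic fact: it is obtained by combining Proposition \ref{prop-i} with Proposition \ref{prop3}, i.e.\ from both directions of the Einstein--Hermitian correspondence itself. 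Its Higgs analogue therefore presupposes exactly the implication you are trying to establish. The preliminary results that are available (Lemma \ref{lem-h}, modelled on Lemmas \ref{lem1} and \ref{lem2}) only go from the pseudo-real notions to the ordinary ones. The paper's converse is instead a direct adaptation of Proposition \ref{prop3}: from the $\rho$-invariant Einstein--Hermitian reduction one takes a minimal $\widetilde{\rho}$-invariant parabolic subalgebra bundle ${\mathfrak p}\subset \mathrm{ad}(E_G)$ of degree zero containing $\theta$, shows the induced connection preserves it, and produces the stable $\widetilde{\rho}$-invariant Levi subalgebra bundle as $({\mathfrak p}\cap \mathrm{ad}(E_{K_G}))\otimes_{\mathbb R}{\mathbb C}$, checking that $\theta$ takes values in it. You need to run that argument; you cannot quote the polystability equivalence.

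In the forward direction your mechanism --- $\rho(E_{K_G})$ differs from $E_{K_G}$ by an automorphism that ``can be absorbed'' --- is not what actually produces the fixed point, and the genuinely new verification in the Higgs case is not the sign bookkeeping you single out. The Einstein--Hermitian \emph{connection} is unique, but the \emph{reduction} is not; Proposition \ref{prop-i} writes $\rho(E^0_{K_G})=\exp(s_0)(E^0_{K_G})$ for a section $s_0$ of ${\mathcal S}$ and takes the geodesic midpoint $\exp(s_0/2)(E^0_{K_G})$ as the invariant reduction. For this to survive the addition of $\theta$ one must know that the set of Einstein--Hermitian reductions for the Higgs bundle is still stable under $s\mapsto s/2$. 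The paper's stated new ingredient is precisely the characterization that $\exp(s)(E^0_{K_G})$ is Einstein--Hermitian for $(E_G\, ,\theta)$ if and only if $\nabla^{\rm ad}(s)=0$ \emph{and} $[s\, ,\theta]=0$ --- two conditions linear in $s$, hence preserved under halving. Without isolating the extra condition $[s\, ,\theta]=0$, the midpoint argument, which is the actual engine of the proof, is not justified.
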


The definition of an Einstein--Hermitian structure on a
principal Higgs $G$--bundle is recalled in Definition \ref{def-h-eh}.

Let ${\widetilde G} \,:=\, G\rtimes ({\mathbb Z}/2{\mathbb Z})$ be the semi-direct
product constructed using $\sigma_G$. Define ${\rm Hom}_c(\Gamma(x_0)\, ,
{\widetilde G})$ as before by replacing ${\widetilde K}$ with $\widetilde G$. See
Section \ref{sec-Higgs} for the equivalence classes of
completely reducible elements of ${\rm Hom}_c(\Gamma(x_0)\, , {\widetilde G})$.

\begin{proposition}\label{i-h2}
There is a natural bijective correspondence between the equivalence classes of
completely reducible elements of ${\rm Hom}_c(\Gamma(x_0)\, , {\widetilde G})$,
and the isomorphism classes of polystable pseudo-real principal Higgs $G$--bundles
$(E_G\, ,\rho\, ,\theta)$ satisfying the following conditions:
\begin{itemize}
\item $\int_X c_2({\rm ad}(E_G))\wedge\omega^{\dim_{\mathbb C}(X) -2}\,=\,
0$, and

\item for any character $\chi$ of $G$, the line bundle over $X$ associated to
$E_G$ for $\chi$ is of degree zero.
\end{itemize}
\end{proposition}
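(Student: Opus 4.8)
The plan is to follow the proof of Theorem~\ref{th-i-2} (carried out as Theorem~\ref{thm3}), replacing the flat $K_G$--connection produced there by the flat $G$--connection that a polystable Higgs bundle with vanishing rational characteristic classes carries. The new feature is that the Hitchin--Simpson connection, built from the Einstein--Hermitian metric together with the Higgs field, takes values in the full complex group $G$ rather than in its compact form $K_G$; this is precisely why the target of the representation is $\widetilde G$ and not $\widetilde K$.

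For the forward direction, begin with a polystable pseudo-real principal Higgs $G$--bundle $(E_G\, ,\rho\, ,\theta)$ satisfying the two numerical conditions. By Proposition~\ref{i-h1} there is an Einstein--Hermitian reduction $E_{K_G}\,\subset\, E_G$ with $\rho(E_{K_G})\,=\, E_{K_G}$. Let $\nabla$ be the Chern connection on $E_{K_G}$ and form the $G$--connection
$$
D\,=\,\nabla+\theta+\theta^{*}\, ,
$$
where $\theta^{*}$ is the adjoint of $\theta$ with respect to the Einstein--Hermitian metric. The two numerical conditions force the curvature of $D$ to vanish, by the usual Simpson computation adapted to the principal bundle setting, so $D$ is flat and defines a homomorphism $\pi_1(X,x_0)\,\longrightarrow\, G$. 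Because $\rho$ preserves $E_{K_G}$ and is compatible with $\theta$, it carries $D$ to itself while covering $\sigma_X$; parallel transport along paths that end at $\sigma_X(x_0)$ then extends the homomorphism to a map $\delta\,:\,\Gamma(X,x_0)\,\longrightarrow\,\widetilde G$. The fact that $\rho\circ\rho$ is translation by the central element $c$ translates into the cocycle relation $\delta(g'g)\,=\, c\,\delta(g')\delta(g)$ whenever $\eta(g)\,=\,1\,=\,\eta(g')$, so that $\delta\,\in\,{\rm Hom}_c(\Gamma(x_0)\, ,\widetilde G)$; polystability makes $\delta$ completely reducible.

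For the reverse direction, a completely reducible $\delta\,\in\,{\rm Hom}_c(\Gamma(x_0)\, ,\widetilde G)$ restricts on $\pi_1(X,x_0)$ to a homomorphism into $G$, hence yields a flat principal $G$--bundle with flat connection $D$. Splitting $D$ into its components and extracting the Higgs field exactly as in the Corlette--Simpson correspondence produces the pair $(E_G\, ,\theta)$, while the values of $\delta$ on the nontrivial coset $\eta^{-1}(1)$ equip $E_G$ with a pseudo-real structure $\rho$ covering $\sigma_X$ and compatible with $\theta$; the $c$--cocycle guarantees that $\rho\circ\rho$ is translation by $c$. Complete reducibility gives polystability of $(E_G\, ,\rho\, ,\theta)$ through Proposition~\ref{i-h1}, and the numerical conditions hold automatically since the bundle is flat. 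One checks finally that the two constructions are inverse to each other and match conjugation-equivalence of representations with isomorphism of pseudo-real Higgs bundles.

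The main obstacle is to verify that $\rho$ is genuinely compatible with the Hitchin--Simpson connection $D$, that is, that the $\sigma_X$--antilinear lift $\rho$ preserves not only $\nabla$ but also the combination $\theta+\theta^{*}$. This reduces to checking that $\theta^{*}$, which is defined through the Einstein--Hermitian metric, transforms under $\rho$ and $\sigma_X$ in the same manner as $\theta$ itself. Since $\rho$ preserves $E_{K_G}$ and hence the metric, this compatibility does hold, but reconciling the antilinearity of $\rho$ with the conjugate-linear definition of $\theta^{*}$ is the delicate point, and it is exactly what ensures that the resulting $\widetilde G$--valued representation carries the correct $c$--twist.
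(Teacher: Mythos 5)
Your overall strategy coincides with the paper's: run the construction of Theorem \ref{thm3} with the flat Hitchin--Simpson connection $D=\nabla+\theta+\theta^{*}$ in place of the flat Hermitian connection, quote Proposition \ref{cor-h1} for the $\rho$--invariant Einstein--Hermitian reduction in the forward direction, and use Corlette--Simpson to recover the Higgs bundle from a completely reducible representation in the reverse direction. The forward direction as you describe it is essentially the paper's argument, including the point you flag at the end: since $\rho(E_{K_G})=E_{K_G}$ and $\widetilde\rho\otimes\widehat\sigma(\theta)=\theta$, the decomposition $\theta=\theta_1+\theta_2$ along $\mathrm{ad}(E_{K_G})\oplus\mathcal S$ is preserved, so $\theta^{*}=-\overline{\theta_1}+\overline{\theta_2}$ transforms correctly and $\rho$ preserves $D$.

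The genuine gap is in the reverse direction, at the words ``compatible with $\theta$.'' From a completely reducible $\delta$ you get a flat $G$--bundle $(E_G,D)$ and a pseudo-real structure $\rho$, and Corlette gives \emph{some} harmonic reduction $E_{K_G}\subset E_G$; but the holomorphic structure and the Higgs field are extracted from the pair $(D,E_{K_G})$, so unless the harmonic reduction itself satisfies $\rho(E_{K_G})=E_{K_G}$ there is no reason for $\rho$ to be anti-holomorphic for the resulting holomorphic structure, nor for $\theta$ to satisfy $\widetilde\rho\otimes\widehat\sigma(\theta)=\theta$. This is the one piece of new work in the paper's proof: writing an arbitrary Hermitian reduction as $\exp(s)(E_{K_G})$ with $s$ a section of $\mathcal S$, one observes that $\exp(s)(E_{K_G})$ is harmonic for $(E_G,D)$ if and only if $D^{\rm ad}(s)=0$, hence the set of harmonic reductions is stable under $s\mapsto s/2$; the fixed-point (geodesic midpoint) argument of Proposition \ref{prop-i} then produces a harmonic reduction with $\rho(E_{K_G})=E_{K_G}$, exactly as the analogous observation for Einstein--Hermitian reductions is used in the proof of Proposition \ref{cor-h1}. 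You need to supply this step; once it is in place the rest of your outline goes through.
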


When $X$ is a compact Riemann surface, some of the above results were obtained
in \cite{BH}.

A comment on the definition of (semi)stability is in order.
As explained in \cite[Section 2.3]{BH}, when the base field is $\mathbb R$
(more generally, when it is not algebraically closed),
the definition in \cite{Be}, and not the one in \cite{Ra}, is the right one.
Therefore, we have to follow the definition of \cite{Be} here.

\section{Pseudo-real principal bundles}\label{sec2}

Let $X$ be a compact connected K\"ahler manifold. The real tangent bundle of $X$
will be denoted by $T^{\mathbb R}X$. The almost complex structure on $X$, which
is a smooth section of $End(T^{\mathbb R}X)\,=\,(T^{\mathbb R}X)\otimes
(T^{\mathbb R}X)^*$, will be denoted by $J$. Let
$$
\sigma_X\, :\, X\, \longrightarrow\, X
$$
be a diffeomorphism such that
$d\sigma_X\circ J\,=\, - J\circ d\sigma_X$, where
\begin{equation}\label{e-1}
d\sigma_X\, :\,T^{\mathbb R}X\, \longrightarrow\, \sigma^*_X T^{\mathbb R}X
\end{equation}
is the differential of $\sigma_X$.

Let $\omega$ be a K\"ahler form on $X$. The inner product
on $T^{\mathbb R}X$ corresponding to $\omega$ will be denoted by
$\widetilde\omega$. The K\"ahler form $\omega$ is said to be
\textit{compatible} with $\sigma_X$ if $d\sigma_X$ preserves
$\widetilde\omega$. It is straightforward to check that $\omega$ is compatible
with $\sigma_X$ if and only if $\sigma^*_X\omega\,=\, -\omega$.

The K\"ahler manifold $X$ admits a K\"ahler form compatible with
$\sigma_X$. To see this, take any K\"ahler form $\omega$ on $X$, and define
$\widetilde\omega$ as above. Let $\widetilde{\widetilde\omega}$ be the Riemannian
metric on $X$ defined by 
$$
\widetilde{\widetilde\omega}(v\, ,w)\,:=\, {\widetilde\omega}(v\, ,w)+
{\widetilde\omega}(d\sigma_X(v)\, ,d\sigma_X(w))
$$
($d\sigma_X$ is defined in \eqref{e-1}).
Since $d\sigma_X\circ J\,=\, - J\circ d\sigma_X$, and $J$ is orthogonal
with respect to $\widetilde\omega$, it follows that $\widetilde{\widetilde\omega}$
also defines a K\"ahler structure on $X$. In fact, the K\"ahler form for
$\widetilde{\widetilde\omega}$ is $\omega-\sigma^*_X \omega$, hence
the K\"ahler form is closed. This K\"ahler structure defined by
$\widetilde{\widetilde\omega}$ is clearly compatible with $\sigma_X$.

Fix a K\"ahler form $\omega$ on $X$ compatible with $\sigma_X$.
For a torsionfree coherent analytic sheaf
$F$ on $X$, define
\begin{equation}\label{dde}
\text{degree}(F)\, :=\, \int_X c_1(F)\omega^{\dim_{\mathbb C}(X)-1}
\, \in\, {\mathbb R}\, .
\end{equation}

Let $G$ be a connected reductive affine algebraic group
defined over $\mathbb C$. We fix a real form $\sigma_G$ of $G$. This means
that
$$
\sigma_G\, :\, G\, \longrightarrow\, G
$$
is an anti-holomorphic isomorphism of order two.
The Lie algebra of $G$ will be denoted by $\mathfrak g$.
The center of $G$ will be denoted by $Z_G$. Let
$$
Z_{\mathbb R}\, :=\, Z_G \cap G^{\sigma_G}
$$
be the group of fixed points in $Z_G$ for the involution $\sigma_G$.

Let $E_G$ be a holomorphic principal $G$--bundle over $X$. By
$\overline{E}_G$ we denote the $C^\infty$ principal $G$--bundle over $X$
obtained by extending the structure group of $E_G$ using the homomorphism
$\sigma_G$:
$$
\overline{E}_G \,=\, E_G\times^{\sigma_G} G\, .
$$
In other words, $\overline{E}_G$ is the quotient of $E_G\times G$ where
two points $(z_1\, ,g_1)$ and $(z_1\, ,g_1)$ are identified if there is an
element $g\, \in\, G$ such that $z_2\, =\, z_1g$ and
$g_2\, =\, \sigma_G(g)^{-1}g_1$. The total space of
$\overline{E}_G$ is canonically identified with the total space of $E_G$;
this identification $\overline{E}_G\, \longrightarrow\, E_G$
sends the equivalence class of $(z\, ,g)$ to $z\sigma_G(g)$
(see \cite[Remark 2.1]{BH}). The pullback
$\sigma^*_X\overline{E}_G$ is a holomorphic principal $G$--bundle over
$X$, although $\overline{E}_G$ is not equipped with a holomorphic
structure. The holomorphic structure is uniquely determined by the following
condition: a section of $\sigma^*_X\overline{E}_G$ defined over an open subset
$U\, \subset\, X$ is holomorphic if and only if the corresponding
section of $E_G$ over $\sigma_X(U)$ is holomorphic.

\begin{definition}\label{def1}
A {\em pseudo-real} principal $G$--bundle on $X$ is a pair
of the form $(E_G\, ,\rho)$, where $E_G\, \longrightarrow\, X$ is a
holomorphic principal $G$--bundle, and
$$
\rho\, :\, E_G\, \longrightarrow\, \sigma^*_X\overline{E}_G
$$
is a holomorphic isomorphism of principal $G$--bundles satisfying
the condition that there is an element $c\, \in\, Z_{\mathbb R}$
such that the composition
$$
E_G\, \stackrel{\rho}{\longrightarrow}\, \sigma^*_X\overline{E}_G
\, \stackrel{\sigma^*_X\overline{\rho}}{\longrightarrow}\,
\sigma^*_X\overline{\sigma^*_X\overline{E}}_G\,=\,
\sigma^*_X\sigma^*_X\overline{\overline{E}}_G\, =\, E_G
$$
coincides with the automorphism of $E_G$ defined by $z\, \longmapsto\,
zc$.

If $(E_G\, ,\rho)$ is a pseudo-real principal $G$--bundle such that
$c\,=\,e$, then it is called a {\em real principal $G$--bundle}.
\end{definition}

Using the $C^\infty$ canonical identification between $E_G$ and $\overline{E}_G$, the
isomorphism $\rho$ in Definition \ref{def1} produces an anti-holomorphic diffeomorphism
of the total space of $E_G$ over the involution $\sigma_X$. 
This diffeomorphism of $E_G$ will also be
denoted by $\rho$. Clearly, we have
\begin{equation}\label{r-m}
\rho(zg)\, =\, \rho(z)\sigma_G(g)
\end{equation}
for all $z\, \in\, E_G$ and $g\, \in\, G$. Also, $\rho^2(z)\,=\, zc$, where $c$
is the element in Definition \ref{def1}.

An \textit{isomorphism} between
two pseudo-real principal $G$--bundles $(E_G\, ,\rho)$ and
$(F_G\, ,\delta)$ is a holomorphic isomorphism of principal
$G$--bundles
$$
\mu\, :\, E_G\, \longrightarrow\, F_G
$$
such that the following diagram commutes:
$$
\begin{matrix}
E_G & \stackrel{\rho}{\longrightarrow} & \sigma^*_X\overline{E}_G\\
~\Big\downarrow\mu && ~\,\text{  }~\,\text{ }\Big\downarrow \sigma^*_X\overline{\mu}\\
F_G & \stackrel{\delta}{\longrightarrow} & \sigma^*_X\overline{F}_G
\end{matrix}
$$
where $\sigma^*\overline{\mu}$ is the holomorphic isomorphism of
principal $G$--bundles given by $\mu$; the map $\sigma^*\overline{\mu}$
coincides with $\mu$ using the above mentioned identification of the total
spaces of $E_G$ and $F_G$ with those of $\sigma^*\overline{E}_G$
and $\sigma^*\overline{F}_G$ respectively.

Let
$$
{\rm Ad}(E_G)\,:=\, E_G\times^G G\, \longrightarrow\, X
$$
be the holomorphic
fiber bundle associated to $E_G$ for the adjoint action of $G$ on itself.
So ${\rm Ad}(E_G)$ is the quotient of $E_G\times G$ where two points
$(z_1\, ,g_1)\, ,(z_2\, ,g_2)$ are identified if there is an element $g\, \in\,
G$ such that $z_2\,=\, z_1g$ and $g_2\,=\, g^{-1}g_1g$. Therefore,
the fibers of ${\rm Ad}(E_G)$ are groups identified with $G$ up to inner
automorphisms. The fiber of ${\rm Ad}(E_G)$ over any point $x\, \in\, X$ is
identified with the space of all automorphisms of the fiber $(E_G)_x$ that
commute with the action of $G$ on $(E_G)_x$; this identification is
constructed as follows: the action of
$(z_1\, ,g_1)\, \in\, (E_G)_x\times G$ on $(E_G)_x$ is $z_1g\, \longmapsto\,
z_1g_1g$.

Let
$$
{\rm Ad}(\overline{E}_G)\,:=\, \overline{E}_G\times^G G\, \longrightarrow\, X
$$
be the $C^\infty$ fiber bundle associated to $\overline{E}_G$ for the adjoint
action of $G$ on itself. The homomorphism $\sigma_G$ produces a
$C^\infty$ isomorphism of fiber bundles
$$
\alpha_E\, :\, {\rm Ad}(E_G)\,\longrightarrow\, {\rm Ad}(\overline{E}_G)
$$
whose restriction to each fiber is an isomorphism of groups. More precisely,
$\alpha_E$ sends the equivalence class of $(z\, ,g)\, \in\, E_G\times G$
to the equivalence class of $(z\, ,\sigma_G(g))\, \in\, \overline{E}_G\times G$
(recall that the fibers of $E_G$ and $\overline{E}_G$ are naturally
identified). The isomorphism $\rho$ in Definition \ref{def1} produces an
isomorphism
$$
\rho''\, :\, \text{Ad}(E_G)\, \longrightarrow\, \text{Ad}(\sigma^*_X\overline{E}_G)
\,=\, \sigma^*_X{\rm Ad}(\overline{E}_G)
$$
which is holomorphic. Let
$$
(\sigma^*_X\alpha^{-1}_E)\circ \rho''\, :\, 
{\rm Ad}(E_G)\,\longrightarrow\,\sigma^*_X{\rm Ad}(E_G)
$$
be the composition. It defines a $C^\infty$--isomorphism of fiber bundles
\begin{equation}\label{rp}
\rho'\, :\, {\rm Ad}(E_G)\,\longrightarrow\,{\rm Ad}(E_G)
\end{equation}
over the map $\sigma_X$. This map $\rho'$ is an anti-holomorphic
involution, and it preserves the group--structure of the fibers
of ${\rm Ad}(E_G)$. That $\rho'$ is indeed an involution follows
immediately from the fact that
the adjoint action of $c\, \in\, {\mathbb Z}_{\mathbb R}$ (see Definition
\ref{def1}) on $G$ is trivial.

Let
$$
{\rm ad}(E_G)\,:=\, E_G\times^G {\mathfrak g}\, \longrightarrow\, X
$$
be the holomorphic vector bundle associated to $E_G$ for the adjoint action of
$G$ on $\mathfrak g$. It is the Lie algebra bundle corresponding to
${\rm Ad}(E_G)$. The anti-holomorphic involution $\rho'$ in \eqref{rp} produces
an anti-holomorphic automorphism of order two of the vector bundle ${\rm ad}(E_G)$
\begin{equation}\label{rp2}
\widetilde{\rho}\, :\, {\rm ad}(E_G)\, \longrightarrow\, {\rm ad}(E_G)
\end{equation}
over $\sigma_X$. To describe $\widetilde{\rho}$ explicitly, recall that
${\rm ad}(E_G)$ is the quotient of $E_G\times \mathfrak g$ where
two points $(z_1\, ,v_1)$ and $(z_2\, ,v_2)$ of
$E_G\times \mathfrak g$ are identified if there is an
element $g\, \in\, G$ such that $z_2\,=\, z_1g$ and $v_2\,=\, \text{Ad}(g)(v_1)$
(the automorphism $\text{Ad}(g)$ of $\mathfrak g$ is the differential
at identity of the
automorphism of $G$ defined by $g'\, \longmapsto\, g^{-1}g'g$). Let
\begin{equation}\label{dsg}
d\sigma_G\,:\, {\mathfrak g}\, \longrightarrow\, {\mathfrak g}
\end{equation}
be the differential at identity of $\sigma_G$. The
anti-holomorphic automorphism of $E_G\times \mathfrak g$ defined by $\rho\times
d\sigma_G$ descends to an anti-holomorphic automorphism of the quotient
${\rm ad}(E_G)$; this automorphism of ${\rm ad}(E_G)$ will be denoted by
$\widetilde{\rho}$. Since the adjoint action of $Z_G$ on $\mathfrak g$ is trivial,
it follows that $\widetilde{\rho}$ is of order two. This map $\widetilde{\rho}$
preserves the Lie algebra structure of the fibers of ${\rm ad}(E_G)$.
The homomorphism in \eqref{rp2} coincides with $\widetilde{\rho}$.

For a holomorphic vector bundle $V$ on $X$, by $\overline{V}$ we will denote the
$C^\infty$ vector bundle whose underlying real vector bundle is identified with
that of $V$, while multiplication by a complex number $\lambda$
on $\overline{V}$ coincides with the multiplication by
$\overline{\lambda}$ on $V$. If $E_{\rm GL}$ is the principal $\text{GL}(r,
{\mathbb C})$--bundle associated to $V$, where $r\,=\, \text{rank}(V)$,
then $\overline{V}$ corresponds to $\overline{E}_{\rm GL}$.
The pullback $\sigma^*_X \overline{V}$ has
a natural holomorphic structure; a section of $\sigma^*_X\overline{V}$ defined over
an open subset $U\, \subset\, X$ is holomorphic if and only if the corresponding
section of $V$ over $\sigma_X(U)$ is holomorphic.

Note that $\widetilde{\rho}$ in \eqref{rp2} coincides with the holomorphic
isomorphism
$$
{\rm ad}(E_G)\, \longrightarrow\, {\rm ad}(\sigma^*\overline{E}_G)\,=\,
\sigma^*_X\overline{{\rm ad}(E_G)}
$$
given by $\rho$ in Definition \ref{def1} after we use the above conjugate linear
identification of ${\rm ad}(E_G)$ with $\overline{{\rm ad}(E_G)}$ together with
the natural identification between the total spaces of $\sigma^*_X{\rm ad}(E_G)$
and ${\rm ad}(E_G)$.

A complex linear subspace
$S\, \subset\, {\rm ad}(E_G)_x$ is called a \textit{parabolic subalgebra} if $S$ is
the Lie algebra of a parabolic subgroup of ${\rm Ad}(E_G)_x$ (a connected
Zariski closed subgroup $P$ of ${\rm Ad}(E_G)_x$ is parabolic if
${\rm Ad}(E_G)_x/P$ is compact). A holomorphic
subbundle $F\,\subset\, {\rm ad}(E_G)\vert_U$ defined over an open subset
$U\, \subset\, X$ is
called a \textit{parabolic subalgebra bundle} if for each point $x\, \in\, U$, the
fiber $F_x$ is a parabolic subalgebra of ${\rm ad}(E_G)_x$.

\begin{definition}\label{def3}
A pseudo-real principal $G$--bundle $(E_G\, ,\rho)$
over $X$ is called {\em semistable} (respectively, {\em stable})
if for every pair of the form $(U\, ,{\mathfrak p})$, where
\begin{itemize}
\item $\iota_U\, :\, U\, \hookrightarrow\, X$ is a dense open subset
with $\sigma_X(U)\,=\, U$ such that the complement $X\setminus U$
is a closed complex analytic subset of $X$ of (complex)
codimension at least two, and

\item ${\mathfrak p}\, \subsetneq\, {\rm ad}(E_G)\vert_U$ is a parabolic subalgebra
bundle over $U$ such that $\widetilde{\rho}({\mathfrak p})\,=\,{\mathfrak p}$
(see \eqref{rp2} for $\widetilde{\rho}$), and the direct image $\iota_{U*}
{\mathfrak p}$ is a coherent analytic sheaf (see Remark \ref{re1}),
\end{itemize}
we have
$$
{\rm degree}(\iota_{U*}{\mathfrak p})\, \leq\, 0~\, ~\,~{\rm (respectively,~
{\rm degree}(\iota_{U*}{\mathfrak p})\, <\, 0)}
$$
(degree is defined in \eqref{dde}).
\end{definition}

\begin{remark}\label{re1}
Let $\iota_U\, :\, U\, \hookrightarrow\, X$ is a dense open subset
such that the complement $X\setminus U$ is a closed complex analytic subset of $X$
of complex codimension at least two, and let $V$ be a holomorphic vector bundle
on $U$. If $X$ is a complex projective manifold, then the direct image
$\iota_{U*} V$ is a coherent analytic sheaf.
\end{remark}

\begin{lemma}\label{lem1}
A pseudo-real principal $G$--bundle $(E_G\, ,\rho)$
over $X$ is semistable if and only if the vector bundle ${\rm ad}(E_G)$
is semistable.

A pseudo-real principal $G$--bundle $(E_G\, ,\rho)$
is semistable if and only if the principal $G$--bundle $E_G$
is semistable.
\end{lemma}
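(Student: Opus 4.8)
The plan is to prove the two equivalences by comparing the pseudo-real notion of semistability, which only tests $\widetilde\rho$--invariant parabolic subalgebra bundles, against the ordinary notions for the principal bundle $E_G$ and for the vector bundle $\mathrm{ad}(E_G)$. One implication in each case is immediate. A $\widetilde\rho$--invariant parabolic subalgebra bundle is in particular a parabolic subalgebra bundle, so if $E_G$ is semistable then $(E_G,\rho)$ is semistable. Moreover, since the adjoint representation of the reductive group $G$ has trivial determinant, $\mathrm{ad}(E_G)$ has degree zero, hence $\mu(\mathrm{ad}(E_G))=0$; so if $\mathrm{ad}(E_G)$ is semistable then every subsheaf has non-positive degree, and in particular every $\widetilde\rho$--invariant parabolic subalgebra bundle $\mathfrak p$ has $\mathrm{degree}(\iota_{U*}\mathfrak p)\leq 0$, whence $(E_G,\rho)$ is semistable. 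The substance of the lemma is the reverse implications, and for these the key point is that the pseudo-real structure forces the canonical destabilizing object to be $\widetilde\rho$--invariant.

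The crucial computation is that $\widetilde\rho$ preserves degrees of subsheaves: for a $\sigma_X$--invariant dense open $U$ and a holomorphic subsheaf $\mathcal F\subset\mathrm{ad}(E_G)\vert_U$, one has $\mathrm{degree}(\iota_{U*}\widetilde\rho(\mathcal F))=\mathrm{degree}(\iota_{U*}\mathcal F)$. Indeed, $\widetilde\rho$ is conjugate--linear on fibres and covers $\sigma_X$, so $\widetilde\rho(\mathcal F)\cong\sigma_X^*\overline{\mathcal F}$ as holomorphic sheaves and $c_1(\widetilde\rho(\mathcal F))=-\sigma_X^*c_1(\mathcal F)$. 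Writing $n=\dim_{\mathbb C}X$, the relation $\sigma_X^*\omega=-\omega$ gives $\sigma_X^*(\omega^{n-1})=(-1)^{n-1}\omega^{n-1}$, while anti-holomorphicity of $\sigma_X$ gives $\int_X\sigma_X^*\beta=(-1)^n\int_X\beta$ for a top form $\beta$. The three sign contributions --- the factor $-1$ from $c_1(\overline{\mathcal F})=-c_1(\mathcal F)$, the factor $(-1)^{n-1}$ from $\sigma_X^*(\omega^{n-1})$, and the factor $(-1)^n$ from the orientation --- multiply to $+1$, giving the asserted equality. Since $\widetilde\rho$ also preserves rank and the Lie--algebra structure of the fibres of $\mathrm{ad}(E_G)$, it carries parabolic subalgebra bundles to parabolic subalgebra bundles while preserving slopes.

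Now suppose $(E_G,\rho)$ is semistable; I would first show $E_G$ is semistable. If not, then by the theory of the canonical (Harder--Narasimhan) reduction there is a unique canonical reduction $E_P\subset E_G$ to a proper parabolic $P$, defined over a dense open $U$ with $X\setminus U$ of codimension at least two, whose parabolic subalgebra bundle $\mathfrak p=\mathrm{ad}(E_P)\subsetneq\mathrm{ad}(E_G)\vert_U$ satisfies $\mathrm{degree}(\iota_{U*}\mathfrak p)>0$. Applying $\widetilde\rho$ and using the previous paragraph, $\widetilde\rho(\mathfrak p)$ is again a parabolic subalgebra bundle of the same positive degree whose associated reduction satisfies the characterizing properties of the canonical reduction; by the uniqueness of the latter, $U$ is $\sigma_X$--invariant and $\widetilde\rho(\mathfrak p)=\mathfrak p$. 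Thus $(U,\mathfrak p)$ is an admissible pair in the sense of Definition \ref{def3} with $\mathrm{degree}(\iota_{U*}\mathfrak p)>0$, contradicting the semistability of $(E_G,\rho)$. Hence $E_G$ is semistable, which is the second statement; the first then follows via the standard equivalence, valid for reductive $G$ over a compact K\"ahler manifold, between semistability of $E_G$ and semistability of the vector bundle $\mathrm{ad}(E_G)$.

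I expect the main obstacle to be the last step of the reverse implication: passing from the $\widetilde\rho$--invariance of a destabilizing object to a genuine $\widetilde\rho$--invariant parabolic subalgebra bundle defined over a $\sigma_X$--invariant open set whose complement has codimension at least two, as Definition \ref{def3} demands. This is exactly where the uniqueness of the canonical reduction --- in Behrend's sense, as stressed in the introduction --- must be combined with the degree identity of the second paragraph: once uniqueness is available, $\widetilde\rho$--invariance is automatic, but verifying that the destabilizing data genuinely descend to such a parabolic subalgebra bundle is the technical heart of the argument.
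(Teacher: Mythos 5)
Your proof is correct, and its core mechanism is the same as the paper's: a canonical destabilizing object is unique, hence automatically $\widetilde{\rho}$--invariant, hence an admissible pair in the sense of Definition \ref{def3}. The difference is in which canonical object you use and in which order you prove the two statements. The paper proves the adjoint--bundle statement first: it takes the Harder--Narasimhan filtration $V_1\subset\cdots\subset V_n$ of the vector bundle ${\rm ad}(E_G)$, invokes \cite[Lemma 2.11]{AB} to identify the middle term $V_{(n+1)/2}$ as a parabolic subalgebra bundle over a big open set, and gets $\widetilde{\rho}(V_{(n+1)/2})=V_{(n+1)/2}$ from uniqueness of the HN filtration; the principal--bundle statement is then deduced from \cite[Proposition 2.10]{AB}. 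You instead run Behrend's canonical reduction $E_P\subset E_G$ of the principal bundle, obtain $\widetilde{\rho}$--invariance of ${\rm ad}(E_P)$ from uniqueness of that reduction, and deduce the adjoint--bundle statement afterwards from the same equivalence; since \cite[Lemma 2.11]{AB} says precisely that ${\rm ad}(E_P)$ \emph{is} $V_{(n+1)/2}$, the two destabilizing objects coincide and the arguments are interchangeable. What your write-up adds, and the paper leaves implicit behind the phrase ``it follows immediately from uniqueness,'' is the explicit sign computation $(-1)\cdot(-1)^{n-1}\cdot(-1)^n=+1$ showing that $\widetilde{\rho}$ preserves degrees of subsheaves (using $\sigma_X^*\omega=-\omega$ and the orientation behaviour of $\sigma_X$); this is genuinely needed for either uniqueness argument to apply, so making it explicit is a worthwhile addition. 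The only mild imprecision on your side is the claim that uniqueness forces $U$ itself to be $\sigma_X$--invariant; it is cleaner to just replace $U$ by $U\cap\sigma_X(U)$, whose complement is still of codimension at least two.
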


\begin{proof}
If ${\rm ad}(E_G)$ is semistable, then clearly $(E_G\, ,\rho)$ is
semistable.

To prove the converse, assume that ${\rm ad}(E_G)$ is not semistable.
Let
$$
V_1\, \subset\, V_2\, \subset\, \cdots \, \subset\, V_{n-1} \, \subset\,
V_n\,=\, {\rm ad}(E_G)
$$
be the Harder--Narasimhan filtration of ${\rm ad}(E_G)$. Then $n$ is odd,
and $V_{(n+1)/2}$ is a parabolic subalgebra bundle of $\text{ad}(E_G)$ over
a dense open subset $U\, \subset\, X$ such that the complement $X\setminus U\,
\subset\, X$ is a complex analytic subset of complex
codimension at least two (see \cite[p. 216, Lemma 2.11]{AB}).

{}From the uniqueness of the Harder--Narasimhan filtration it follows
immediately that
$$
\widetilde{\rho}(V_{(n+1)/2})\,=\, V_{(n+1)/2}
$$
(see \eqref{rp2} for $\widetilde{\rho}$). Therefore, considering
$V_{(n+1)/2}\, \subset\, {\rm ad}(E_G)$ we conclude that $(E_G\, ,\rho)$
is not semistable.

The vector bundle ${\rm ad}(E_G)$ is semistable if and only if the principal
$G$--bundle $E_G$ is semistable \cite[p. 214, Proposition 2.10]{AB}. Therefore,
the second statement of the lemma follows from the first statement.
\end{proof}

\begin{lemma}\label{lem-s}
Let $(E_G\, ,\rho)$ be a stable pseudo-real principal $G$--bundle
over $X$. Then the vector bundle ${\rm ad}(E_G)$ is polystable.
Also, the principal $G$--bundle $E_G$ is polystable.
\end{lemma}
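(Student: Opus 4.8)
The plan is to prove the two assertions in the order: first that the vector bundle ${\rm ad}(E_G)$ is polystable, and then to deduce from this that $E_G$ is polystable. First I would record the two easy inputs. Since a stable pseudo-real bundle is in particular semistable, Lemma \ref{lem1} already gives that ${\rm ad}(E_G)$ is a semistable vector bundle and that $E_G$ is a semistable principal $G$--bundle. Next I would note that ${\rm degree}({\rm ad}(E_G))\,=\,0$: the Killing form on $[{\mathfrak g},{\mathfrak g}]$ together with any nondegenerate form on the center is ${\rm Ad}$--invariant, so it induces an isomorphism ${\rm ad}(E_G)\,\cong\,{\rm ad}(E_G)^*$, forcing the degree to equal its own negative. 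Thus the whole problem reduces to upgrading ``semistable of degree zero'' to ``polystable'' for ${\rm ad}(E_G)$.

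Suppose, for contradiction, that ${\rm ad}(E_G)$ is not polystable, and let $S\,\subsetneq\,{\rm ad}(E_G)$ be its socle, i.e. the maximal polystable subsheaf of slope zero; it is nonzero and proper. Being canonically attached to ${\rm ad}(E_G)$, it is preserved by the anti-holomorphic involution $\widetilde{\rho}$ of \eqref{rp2}. I would then check that $S$ is a subalgebra bundle: the fibrewise bracket maps $S\otimes S$ onto a subsheaf of ${\rm ad}(E_G)$ which, as a slope-zero quotient of the polystable slope-zero sheaf $S\otimes S$, is again polystable of slope zero, hence is contained in the socle $S$; so $[S,S]\,\subseteq\, S$. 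Passing to its saturation (still canonical, still $\widetilde{\rho}$--invariant, and of degree $\geq 0$) I obtain a proper $\widetilde{\rho}$--invariant subalgebra bundle over a dense open $U$ with $\sigma_X(U)\,=\,U$ and $X\setminus U$ of codimension $\geq 2$.

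The heart of the argument, and the step I expect to be the main obstacle, is to convert this invariant subalgebra bundle of nonnegative degree into an invariant \emph{parabolic} subalgebra bundle of degree $\geq 0$: a subalgebra need not be parabolic, so this conversion is not formal. I would handle it by producing, from the socle/Jordan--H\"older filtration of the degree-zero semistable bundle ${\rm ad}(E_G)$, a canonical parabolic subalgebra bundle, in the spirit of \cite[Lemma 2.11]{AB} (where the middle term of the Harder--Narasimhan filtration of ${\rm ad}(E_G)$ is shown to be a parabolic subalgebra bundle). I would control its degree using the Killing-form orthogonal $S^{\perp}$, which is also $\widetilde{\rho}$--invariant and of degree zero by self-duality, together with the invariance identity $B([x,y],z)\,=\,-B(y,[x,z])$; this is transparent in the $\mathrm{GL}$--model, where the stabilizer of the socle flag is automatically a parabolic subalgebra bundle of degree zero. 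Canonicity of the construction guarantees $\widetilde{\rho}$--invariance, so we obtain an invariant parabolic subalgebra bundle violating the strict inequality ${\rm degree}\,<\,0$ of Definition \ref{def3}. This contradiction shows that ${\rm ad}(E_G)$ is polystable.

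Finally I would deduce that $E_G$ is polystable. For this I would invoke the polystable analogue of the semistability equivalence \cite[Proposition 2.10]{AB} used in Lemma \ref{lem1}, namely that over a compact K\"ahler manifold a semistable principal $G$--bundle is polystable if and only if its adjoint bundle is polystable; this can be established by running the socle argument above directly at the level of $E_G$, the canonical socle reduction being $\widetilde{\rho}$--invariant and hence, if proper, contradicting Definition \ref{def3}. Since ${\rm ad}(E_G)$ has just been shown to be polystable and $E_G$ is semistable, it follows that $E_G$ is polystable, completing the proof.
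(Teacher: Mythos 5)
Your proposal follows essentially the same route as the paper: both arguments take the socle (Jordan--H\"older) filtration of the semistable, degree-zero bundle ${\rm ad}(E_G)$, use its canonical nature to get $\widetilde{\rho}$--invariance, extract from it a $\widetilde{\rho}$--invariant parabolic subalgebra bundle of degree zero (the step you flag as the main obstacle is exactly what the paper delegates to \cite[p.~218]{AB}), contradict stability of $(E_G\,,\rho)$, and then deduce polystability of $E_G$ from \cite[p.~224, Corollary 3.8]{AB}. Your extra remarks (degree zero via the Killing form, $[S,S]\subseteq S$ via polystability of tensor products) are correct details that the paper leaves implicit in its citation.
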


\begin{proof}
{}From the first part of Lemma \ref{lem1} we know that ${\rm ad}(E_G)$ is
semistable. A semistable sheaf $V$ has a unique maximal polystable subsheaf $F$ with
$$
\text{degree}(V)/\text{rank}(V)\,=\, \text{degree}(F)/
\text{rank}(F)
$$ \cite[page 23, Lemma 1.5.5]{HL}; this
$F$ is called the \textit{socle} of $V$. Assume that ${\rm ad}(E_G)$ is not
polystable. Then there is a unique filtration
\begin{equation}\label{s-f}
0\,=\, F_0\, \subset\,
F_1\, \subset\, F_2\, \subset\, \cdots \, \subset\, F_{n-1} \, \subset\,
F_n\,=\, {\rm ad}(E_G)
\end{equation}
such that for each $i\, \in\, [1\, ,n]$, the quotient $F_i/F_{i-1}$ is
the socle of ${\rm ad}(E_G)/F_{i-1}$. Then $n$ is odd,
and $F_{(n+1)/2}$ is a parabolic subalgebra bundle of $\text{ad}(E_G)$ over
a dense open subset $U\, \subset\, X$ such that the complement $X\setminus U\,
\subset\, X$ is a complex analytic subset of codimension at least
two (see \cite[p. 218]{AB}).

{}From the uniqueness of the filtration in \eqref{s-f} it follows immediately
that $\widetilde{\rho}(F_{(n+1)/2})\,=\, F_{(n+1)/2}$. Therefore, the subsheaf
$F_{(n+1)/2}\, \subset\, {\rm ad}(E_G)$ shows that $(E_G\, ,\rho)$
is not stable. In view of this contradiction, we conclude that 
${\rm ad}(E_G)$ is polystable. 

The second statement of the lemma follows from the first statement and
\cite[p. 224, Corollary 3.8]{AB}.
\end{proof}

\section{Polystable pseudo-real principal bundles and
Einstein-Hermitian connections}

Let $(E_G\, ,\rho)$ be a pseudo-real principal $G$--bundle. Let
$$
{\mathfrak p}\, \subset\, \text{ad}(E_G)
$$
be a parabolic subalgebra bundle such that $\widetilde{\rho}({\mathfrak p})\,=\,
{\mathfrak p}$, where
$\widetilde{\rho}$ is the involution in \eqref{rp2}. Let
$$
R_u({\mathfrak p})\, \subset \, {\mathfrak p}
$$
be the holomorphic subbundle over $X$ whose fiber over any point $x\, \in\, X$ is
the nilpotent radical of the parabolic subalgebra ${\mathfrak p}_x$. Therefore,
the quotient ${\mathfrak p}/R_u({\mathfrak p})$ is a bundle of reductive
Lie algebras. Note that $\widetilde{\rho}(R_u({\mathfrak p}))\,=\,
R_u({\mathfrak p})$.

A \textit{Levi subalgebra bundle} of ${\mathfrak p}$ is a holomorphic subbundle
$$
\ell({\mathfrak p})\, \subset\, {\mathfrak p}
$$
such that for each $x\, \in\, X$, the fiber
$\ell({\mathfrak p})_x$ is a Lie subalgebra of ${\mathfrak p}_x$ with the
composition
$$
\ell({\mathfrak p})\, \hookrightarrow\, {\mathfrak p}\, \longrightarrow\,
{\mathfrak p}/R_u({\mathfrak p})
$$
being an isomorphism, where ${\mathfrak p}\, \longrightarrow\,
{\mathfrak p}/R_u({\mathfrak p})$ is the quotient map.

Let $\ell({\mathfrak p})\, \subset\, {\mathfrak p}$ be a Levi subalgebra bundle
such that $\widetilde{\rho}(\ell({\mathfrak p}))\,=\, \ell({\mathfrak p})$.
Since the fibers of $\ell({\mathfrak p})$ are reductive subalgebras, we may extend
the notion of (semi)stability to $\ell({\mathfrak p})$ as follows.

\begin{definition}\label{d-l-s}
A Levi subalgebra bundle $\ell({\mathfrak p})\, \subset\, {\mathfrak p}$ with
$\widetilde{\rho}(\ell({\mathfrak p}))\,=\, \ell({\mathfrak p})$
is called {\em semistable} (respectively, {\em stable})
if for every pair of the form $(U\, ,{\mathfrak q})$, where
\begin{itemize}
\item $\iota_U\, :\, U\, \hookrightarrow\, X$ is a dense open subset
with $\sigma_X(U)\,=\, U$ such that
the complement $X\setminus U$ is a closed complex analytic subset of $X$ of complex
codimension at least two, and

\item ${\mathfrak q}\, \subsetneq\, \ell({\mathfrak p})\vert_U$ is a parabolic
subalgebra bundle over $U$ such that $\widetilde{\rho}({\mathfrak q})\,=\,
{\mathfrak q}$, and the direct image $\iota_{U*} {\mathfrak q}$ is a coherent
analytic sheaf (see Remark \ref{re1}),
\end{itemize}
we have
$$
{\rm degree}(\iota_{U*}{\mathfrak q})\, \leq\, 0~\, ~\,~{\rm (respectively,~
{\rm degree}(\iota_{U*}{\mathfrak q})\, <\, 0)}\, .
$$
\end{definition}

\begin{definition}\label{def4}
A semistable pseudo-real principal $G$--bundle $(E_G\, ,\rho)$
over $X$ is called {\em polystable} if either $(E_G\, ,\rho)$ is stable,
or there is a proper parabolic subalgebra bundle ${\mathfrak p}\,\subsetneq\,
{\rm ad}(E_G)$, and a Levi subalgebra bundle $\ell({\mathfrak p})\, \subset\,
{\mathfrak p}$, such that the following conditions hold:
\begin{enumerate}
\item $\widetilde{\rho}({\mathfrak p})\, =\, {\mathfrak p}$ and
$\widetilde{\rho}(\ell({\mathfrak p}))\, =\, \ell({\mathfrak p})$, and

\item $\ell({\mathfrak p})$ is stable (see Definition \ref{d-l-s}).
\end{enumerate}
\end{definition}

In Definition \ref{def4}, we start with a semistable pseudo-real principal
bundle to rule out the analogs of direct sum of stable vector bundles of
different slopes.

\begin{lemma}\label{lem2}
Let $(E_G\, ,\rho)$ be a polystable pseudo-real principal $G$--bundle on $X$.
Then the adjoint vector bundle ${\rm ad}(E_G)$ is polystable. Also, the
principal $G$--bundle $E_G$ is polystable.
\end{lemma}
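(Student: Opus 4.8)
The strategy is to transport the already-proven stable case (Lemma \ref{lem-s}) up a single Levi reduction. If $(E_G\, ,\rho)$ is stable there is nothing new to prove, so assume it is polystable but not stable. By Definition \ref{def4} we are then given a proper parabolic subalgebra bundle ${\mathfrak p}\, \subsetneq\, {\rm ad}(E_G)$ and a Levi subalgebra bundle $\ell({\mathfrak p})\, \subset\, {\mathfrak p}$, both preserved by $\widetilde\rho$, with $\ell({\mathfrak p})$ stable in the sense of Definition \ref{d-l-s}. Following \cite{AB}, the datum ${\mathfrak p}$ is equivalent to a holomorphic reduction $E_P\, \subset\, E_G$ of structure group to a parabolic subgroup $P\, \subset\, G$, and $\ell({\mathfrak p})$ refines it to a reduction $E_L\, \subset\, E_P$ to a Levi subgroup $L\, \subset\, P$; under this identification $\ell({\mathfrak p})\,=\, {\rm ad}(E_L)$, the adjoint bundle of the reductive--group bundle $E_L$. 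Since $L$ is reductive, ${\rm ad}(E_L)$ is self-dual up to its trivial central summand, so ${\rm degree}(\ell({\mathfrak p}))\,=\, 0$ and the relevant notion throughout is slope--zero polystability.

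The core of the proof is to upgrade the pseudo-real stability of $\ell({\mathfrak p})$ to ordinary polystability, and for this I would rerun the arguments of Lemma \ref{lem1} and Lemma \ref{lem-s} with ${\rm ad}(E_G)$ replaced by the bundle of reductive Lie algebras $\ell({\mathfrak p})\,=\, {\rm ad}(E_L)$, on which $\widetilde\rho$ acts as an anti-holomorphic involution preserving the fibrewise Lie algebra structure. If $\ell({\mathfrak p})$ were not semistable, the middle term of its Harder--Narasimhan filtration would be a parabolic subalgebra bundle over a dense open subset whose complement has codimension at least two \cite[p. 216, Lemma 2.11]{AB}; by uniqueness this term is $\widetilde\rho$--invariant, contradicting the stability of Definition \ref{d-l-s}. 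Hence $\ell({\mathfrak p})$ is semistable, and if it failed to be polystable the middle term of its socle filtration \cite[page 23, Lemma 1.5.5]{HL} would likewise be a $\widetilde\rho$--invariant parabolic subalgebra bundle \cite[p. 218]{AB}, again violating Definition \ref{d-l-s}. Therefore $\ell({\mathfrak p})\,=\, {\rm ad}(E_L)$ is polystable.

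It then remains to propagate polystability up the tower $E_L\, \subset\, E_P\, \subset\, E_G$. Because ${\rm ad}(E_L)$ is polystable, the principal $L$--bundle $E_L$ is polystable by \cite[p. 224, Corollary 3.8]{AB}. A Levi subgroup of $L$ is again a Levi subgroup of $G$, so reducing $E_L$ to a Levi $L'\, \subset\, L$ on which the reduced bundle is stable exhibits a reduction of $E_G$ to a Levi subgroup of $G$ carrying a stable bundle; combined with the semistability of $E_G$ coming from Lemma \ref{lem1}, this is precisely the condition for the principal $G$--bundle $E_G$ to be polystable. Hence $E_G$ is polystable, and ${\rm ad}(E_G)$ is polystable as well by the equivalence of \cite[p. 224, Corollary 3.8]{AB}, which is both assertions of the lemma.

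I expect the only genuine obstacle to lie in the middle paragraph: one must guarantee that the Harder--Narasimhan and socle filtrations of $\ell({\mathfrak p})$ have middle terms that are honest parabolic subalgebra bundles of $\ell({\mathfrak p})$, hence admissible test objects in Definition \ref{d-l-s}. This is exactly what the identification $\ell({\mathfrak p})\,=\, {\rm ad}(E_L)$ buys us, since it allows the structural results of \cite{AB} for adjoint bundles to be applied directly to $\ell({\mathfrak p})$; the degree--zero normalization recorded above is what makes ``slope--zero polystable'' the correct target.
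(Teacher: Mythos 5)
Your middle paragraph reproduces the paper's first step exactly: the paper also observes that condition (2) of Definition \ref{def4} forces $\ell({\mathfrak p})$ to be polystable as a vector bundle, ``with a proof identical to that of Lemma \ref{lem-s}''. Where you genuinely diverge is in how this fact is converted into polystability of ${\rm ad}(E_G)$. The paper never leaves ${\rm ad}(E_G)$: since $\ell({\mathfrak p})$ is a polystable subsheaf of slope zero of the semistable sheaf ${\rm ad}(E_G)$, it must be contained in the socle $F_1$; if ${\rm ad}(E_G)$ were not polystable, the socle filtration would have odd length $n\ge 3$ and, by the structure result of \cite[p.~218]{AB}, $F_1\subset F_{(n-1)/2}$ would be a bundle of nilpotent elements (the nilpotent radical of the parabolic subalgebra bundle $F_{(n+1)/2}$), contradicting the reductivity of the fibres of $\ell({\mathfrak p})$. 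This yields polystability of ${\rm ad}(E_G)$ directly, and only then is \cite[p.~224, Corollary 3.8]{AB} invoked to pass to $E_G$. You instead climb the tower $E_L\subset E_P\subset E_G$ through Ramanathan--type polystability of principal bundles, which is a different and longer route.

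The one step of yours that does not stand as written is the claim that a stable reduction of $E_G$ to a Levi subgroup $L'$ of $G$, ``combined with the semistability of $E_G$, is precisely the condition for the principal $G$--bundle $E_G$ to be polystable''. It is not the definition: the notion of polystability used in \cite{AB} additionally requires the induced parabolic reduction $E_{L'}(P')\subset E_G$ to be admissible, i.e.\ the line bundles associated to characters of $P'$ trivial on the centre of $G$ must all have degree zero. In your situation this can be salvaged --- a reduction to the Levi $L'$ induces reductions to both $P'$ and its opposite parabolic, and semistability applied to each forces the relevant degrees to vanish --- but that argument is absent, and as stated your final paragraph conflates the definition with a consequence of it. You also use, without proof, the correspondence between the $\widetilde\rho$--invariant Levi subalgebra bundle $\ell({\mathfrak p})\subset{\mathfrak p}$ and a global holomorphic reduction $E_L\subset E_P$ of structure group; this is standard but is exactly the kind of bookkeeping the paper's socle argument is designed to avoid. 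With those two repairs your proof closes; the paper's version is shorter because the nilpotency of the socle does all of this work at once.
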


\begin{proof}
If $(E_G\, ,\rho)$ is stable, then it
follows by Lemma \ref{lem-s}. So we assume
that $(E_G\, ,\rho)$ is not stable. From the first part of Lemma \ref{lem1} it
follows that ${\rm ad}(E_G)$ is is semistable. Assume that ${\rm ad}(E_G)$ is not
polystable. Let
$$
F_1\, \subset\, {\rm ad}(E_G)
$$
be the socle (see \eqref{s-f}).

Recalling Definition \ref{def4}, we observe that the vector bundle
$\ell({\mathfrak p})$ in Definition \ref{def4} is polystable with
a proof identical to that of Lemma \ref{lem-s} (this is due to condition (2) in
Definition \ref{def4}). Therefore, we have
\begin{equation}\label{l-c}
\ell({\mathfrak p})\, \subset\, F_1\, .
\end{equation}
But $F_{(n-1)/2}$ in \eqref{s-f} is the nilpotent radical bundle of
the parabolic subalgebra bundle $F_{(n+1)/2}\, \subset\, \text{ad}(E_G)$.
Therefore, all elements of $F_{(n-1)/2}$ are nilpotent.
In particular, all elements of $F_1$ are nilpotent. On the other hand,
$\ell({\mathfrak p})$ is a Levi subalgebra bundle; so for each
$x\, \in\, X$, the fiber $\ell({\mathfrak p})_x$ is a
reductive subalgebra of $\text{ad}(E_G)_x$. Hence \eqref{l-c}
is a contradiction. Therefore, we conclude that ${\rm ad}(E_G)$ is polystable.

The second
statement of the lemma follows from the first statement and
\cite[p. 224, Corollary 3.8]{AB}.
\end{proof}

Consider the semi-direct product $G\rtimes ({\mathbb Z}/2{\mathbb Z})$ defined
by the involution $\sigma_G$ of $G$. So we have a short exact sequence of groups
$$
e\,\longrightarrow\, G \,\longrightarrow\, G\rtimes ({\mathbb Z}/2{\mathbb Z})
\,\longrightarrow\,{\mathbb Z}/2{\mathbb Z}\,\longrightarrow\, e\, .
$$
Take a maximal compact subgroup $\widetilde{K}
\, \subset\, G\rtimes ({\mathbb Z}/2{\mathbb Z})$. Define
\begin{equation}\label{dkg}
K_G\,:=\, \widetilde{K}\cap G\, \subset\, G\, .
\end{equation}
It is a maximal compact subgroup of $G$ which is preserved by $\sigma_G$.

By a \textit{Hermitian structure} on a principal $G$--bundle $E_G$ we will
mean a $C^\infty$ reduction of structure group of $E_G$ to the subgroup $K_G$.
If $E_G$ is holomorphic, and $E_{K_G}\, \subset\, E_G$ is an
Hermitian structure, then there is a unique connection $\nabla$ on $E_{K_G}$
such that the connection on $E_G$ induced by $\nabla$ has the property that
the corresponding $C^\infty$ splitting of the Atiyah exact sequence for
$E_G$ is $\mathbb C$--linear \cite[pp. 191--192, Proposition 5]{At}. This
$\nabla$ is called the \textit{Chern connection} for the reduction
$E_{K_G}$. The connection on $E_G$ induced by $\nabla$ is also
called the Chern connection for the reduction $E_{K_G}$.

Let $E_G$ be a holomorphic principal $G$--bundle, and let $E_{K_G}\, \subset\, E_G$
be an Hermitian structure on $E_G$. The corresponding Chern connection on $E_G$ will
be denoted by $\nabla$; the curvature of $\nabla$ will be denoted by
${\mathcal K}(\nabla)$. Let
$$
\Lambda \, :\, \Omega^{p,q}_X\, \longrightarrow\, \Omega^{p-1,q-1}_X
$$
be the adjoint of the exterior
product with the K\"ahler form $\omega$. The reduction
$E_{K_G}$ is said to be an \textit{Einstein--Hermitian structure} on $E_G$ if
there is an element $\lambda$ in the center of $\mathfrak g$ such that
the section
$$
\Lambda{\mathcal K}(\nabla)\, \in\, C^\infty(X,\, \text{ad}(E_G))
$$
coincides with the one given by $\lambda$ (since the adjoint action of $G$ on the
center of $\mathfrak g$ is trivial, any element of it defines a section of
$\text{ad}(E_G)$).

A principal $G$--bundle $E_G$ admits an Einstein--Hermitian structure if and
only if $E_G$ is polystable, and, moreover, the Einstein--Hermitian connection on a
polystable principal $G$--bundle is unique \cite{Do}, \cite{UY},
\cite[p. 208, Theorem 0.1]{AB}, \cite[p. 24, Theorem 1]{RS}.
Therefore, Lemma \ref{lem2} has the following corollary:

\begin{corollary}\label{c-c}
Let $(E_G\, ,\rho)$ be a polystable pseudo-real principal $G$--bundle.
Then $E_G$ admits an Einstein--Hermitian structure.
\end{corollary}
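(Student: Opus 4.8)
The plan is to chain together two facts that are already in hand: the structural result Lemma \ref{lem2}, which translates polystability of the pseudo-real bundle into polystability of the underlying holomorphic bundle, and the classical Donaldson--Uhlenbeck--Yau correspondence for principal $G$--bundles recalled in the paragraph immediately preceding the statement.

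First I would invoke Lemma \ref{lem2}: since $(E_G\, ,\rho)$ is polystable as a pseudo-real principal $G$--bundle, the underlying holomorphic principal $G$--bundle $E_G$ is polystable in the usual sense. This is the step carrying the real content, but it has already been established, so nothing further needs to be proved here. Second, I would apply the Einstein--Hermitian correspondence cited just above (\cite{Do}, \cite{UY}, \cite{AB}, \cite{RS}), which asserts that a holomorphic principal $G$--bundle admits an Einstein--Hermitian structure if and only if it is polystable. Feeding the polystability of $E_G$ obtained in the first step into this equivalence immediately produces the desired Einstein--Hermitian structure on $E_G$.

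Since both ingredients are already available, the argument is essentially immediate and I do not anticipate a genuine obstacle; the only point warranting a remark is that the notion of polystability delivered by Lemma \ref{lem2} is precisely the one demanded by the cited correspondence, so the two hypotheses match with no further reconciliation. It is worth emphasizing that the corollary asserts only the existence of an Einstein--Hermitian structure on $E_G$, and \emph{not} one compatible with the pseudo-real structure $\rho$; accordingly, no $\rho$--invariance argument—which is the substance of the later reduction results such as Theorem \ref{th-i-1}—is needed at this stage.
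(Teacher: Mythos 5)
Your argument is correct and is exactly the paper's: the corollary is stated as an immediate consequence of Lemma \ref{lem2} combined with the Donaldson--Uhlenbeck--Yau correspondence for polystable principal $G$--bundles cited in the preceding paragraph. Your closing remark that no $\rho$--invariance is claimed here is also accurate; that refinement is deferred to Proposition \ref{prop-i} and Corollary \ref{cor1}.
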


\begin{assumption}\label{an}
Henceforth, we will always assume that $c\,\in\, Z_{\mathbb R}$ in Definition
\ref{def1} lies in $Z_{\mathbb R}\bigcap K_G$.
\end{assumption}

It was noted in \cite{BH} that without any loss of generality,
the element $c\, \in\, {\mathbb Z}_{\mathbb R}$ in Definition \ref{def1} can be taken
to be of order two (see the end of Section 2.1 of \cite{BH}). But all elements of
$Z_{\mathbb R}$ of order two lie in $Z_{\mathbb R}\bigcap K_G$. Hence
Assumption \ref{an} is not restrictive.

Let $(V\, ,h)$ be a holomorphic Hermitian vector bundle on a complex manifold $M$.
Let $h'$ be another Hermitian structure on $V$. Then there a unique $C^\infty$
endomorphism $A$ of $V$ such that $A^{*_h} \,=\, A$, and
$$
h'(v,w)\, =\, h(x)(\exp(A)(v),w)\, , ~\forall~ x\,\in\, M~\ \text{and}~\ v\, ,
w\, \in\, V_x\, ,
$$ 
where $A^{*_h}$ is the adjoint of $A$ with respect to $h$. Let $\nabla^h$ be
the Chern connection on $V$ for $h$. 

\begin{lemma}\label{lem-c}
The Chern connection on $V$ for $h'$ coincides with $\nabla^h$ if and
only if the above endomorphism $A$ is flat with respect to $\nabla^h$.
\end{lemma}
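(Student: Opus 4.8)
The plan is to use the characterization of the Chern connection as the unique connection whose $(0,1)$--part is the Dolbeault operator $\bar\partial$ of the holomorphic structure and which is compatible with the given Hermitian metric. Since $h$ and $h'$ induce the same holomorphic structure on $V$, the $(0,1)$--part of any Chern connection is $\bar\partial$ regardless of the metric; in particular $(\nabla^h)^{0,1}\,=\,\bar\partial$. Consequently $\nabla^{h'}\,=\,\nabla^h$ if and only if $\nabla^h$ is itself compatible with $h'$, i.e. $d\,h'(s,t)\,=\,h'(\nabla^h s,t)+h'(s,\nabla^h t)$ for all local $C^\infty$ sections $s,t$ of $V$. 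Thus the whole statement reduces to identifying when this metric--compatibility holds.

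Next I would compute the obstruction to this compatibility. Write $H\,:=\,\exp(A)$, so that $h'(s,t)\,=\,h(Hs,t)$, and let $\nabla^h$ also denote the induced connection on $\mathrm{End}(V)$. Using that $\nabla^h$ is compatible with $h$, together with the Leibniz rule $\nabla^h(Hs)\,=\,(\nabla^h H)s+H\nabla^h s$, a direct calculation gives $d\,h'(s,t)-h'(\nabla^h s,t)-h'(s,\nabla^h t)\,=\,h\big((\nabla^h H)s,\,t\big)$. Since $h$ is nondegenerate, this shows that $\nabla^h$ is compatible with $h'$ if and only if $\nabla^h H\,=\,0$, i.e. $H\,=\,\exp(A)$ is parallel with respect to $\nabla^h$. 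Combining with the previous paragraph, $\nabla^{h'}\,=\,\nabla^h$ if and only if $\nabla^h\exp(A)\,=\,0$.

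It remains to prove the equivalence $\nabla^h\exp(A)\,=\,0\iff\nabla^h A\,=\,0$, and this is the one genuinely non-formal point. One direction is immediate: if $\nabla^h A\,=\,0$, then by the Leibniz rule $\nabla^h(A^n)\,=\,0$ for every $n$, and since $\nabla^h$ commutes with the convergent exponential series we get $\nabla^h\exp(A)\,=\,0$. The converse is where the hypothesis that $A$ is self-adjoint with respect to $h$ is essential. Here I would use that $A$ is fibrewise diagonalizable with real eigenvalues, so that $\mathrm{ad}_A$ acting on $\mathrm{End}(V)$ has real spectrum, together with the standard formula for the covariant derivative of the exponential, $\nabla^h\exp(A)\,=\,\exp(A)\cdot\phi(\mathrm{ad}_A)\big(\nabla^h A\big)$, where $\phi(\lambda)\,=\,(1-e^{-\lambda})/\lambda$. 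Because $\phi$ is strictly positive on the real line, the operator $\phi(\mathrm{ad}_A)$ is invertible, whence $\nabla^h\exp(A)\,=\,0$ forces $\nabla^h A\,=\,0$. (Alternatively, one can argue that a parallel self-adjoint $H$ has locally constant eigenvalues and parallel eigenbundles, so that $A\,=\,\log H$ is parallel by functional calculus.) The main obstacle is thus precisely this last step: without self-adjointness the differential of $\exp$ may be singular and the equivalence can fail, so the argument must exploit the reality of the spectrum of $A$.
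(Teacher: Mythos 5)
Your proof is correct, and it actually supplies details that the paper's own proof suppresses, while taking a somewhat different route. The paper's entire argument is the asserted identity $\nabla^{h'}-\nabla^h=\nabla^h(A)$; taken literally this is only a shorthand, since the standard local computation expresses the difference of the two Chern connections in terms of the covariant derivative of $\exp(A)$ (essentially $\exp(A)^{-1}$ times the $(1,0)$--part of $\nabla^h\exp(A)$), not of $A$ itself, and the paper never addresses the passage from flatness of $\exp(A)$ to flatness of $A$. You instead argue via the uniqueness characterization of the Chern connection: both connections have $(0,1)$--part $\bar\partial$, so equality is equivalent to $\nabla^h$ being $h'$--compatible, and your Leibniz computation identifies the obstruction as the full covariant derivative $\nabla^h\exp(A)$ (a small bonus of this route: you get the vanishing of the whole derivative rather than just its $(1,0)$--part, without having to invoke self-adjointness at that stage). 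You then correctly isolate the one non-formal point, $\nabla^h\exp(A)=0\iff\nabla^h A=0$, and prove it using the derivative-of-the-exponential formula together with the reality of the spectrum of $\mathrm{ad}_A$ coming from $A^{*_h}=A$ (or, equivalently, the parallel-eigenbundle argument). That is precisely the step where the hypothesis on $A$ is used and which the paper's one-line proof leaves implicit; your version is the more complete one, and the two arguments agree on where the actual content lies.
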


\begin{proof}
Let $\nabla^{h'}$ be the Chern connection on $V$ for $h'$. Then
$$
\nabla^{h'}-\nabla^h \,=\, \nabla^h (A)
$$
(both sides are $C^\infty$ one-forms with values in $\text{ad}(E_G)$).
\end{proof}

Recall that $\rho$ in Definition \ref{def1} produces an anti-holomorphic
diffeomorphism of $E_G$ which is also denoted by $\rho$ (see \eqref{r-m}).

\begin{proposition}\label{prop-i}
Let $(E_G\, ,\rho)$ be a pseudo-real principal $G$--bundle such that
the principal $G$--bundle $E_G$ is polystable. Then $E_G$
admits an Einstein--Hermitian structure
$$
E_{K_G}\, \subset\, E_G
$$
such that $\rho(E_{K_G})\,=\, E_{K_G}$.
\end{proposition}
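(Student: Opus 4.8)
The plan is to start from an arbitrary Einstein--Hermitian reduction, transport it by $\rho$, and exploit the uniqueness of the Einstein--Hermitian connection together with an averaging argument in the fibrewise symmetric space to manufacture a $\rho$--invariant reduction.

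First I would fix a starting point. Since $E_G$ is polystable, the results recalled just before Corollary \ref{c-c} provide an Einstein--Hermitian reduction $E_{K_G}\subset E_G$; let $\nabla$ be its Chern connection, which is the \emph{unique} Einstein--Hermitian connection on $E_G$. Because $\sigma_G(K_G)=K_G$, the relation $\rho(zg)=\rho(z)\sigma_G(g)$ in \eqref{r-m} shows that $\rho(E_{K_G})$ is again a $C^\infty$ reduction of $E_G$ to $K_G$; denote it $E_{K_G}^{\rho}$, and let $\tau$ be the induced map $E'\mapsto\rho(E')$ on the set of Hermitian structures. This is exactly where Assumption \ref{an} enters: since $\rho^2(z)=zc$ with $c\in Z_{\mathbb R}\cap K_G$, right translation by $c$ preserves every $K_G$--reduction, so $\tau^2=\mathrm{id}$ and $\tau$ is a genuine involution whose fixed points are precisely the $\rho$--invariant Hermitian structures.

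Next I would show that $\tau$ preserves the Einstein--Hermitian property. The Chern connection $\nabla'$ of $E_{K_G}^{\rho}$ is the image of $\nabla$ under $\rho$, so its curvature is obtained from $\mathcal K(\nabla)$ by pulling back along the anti-holomorphic $\sigma_X$ and applying the conjugate-linear involution $\widetilde\rho$ of \eqref{rp2}. A direct computation --- combining the conjugate-linearity of $d\sigma_G$ (see \eqref{dsg}), the anti-holomorphicity of $\sigma_X$, and the identity $\sigma_X^*\omega=-\omega$ --- shows that $\Lambda\mathcal K(\nabla')$ is again a central section, so $E_{K_G}^{\rho}$ is Einstein--Hermitian. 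By uniqueness of the Einstein--Hermitian connection on the polystable bundle $E_G$, it follows that $\nabla'=\nabla$; that is, $E_{K_G}$ and $E_{K_G}^{\rho}$ share the Chern connection $\nabla$.

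Finally I would average. Applying Lemma \ref{lem-c} (to an associated vector bundle coming from a faithful representation $G\hookrightarrow\mathrm{GL}(V)$ with $K_G$ landing in the unitary group), the self-adjoint section $A$ of $E_{K_G}\times^{K_G}\sqrt{-1}\,{\mathfrak k}_G$ relating the two reductions $E_{K_G}^{\rho}=E_{K_G}\cdot\exp(A)$ is flat, $\nabla A=0$. The reductions $E_{K_G}\cdot\exp(tA)$, $t\in[0,1]$, then trace the unique geodesic of the fibrewise non-positively curved symmetric space $G/K_G$ joining $E_{K_G}$ to $E_{K_G}^{\rho}$, and each has Chern connection $\nabla$ (again by Lemma \ref{lem-c}), hence is Einstein--Hermitian. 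Since $\tau$ is an involution induced by the bundle map $\rho$, it sends this geodesic to a geodesic while interchanging its two endpoints, so by uniqueness of geodesics it must reverse it, $\tau(E_{K_G}\cdot\exp(tA))=E_{K_G}\cdot\exp((1-t)A)$; the midpoint $E_{K_G}\cdot\exp(\tfrac12 A)$ is thus fixed by $\tau$, i.e.\ $\rho$--invariant, and remains Einstein--Hermitian. This is the required reduction. I expect the main obstacle to be the curvature bookkeeping showing that $\tau$ preserves the Einstein--Hermitian equation (so that uniqueness forces $\nabla'=\nabla$) and, relatedly, the verification that $\tau$ reverses the connecting geodesic; everything else is formal once the flatness of $A$ is established.
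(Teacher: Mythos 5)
Your proposal is correct and follows essentially the same route as the paper's own proof: both use uniqueness of the Einstein--Hermitian connection to show that $\rho$ preserves $\nabla$, invoke Lemma \ref{lem-c} via a faithful representation to get flatness of the relating section, and then take the midpoint of the unique geodesic in the fibrewise symmetric space $G/K_G$, which is fixed because the induced involution is an isometry interchanging the endpoints. Your explicit curvature check that the transported reduction is again Einstein--Hermitian is a point the paper passes over more quickly, but it is the same argument.
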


\begin{proof}
Let $E_{K_G}\, \subset\, E_G$ be a $C^\infty$ reduction of structure group
of the holomorphic principal $G$--bundle $E_G$ to the subgroup $K_G$. Since
$\sigma_G(K_G)\,=\,K_G$, from \eqref{r-m} it follows immediately that
$\rho(E_{K_G})\,\subset \, E_G$ is also a $C^\infty$ reduction of structure group
to $K_G$. Let $\nabla'$ be a connection on the principal $G$--bundle
$E_G$; it is a $\mathfrak g$--valued one-form on the total space of $E_G$.
Then $(d\sigma_G)\circ\rho^*\nabla'$ is also a connection on $E_G$, where
$d\sigma_G$ is the homomorphism in \eqref{dsg} (recall that $\rho$
is a self-map of the total space of $E_G$). If $\nabla'$ is the Chern
connection for the Hermitian structure $E_{K_G}\, \subset\, E_G$, then it is
straightforward to check that $(d\sigma_G)\circ\rho^*\nabla'$ is the Chern
connection for the Hermitian structure $\rho(E_{K_G})\,\subset \, E_G$.

The principal $G$--bundle $E_G$ admits an Einstein--Hermitian structure,
and the Einstein--Hermitian connection on $E_G$ is unique
(see Corollary \ref{c-c}). Let $\nabla$ denote the Einstein--Hermitian connection
on $E_G$. Since the Einstein--Hermitian connection $\nabla$ is unique,
it follows that $\nabla$ is preserved by $\rho$, meaning $(d\sigma_G)\circ
\rho^*\nabla\,=\, \nabla$. However, the Hermitian structure on $E_G$ giving the
Einstein--Hermitian connection is not unique in general.

Let
$$
E_{K_G}\, \subset\, E_G
$$
be an Hermitian structure on $E_G$ giving the Einstein--Hermitian connection
$\nabla$. Define
$$
E'_{K_G}\,=\, \rho(E_{K_G})\, \subset\, E_G\, .
$$
We noted above that $E'_{K_G}$ is also a $C^\infty$ reduction of structure
group of $E_G$ to $K_G$. Recall from above that the Chern connection on $E_G$
for this Hermitian structure $E'_{K_G}$ coincides with one given by
$\nabla$ using $\rho$. Since $\nabla$ is preserved by $\rho$, the
Chern connection on $E_G$ for $E'_{K_G}$ coincides with $\nabla$.

Let $\mathcal M$ denote the space of all Hermitian structures on $E_G$ that
give the Einstein--Hermitian connection $\nabla$. We note that every Hermitian
structure in $\mathcal M$ is Einstein--Hermitian. If $E_G$ is regularly stable
(meaning $E_G$ is stable and $\text{Aut}(E_G)\,=\, Z_G$),
then ${\mathcal M}\,=\,Z_G/(K_G\cap Z_G)$. Let
\begin{equation}\label{cM}
\rho_{\mathcal M}\, :\, {\mathcal M}\, \longrightarrow\, {\mathcal M}
\end{equation}
be the map defined by $E_{K_G}\, \longmapsto\, \rho(E_{K_G})$ (constructed as above).
Since the element $c$ in Definition \ref{def1} lies in $K_G$
(see Assumption \ref{an}), we conclude that
$\rho_{\mathcal M}$ is an involution. The proposition is equivalent to the
statement that $\rho_{\mathcal M}$ has a fixed point.

Fix a reduction
$$
E^0_{K_G}\, \subset\, E_G
$$
lying in $\mathcal M$. Fix an inner product $h_{\mathfrak g}$ on $\mathfrak g$ which
is invariant under the adjoint action of $K_G$; since $K_G$ is compact, such
an inner product exists. Using the reduction $E^0_{K_G}$, this
$h_{\mathfrak g}$ produces an Hermitian structure on the adjoint vector bundle
$\text{ad}(E_G)$. To see this, note that $\text{ad}(E_G)$ is identified with the
vector bundle $E^0_{K_G}\times^{K_G} \mathfrak g$
associated to $E^0_{K_G}$ for the adjoint action of $K_G$ on $\mathfrak g$.
Therefore, $h_{\mathfrak g}$ induces an Hermitian structure on
$E^0_{K_G}\times^{K_G} \mathfrak g$. So $\text{ad}(E_G)$ gets an Hermitian
structure using its identification with $E^0_{K_G}\times^{K_G} \mathfrak g$. This
Hermitian structure on $\text{ad}(E_G)$ will be denoted by $h_{{\rm ad}(E_G)}$.

Let
\begin{equation}\label{S}
{\mathcal S}\,:=\, \text{ad}(E^0_{K_G})^\perp\, \subset\, \text{ad}(E_G)
\end{equation}
be the orthogonal complement of $\text{ad}(E^0_{K_G})$ with respect to the
Hermitian structure $h_{{\rm ad}(E_G)}$. This orthogonal complement is
in fact independent
of the choice of $h_{\mathfrak g}$. Given any Hermitian structure
$$
E_{K_G}\, \subset\, E_G
$$
on $E_G$, there is a unique $C^\infty$ section $s\, \in\, C^\infty(X,\,
{\mathcal S})$ such that
$$
E_{K_G}\,=\, \exp(s)(E^0_{K_G})
$$
(recall that $\text{ad}(E_G)$
is the Lie algebra bundle associated to $\text{Ad}(E_G)$). Conversely, for any
$$
s\, \in\, C^\infty(X,\, {\mathcal S})\, ,
$$
the image $\exp(s)(E^0_{K_G})\, \subset\, E_G$ is an Hermitian structure on $E_G$.

Let
$$
s_0\, \in\, C^\infty(X,\, {\mathcal S})
$$
be the section such that $\exp(s_0)(E^0_{K_G})\,=\, \rho_{\mathcal M}(E^0_{K_G})$,
where $\rho_{\mathcal M}$ is constructed in \eqref{cM}.

Let $\nabla^{\rm ad}$ be the connection on the vector bundle $\text{ad}(E_G)$
induced by the Einstein--Hermitian connection $\nabla$. From
Lemma \ref{lem-c} it can be deduced that $s_0$ is covariant constant
(flat) with respect to $\nabla^{\rm ad}$. To prove this, take any faithful
holomorphic representation $G\, \hookrightarrow\, \text{GL}(W)$. Fix a maximal
compact subgroup of $\text{GL}(W)$ containing $K_G$. Consider the two
Hermitian structures on the associated vector bundle $E_G\times^G W$ given
by $E^0_{K_G}$ and $\rho_{\mathcal M}(E^0_{K_G})$. Since their Chern
connections coincide, using Lemma \ref{lem-c} we deduce that $s_0$ is flat
with respect to $\nabla^{\rm ad}$.

We will prove that that the Hermitian structure $\exp(s_0/2)(E^0_{K_G})$
on $E_G$ is fixed by $\rho_{\mathcal M}$.

To prove that $\exp(s_0/2)(E^0_{K_G})$ lies in $\mathcal M$, note that
$s_0/2$ is flat with respect to $\nabla^{\rm ad}$ because $s_0$ is so. Therefore, using
Lemma \ref{lem-c} we conclude that the Chern connection for the Hermitian
structure $\exp(s_0/2)(E^0_{K_G})$ coincides with $\nabla$ (as before, take a
faithful holomorphic representation $G$ and apply Lemma \ref{lem-c} to the
associated vector bundle). Therefore,
$$
\exp(s_0/2)(E^0_{K_G})\, \in\, \mathcal M\, .
$$

Take any point $x\, \in\, X$. Fix a point
$$
z_0\, \in\, (E^0_{K_G})_x\, .
$$
Identify $(E^0_{K_G})_x$ and $(E_G)_x$ with $K_G$ and $G$ respectively by
sending any element $z_0g$ to $g$. The space of all reductions of the structure
group of the principal $G$--bundle $(E_G)_x\,
\longrightarrow\, \{x\}$ to the subgroup $K_G$ is identified with $(E_G)_x/K_G$. Hence
using the above identification of $(E_G)_x$ with $G$, this space of reductions
coincides with $G/K_G$.

Let $g_0\, \in\, G$ be the unique element such that
\begin{equation}\label{g0}
\exp(s_0)(x)(z_0)\,=\, z_0g_0\, .
\end{equation}
For the element $g_0K_G\,\in\, G/K_G$,
$$
g_0K_G\,=\, (\rho_{\mathcal M}(E^0_{K_G}))_x\,=\, \rho((E^0_{K_G})_{\sigma_X(x)})
\, \subset\, (E_G)_x
$$
using the above identification between $G/K_G$ and the space of all reductions of
the principal $G$--bundle $(E_G)_x\,\longrightarrow\, \{x\}$ to the subgroup $K_G$.

We note that using $z_0$, the fiber $\text{ad}(E_G)$ is identified with the 
Lie algebra $\mathfrak g$. This identification sends any $v\, \in\, \mathfrak g$
to the equivalence class of $(z_0\, ,v)$ (recall that the total space of
$\text{ad}(E_G)$ is a quotient of $E_G\times \mathfrak g$).
Let
$$
v_0\, \in \, {\mathfrak g}
$$
be the element given by $s_0(x)\,\in\, \text{ad}(E_G)_x$ using this
identification. From \eqref{g0} we have
\begin{equation}\label{g-v}
\exp (v_0)\,=\, g_0\, .
\end{equation}

Next we show that any reduction $E'_{K_G}\, \subset\, E_G$ lying in
$\mathcal M$ is uniquely determined by its restriction $(E'_{K_G})_x\,
\subset\, (E_G)_x$. To prove this, recall that
the Chern connection on $E_G$ for $E'_{K_G}$ coincides with $\nabla$. Hence we
can reconstruct $E'_{K_G}$ from $(E'_{K_G})_x$ by taking parallel translations
of $(E'_{K_G})_x\, \subset\, (E_G)_x$ using $\nabla$. Hence $E'_{K_G}$ is
uniquely determined by $(E'_{K_G})_x$.

Let
$$
{\mathcal M}^x\, \subset\, G/K_G
$$
be the image of the map ${\mathcal M}\, \longrightarrow\, G/K_G$
that sends any $E'_{K_G}\, \subset\, E_G$ in ${\mathcal M}$ to
the reduction $(E'_{K_G})_x\, \subset\, (E_G)_x$ (recall that the
space of all reductions of the principal $G$--bundle $(E_G)_x\,
\longrightarrow\,\{x\}$ to the subgroup $K_G$ is identified with $G/K_G$).
Since any reduction $E'_{K_G}\, \subset\, E_G$ lying in
$\mathcal M$ is uniquely determined by its restriction $(E'_{K_G})_x\,
\subset\, (E_G)_x$, the map $\rho_{\mathcal M}$ in \eqref{cM} produces a map
\begin{equation}\label{wrm}
\widetilde{\rho}^x_{\mathcal M}\, :\, {\mathcal M}^x\, \longrightarrow\,
{\mathcal M}^x\, .
\end{equation}

Using \eqref{r-m} it follows that $\widetilde{\rho}^x_{\mathcal M}$ is the
restriction of the map
\begin{equation}\label{fg0}
f_{g_0}\, :\, G/K_G\, \longrightarrow\, G/K_G\, ,~\ gK_G\, \longmapsto\,
g_0\sigma_G(g)K_G\, ,
\end{equation}
where $g_0$ is the element of $G$ in \eqref{g0}.

The direct sum of the Killing form on $[{\mathfrak g}\, ,
{\mathfrak g}]$ and an inner product
on the center of $\mathfrak g$ is a nondegenerate $G$--invariant form
on $\mathfrak g$. This form produces a Riemannian metric on $G/K_G$. The
map $f_{g_0}$ in \eqref{fg0} is an isometry with respect to this
Riemannian metric. Given any two points of
$G/K_G$, there is a unique geodesic passing through them.

The map $\widetilde{\rho}^x_{\mathcal M}$ in \eqref{wrm} interchanges the two points
$(E^0_{K_G})_x$ and $(\rho_{\mathcal M}(E^0_{K_G}))_x$ of ${\mathcal M}^x$.
Since $\widetilde{\rho}^x_{\mathcal M}$ is the restriction of the isometry
$f_{g_0}$, the mid-point of the unique geodesic between the two points
$(E^0_{K_G})_x$ and $(\rho_{\mathcal M}(E^0_{K_G}))_x$ is fixed by
$\widetilde{\rho}^x_{\mathcal M}$, provided this mid-point lies in 
${\mathcal M}^x$.

The earlier identification between $G/K_G$ and the space of all reductions
of the principal $G$--bundle $(E_G)_x\, \longrightarrow\, \{x\}$ to $K_G$
(given by $z_0$) sends the reduction $(E^0_{K_G})_x$ (respectively,
$(\rho_{\mathcal M}(E^0_{K_G}))_x$) to $eK_G$ (respectively, $g_0K_G$).
The mid-point of the unique geodesic in $G/K_G$ between $eK_G$ and $g_0K_G$ is
$\exp(v_0/2)K_G$ (see \eqref{g-v}). Therefore, the mid-point of the unique
geodesic between the two points $(E^0_{K_G})_x$ and $(\rho_{\mathcal M}
(E^0_{K_G}))_x$ is $(\exp(s_0/2)(E^0_{K_G}))_x$.

We have shown above that $\exp(s_0/2)(E^0_{K_G})$ lies in $\mathcal M$. Consequently,
for every point $x\, \in\, X$, the reduction
$$
(\exp(s_0/2)(E^0_{K_G}))_x\, \subset\, (E_G)_x
$$
coincides with $(\rho_{\mathcal M}(\exp(s_0/2)(E^0_{K_G})))_x\, \subset\,
(E_G)_x$. Therefore, the Hermitian structure $\exp(s_0/2)(E^0_{K_G})$
on $E_G$ is fixed by $\rho_{\mathcal M}$.
\end{proof}

Lemma \ref{lem2} and Proposition \ref{prop-i} together give the following:

\begin{corollary}\label{cor1}
Let $(E_G\, ,\rho)$ be a polystable pseudo-real principal $G$--bundle.
Then $E_G$ admits an Einstein--Hermitian structure $E_{K_G}\, \subset\, E_G$
such that $\rho(E_{K_G})\,=\, E_{K_G}$.
\end{corollary}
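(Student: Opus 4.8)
The plan is simply to chain the two results established immediately above. First I would invoke Lemma \ref{lem2}: since $(E_G\, ,\rho)$ is a polystable pseudo-real principal $G$--bundle, the underlying holomorphic principal $G$--bundle $E_G$ is polystable in the ordinary sense. This is precisely the hypothesis required to run Proposition \ref{prop-i}.

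Next I would apply Proposition \ref{prop-i} directly to the pair $(E_G\, ,\rho)$. Because $E_G$ is now known to be polystable, that proposition produces an Einstein--Hermitian reduction $E_{K_G}\, \subset\, E_G$ satisfying $\rho(E_{K_G})\,=\, E_{K_G}$, which is exactly the conclusion sought. There is nothing further to check: the corollary is the conjunction of the two statements, with Lemma \ref{lem2} supplying verbatim the input that Proposition \ref{prop-i} consumes.

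The genuine content of the argument has already been carried out inside Proposition \ref{prop-i}, so for the corollary itself I do not expect any remaining obstacle. It is worth recalling where the difficulty sat: uniqueness of the Einstein--Hermitian connection forces that connection to be $\rho$--invariant, but the delicate point is to produce a \emph{reduction}---not merely a connection---fixed by the involution $\rho_{\mathcal M}$ on the space $\mathcal M$ of Einstein--Hermitian structures. That is handled by the geodesic--midpoint construction on the symmetric space $G/K_G$ of nonpositive curvature, where $\rho_{\mathcal M}$ restricts fibrewise to an isometry and Assumption \ref{an} (that $c\, \in\, K_G$) guarantees $\rho_{\mathcal M}$ is an involution with a fixed point. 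The only role left for the corollary is to confirm that the polystability hypothesis of Proposition \ref{prop-i} is met, and Lemma \ref{lem2} does this.
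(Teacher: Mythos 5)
Your proposal is correct and matches the paper exactly: the corollary is stated there with the one-line justification ``Lemma \ref{lem2} and Proposition \ref{prop-i} together give the following,'' which is precisely the chaining you describe. Nothing further is needed.
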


\begin{proposition}\label{prop3}
Let $(E_G\, ,\rho)$ be a pseudo-real principal $G$--bundle admitting
an Einstein--Hermitian structure $E_{K_G}\, \subset\, E_G$
such that $\rho(E_{K_G})\,=\, E_{K_G}$. Then $(E_G\, ,\rho)$ is polystable.
\end{proposition}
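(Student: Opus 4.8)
The plan is to deduce the full pseudo-real polystability of Definition \ref{def4} from the $\rho$--invariant Einstein--Hermitian reduction, by combining the known decomposition theory for Einstein--Hermitian bundles with the observation that every relevant subobject can be made $\widetilde{\rho}$--invariant, precisely because the Hermitian metric coming from $E_{K_G}$ is $\widetilde{\rho}$--invariant.

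First I would dispose of semistability. The existence of the Einstein--Hermitian structure $E_{K_G}\,\subset\, E_G$ forces $E_G$ to be polystable by the Donaldson--Uhlenbeck--Yau theorem recalled before Corollary \ref{c-c} (see \cite{Do}, \cite{UY}, \cite{AB}, \cite{RS}); in particular $E_G$ is semistable, so $(E_G\, ,\rho)$ is semistable by Lemma \ref{lem1}. Let $\nabla$ be the Chern connection of $E_{K_G}$, which is the Einstein--Hermitian connection, and let $\nabla^{\rm ad}$ be the induced connection on ${\rm ad}(E_G)$. Choosing the inner product $h_{\mathfrak g}$ on $\mathfrak g$ to be invariant under the compact group $\widetilde{K}\,=\, K_G\rtimes({\mathbb Z}/2{\mathbb Z})$ (possible since $\widetilde{K}$ is compact), the induced Hermitian metric $h_{{\rm ad}(E_G)}$ on ${\rm ad}(E_G)$ is preserved, in the conjugate--linear sense, by the involution $\widetilde{\rho}$ of \eqref{rp2}; this uses $\rho(E_{K_G})\,=\, E_{K_G}$ together with $\sigma_G(K_G)\,=\, K_G$. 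Moreover, since ${\rm ad}$ annihilates the center of $\mathfrak g$, the Einstein--Hermitian condition $\Lambda{\mathcal K}(\nabla)\,=\,\lambda$ gives $\Lambda{\mathcal K}(\nabla^{\rm ad})\,=\, {\rm ad}(\lambda)\,=\,0$, so ${\rm ad}(E_G)$ is Einstein--Hermitian with factor zero, hence polystable of degree zero.

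Next, assume $(E_G\, ,\rho)$ is not stable; I must produce the data of Definition \ref{def4}. Among all proper parabolic subalgebra bundles ${\mathfrak p}\,\subsetneq\, {\rm ad}(E_G)$ over a large open set with $\widetilde{\rho}({\mathfrak p})\,=\, {\mathfrak p}$ and ${\rm degree}(\iota_{U*}{\mathfrak p})\,=\,0$ --- at least one exists by the failure of stability, and semistability forces the degree to equal zero rather than be positive --- I would select one whose Levi quotient ${\mathfrak p}/R_u({\mathfrak p})$ has minimal rank. The analytic heart of the argument is the standard fact that in the Einstein--Hermitian bundle ${\rm ad}(E_G)$ a saturated coherent subsheaf of degree zero, i.e.\ of maximal slope, is a holomorphic subbundle that is parallel for $\nabla^{\rm ad}$ and whose orthogonal complement for $h_{{\rm ad}(E_G)}$ is again holomorphic. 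Applying this to ${\mathfrak p}$ and to its nilpotent radical $R_u({\mathfrak p})$ (semistability forces $R_u({\mathfrak p})$ and $\ell({\mathfrak p})$ to have degree zero as well), I would define the Levi subalgebra bundle $\ell({\mathfrak p})$ as the orthogonal complement of $R_u({\mathfrak p})$ inside ${\mathfrak p}$. Because $h_{{\rm ad}(E_G)}$ is $\widetilde{\rho}$--invariant and $\widetilde{\rho}$ preserves both ${\mathfrak p}$ and $R_u({\mathfrak p})$, it preserves this orthogonal complement as well, so $\widetilde{\rho}(\ell({\mathfrak p}))\,=\,\ell({\mathfrak p})$; thus condition (1) of Definition \ref{def4} holds.

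Finally I would verify that $\ell({\mathfrak p})$ is stable in the sense of Definition \ref{d-l-s}. It is semistable, being a degree--zero subbundle of the semistable degree--zero bundle ${\rm ad}(E_G)$. If it were not stable, there would be a $\widetilde{\rho}$--invariant proper parabolic subalgebra bundle ${\mathfrak q}\,\subsetneq\,\ell({\mathfrak p})$ with ${\rm degree}(\iota_{U*}{\mathfrak q})\,=\,0$; then ${\mathfrak q}\oplus R_u({\mathfrak p})$, the preimage of ${\mathfrak q}$ under ${\mathfrak p}\,\to\,{\mathfrak p}/R_u({\mathfrak p})\,=\,\ell({\mathfrak p})$, is a proper $\widetilde{\rho}$--invariant parabolic subalgebra bundle of ${\rm ad}(E_G)$ of degree zero whose Levi quotient has strictly smaller rank than that of ${\mathfrak p}$, contradicting the minimality in the choice of ${\mathfrak p}$. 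Hence $\ell({\mathfrak p})$ is stable and $(E_G\, ,\rho)$ is polystable. The step I expect to be the main obstacle is the analytic one: showing rigorously that the $\widetilde{\rho}$--invariant degree--zero parabolic subsheaf is a $\nabla^{\rm ad}$--parallel subbundle whose metric orthogonal complement is holomorphic, so that the orthogonal--complement construction of $\ell({\mathfrak p})$ is legitimate and manifestly $\widetilde{\rho}$--invariant. This rests on the vanishing of the second fundamental form for maximal--slope subsheaves of an Einstein--Hermitian bundle, together with tracking $\widetilde{\rho}$ through the orthogonal projection, which is exactly where the hypothesis $\rho(E_{K_G})\,=\, E_{K_G}$ enters.
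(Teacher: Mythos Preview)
Your argument is correct and follows essentially the same route as the paper's proof: deduce semistability from the Einstein--Hermitian property of ${\rm ad}(E_G)$, choose a minimal $\widetilde{\rho}$--invariant degree--zero parabolic subalgebra bundle ${\mathfrak p}$, use the Einstein--Hermitian structure to produce a $\widetilde{\rho}$--invariant Levi subbundle, and conclude stability of the Levi by minimality via ${\mathfrak q}\oplus R_u({\mathfrak p})$. The only cosmetic difference is in how the Levi is cut out: the paper takes ${\mathcal E}\,=\,({\mathfrak p}\cap {\rm ad}(E_{K_G}))\otimes_{\mathbb R}{\mathbb C}$, the complexification of the intersection with the compact real form, whereas you take the $h_{{\rm ad}(E_G)}$--orthogonal complement of $R_u({\mathfrak p})$ inside ${\mathfrak p}$; for a $K_G$--invariant inner product these two descriptions give the same Levi factor fibrewise, and both inherit $\widetilde{\rho}$--invariance from $\rho(E_{K_G})\,=\,E_{K_G}$ for the same reason.
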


\begin{proof}
As before, $\nabla^{\rm ad}$ is the connection on $\text{ad}(E_G)$ induced by the
Einstein--Hermitian connection on $E_G$. This connection $\nabla^{\rm ad}$
is clearly Einstein--Hermitian. Therefore, $\text{ad}(E_G)$ is
polystable, in particular, it is semistable.
Hence the pseudo-real principal $G$--bundle $(E_G\, ,\rho)$ is semistable
(see Lemma \ref{lem1}). If $(E_G\, ,\rho)$ is stable, then $(E_G\, ,\rho)$ is
polystable. Therefore, assume that $(E_G\, ,\rho)$ is not stable.

Take a pair $(U\, ,{\mathfrak p})$ as in Definition \ref{def3}
such that
$$
{\rm degree}(\iota_{U*}{\mathfrak p})\,=\, 0\, .
$$
Since $\text{ad}(E_G)$ is polystable of degree zero, the subbundle
${\mathfrak p}$ of $\text{ad}(E_G)\vert_U$ extends to a subbundle of
$\text{ad}(E_G)$ over $X$. To see this write, $\text{ad}(E_G)$ as a direct
sum of stable vector bundles. The statement is clear for a stable vector bundle;
the statement for polystable case follows from this. 
This extended vector bundle will be denoted by
${\mathfrak p}'$. Clearly, ${\mathfrak p}'$ is a parabolic
subalgebra bundle of $\text{ad}(E_G)$. We also have $\widetilde{\rho}({\mathfrak p}')
\,=\, {\mathfrak p}'$, because $\widetilde{\rho}({\mathfrak p})
\,=\, {\mathfrak p}$. Furthermore,
$$
{\rm degree}({\mathfrak p}')\,=\, {\rm degree}(\iota_{U*}{\mathfrak p})\,=\, 0\, .
$$

Let
$$
{\mathfrak p}\, \subset\, \text{ad}(E_G)
$$
be a smallest parabolic subalgebra bundle over $X$ such that
\begin{itemize}
\item $\widetilde{\rho}({\mathfrak p})\, =\, {\mathfrak p}$, and

\item ${\rm degree}({\mathfrak p})\,=\, 0$.
\end{itemize}
It should be clarified that ${\mathfrak p}$ need not be unique.

We will show that the connection $\nabla^{\rm ad}$ on $\text{ad}(E_G)$
preserves the subbundle ${\mathfrak p}$.

The vector bundle $\text{ad}(E_G)$ is polystable of degree zero.
Since ${\rm degree}({\mathfrak p})\,=\, 0$, there is a holomorphic subbundle
$W\, \subset\, \text{ad}(E_G)$ such that the natural homomorphism
$$
{\mathfrak p}\oplus W\, \longrightarrow\, \text{ad}(E_G)
$$
is an isomorphism. Hence both ${\mathfrak p}$ and $W$ are of polystable
of degree zero. Therefore, from the uniqueness of the Einstein--Hermitian connection
it follows that the Einstein--Hermitian connection $\nabla^{\rm ad}$ is the
direct sum of the Einstein--Hermitian connections on ${\mathfrak p}$ and $W$.
In particular, the connection $\nabla^{\rm ad}$ preserves the subbundle ${\mathfrak p}$.

The adjoint vector bundle $\text{ad}(E_{K_G})$ is a totally real subbundle of
$\text{ad}(E_G)$, meaning $\text{ad}(E_{K_G})\bigcap
\sqrt{-1}\cdot\text{ad}(E_{K_G})\,=\, 0$. Since
both the subbundles ${\mathfrak p}$ and
$\text{ad}(E_{K_G})$ are preserved by $\nabla^{\rm ad}$, it follows that
${\mathfrak p}\bigcap \text{ad}(E_{K_G}$ is
a real subbundle of $\text{ad}(E_G)$ preserved by $\nabla^{\rm ad}$.
Consider the complexified vector bundle
$$
{\mathcal E}\, :=\, ({\mathfrak p}\cap \text{ad}(E_{K_G}))\otimes_{\mathbb R}
\mathbb C\, .
$$
Since $\text{ad}(E_{K_G})$ is a totally real subbundle, this $\mathcal E$
is a complex subbundle of $\text{ad}(E_{K_G})$; it is clearly preserved by
$\nabla^{\rm ad}$. In particular, ${\mathcal E}$ is a holomorphic subbundle
of $\mathfrak p$. This holomorphic subbundle ${\mathcal E}\, \subset\, \mathfrak p$
is a Levi subalgebra bundle of $\mathfrak p$.

The given condition that $\rho(E_{K_G})\,=\, E_{K_G}$ implies that
$\widetilde{\rho}(\text{ad}(E_{K_G}))\,=\, \text{ad}(E_{K_G})$. Since we also
have $\widetilde{\rho}({\mathfrak p})\, =\, {\mathfrak p}$, it follows
immediately that
$$
\widetilde{\rho}({\mathcal E})\,=\, {\mathcal E}\, .
$$

{}From the minimality assumption on ${\mathfrak p}$ it can be deduced that
the Levi subalgebra bundle ${\mathcal E}$ is stable. To see this, assume that
${\mathfrak q}\, \subset\, {\mathcal E}\vert_U$ is a parabolic subalgebra bundle
violating the stability of the Levi subalgebra bundle ${\mathcal E}$. Then the
direct sum ${\mathfrak q}\oplus R_n({\mathfrak p})$, where $R_n({\mathfrak p})\,\subset
\, {\mathfrak p}\vert_U$ is the nilpotent radical, is property contained in
${\mathfrak p}$, and it contradicts the minimality assumption on ${\mathfrak p}$.
Hence we conclude that the Levi subalgebra bundle $\mathcal E$ is stable. Consequently,
$(E_G\, ,\rho)$ is polystable.
\end{proof}

Proposition \ref{prop-i} and Proposition \ref{prop3} together give the following:

\begin{corollary}\label{cor3}
If $(E_G\, ,\rho)$ is a pseudo-real principal $G$--bundle such that
the holomorphic principal $G$--bundle $E_G$ is polystable. Then
$(E_G\, ,\rho)$ is polystable.
\end{corollary}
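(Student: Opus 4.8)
The plan is to chain together the two preceding propositions, using a $\rho$--compatible Einstein--Hermitian reduction as the bridge between the polystability of the underlying holomorphic bundle $E_G$ and the polystability of the pseudo-real bundle $(E_G\, ,\rho)$. The statement is essentially an assembly of results already in place, so the whole argument reduces to verifying that the output of the first proposition is exactly the hypothesis of the second.

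First I would invoke Proposition \ref{prop-i}. By hypothesis the holomorphic principal $G$--bundle $E_G$ is polystable, and this is precisely the assumption of that proposition; it therefore yields an Einstein--Hermitian structure $E_{K_G}\, \subset\, E_G$ enjoying the compatibility $\rho(E_{K_G})\,=\, E_{K_G}$. Second, I would feed this $\rho$--invariant Einstein--Hermitian reduction into Proposition \ref{prop3}: since $(E_G\, ,\rho)$ now admits an Einstein--Hermitian structure $E_{K_G}$ with $\rho(E_{K_G})\,=\, E_{K_G}$, that proposition delivers exactly the desired conclusion that $(E_G\, ,\rho)$ is polystable.

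All the genuine difficulty has already been absorbed into the two propositions, so no new obstacle arises at this stage. In Proposition \ref{prop-i} the hard point is the production of a $\rho_{\mathcal M}$--fixed point among the Hermitian structures inducing the fixed Einstein--Hermitian connection $\nabla$, carried out by the geodesic midpoint construction in $G/K_G$ together with the flatness of $s_0$ and the role of Assumption \ref{an}; in Proposition \ref{prop3} the hard point is the extraction of a stable, $\widetilde{\rho}$--invariant Levi subalgebra bundle from a minimal degree--zero parabolic subalgebra bundle. The only thing left to confirm in the corollary itself is that the reduction supplied by the first proposition is literally the input demanded by the second, which is immediate from comparing their statements.
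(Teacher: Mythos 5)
Your proposal is correct and coincides exactly with the paper's own argument: the corollary is stated there as the immediate combination of Proposition \ref{prop-i} (producing a $\rho$--invariant Einstein--Hermitian reduction from polystability of $E_G$) and Proposition \ref{prop3} (deducing polystability of $(E_G\, ,\rho)$ from such a reduction). Nothing further is needed.
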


\section{Representations of the extended fundamental group in a compact
subgroup}\label{sec4}

Fix a point $x_0\, \in\,X$ such that $\sigma_X(x_0)\,\not=\, x_0$.
Let
$$
\Gamma(x_0)\, =\, \Gamma(X,x_0)
$$
be the homotopy classes of paths $\gamma\, :\, [0\, ,1]\, \longrightarrow\, X$
such that $\gamma(0)\,=\, x_0$ and $\gamma(1)\,\in\, \{x_0\, ,\sigma_X(x_0)\}$.
Take two paths $\gamma_1\, ,\gamma_2\, \in\, \Gamma(x_0)$. If $\gamma_2(1)\, =\,
x_0$, then define $\gamma_2\cdot\gamma_1\,=\, \gamma_1\circ\gamma_2$, where
``$\circ$'' denotes composition of paths. If $\gamma_2(1)\, =\, \sigma_X(x_0)$,
then define $\gamma_2\cdot\gamma_1\,=\, \sigma_X(\gamma_1)\circ\gamma_2$.
These operations make $\Gamma(x_0)$ into a group (see \cite{BHH}). The inverse
of $\gamma\,\in\,\Gamma(x_0)$ with $\gamma(1)\, =\, \sigma_X(x_0)$ is
represented by the path $t\, \longmapsto\, \sigma_X(\gamma(1-t))$. This
group $\Gamma(x_0)$ fits in a short exact sequence of groups
\begin{equation}\label{fg}
e\,\longrightarrow\, \pi_1(X,x_0)\,\longrightarrow\,\Gamma(x_0)\,
\stackrel{\eta}{\longrightarrow}\,
{\mathbb Z}/2{\mathbb Z}\,\longrightarrow\, e\, ,
\end{equation}
where $\eta(\gamma)\,=\, 0$ if $\gamma(1)\,=\, x_0$, and
$\eta(\gamma)\,=\, 1$ if $\gamma(1)\,=\, \sigma_X(x_0)$. If there is a
point $y\, \in\, Y$ such that $\sigma_X(y)\,=\, y$, then \eqref{fg} is
a right-split (the exact sequence is isomorphic to a semi-direct product).
To see this, fix a path $\gamma_0$ from $x_0$ to $y$. Then the composition
$\gamma_1\, :=\, \sigma_X(\gamma_0)^{-1}\circ\gamma_0\, \in\, \eta^{-1}(1)$
is of order two. So $1\, \longmapsto\, \gamma_1$ is a right-splitting
of \eqref{fg}.

Let $K_G$ be the maximal compact subgroup of $G$ defined earlier (see \eqref{dkg}).
The group $\widetilde K$ in \eqref{dkg} is identified with the semi-direct product
$K_G\rtimes ({\mathbb Z}/2{\mathbb Z})$ for the involution $\sigma_G$ of $K_G$.
In particular, the set $\widetilde K$ is identified with the set $K_G\times
\{0\, ,1\}$.

Let $\text{Map}'(\Gamma(x_0)\, , {\widetilde K})$ be the space of all maps
$$
\delta\, :\, \Gamma(x_0)\,\longrightarrow\, {\widetilde K}
$$
such that the following diagram is commutative:
\begin{equation}\label{di}
\begin{matrix}
e & \longrightarrow & \pi_1(X,x_0)&\longrightarrow &\Gamma(x_0) &
\stackrel{\eta}{\longrightarrow} &
{\mathbb Z}/2{\mathbb Z} &\longrightarrow & e\\
&& \Big\downarrow && ~\Big\downarrow\delta && \Vert\\
e & \longrightarrow & K_G &\longrightarrow &{\widetilde K} &
\stackrel{\eta'}{\longrightarrow} &
{\mathbb Z}/2{\mathbb Z} &\longrightarrow & e
\end{matrix}
\end{equation}

We write ${\mathbb Z}/2{\mathbb Z}\,=\, \{0\, ,1\}$.
For any $c\, \in\, Z_{\mathbb R}\bigcap K_G$, let
$\text{Hom}_c(\Gamma(x_0)\, , {\widetilde K})$ be the space of all maps
$$
\delta\, \in\, \text{Map}'(\Gamma(x_0)\, , {\widetilde K})
$$
such that
\begin{itemize}
\item the restriction of $\delta$ to $\pi_1(X,x_0)$ is a homomorphism of groups,

\item $\delta(g'g)\,=\, c\delta(g')\delta(g)$, if $\eta(g)\, =\,1\,=\,
\eta(g')$ (the homomorphism $\eta$ is defined in \eqref{fg}), and

\item $\delta(g'g)\,=\, \delta(g')\delta(g)$ otherwise (meaning
if $\eta(g)\cdot \eta(g')\, =\,0$).
\end{itemize}

We note that if $c\,=\, e$, then $\text{Hom}_c(\Gamma(x_0)\, , {\widetilde K})$
is the space of all homomorphisms from $\Gamma(x_0)$ to ${\widetilde K}$
satisfying \eqref{di}.

Take any $\delta\, \in\, \text{Hom}_c(\Gamma(x_0)\, , {\widetilde K})$. We will
construct from $\delta$ a polystable pseudo-real principal $G$--bundle on $X$.

Consider the restriction $\delta'\, :=\, \delta\vert_{\pi_1(X,x_0)}$ (see
\eqref{fg}). It is a homomorphism from $\pi_1(X,x_0)$ to $K_G$.
Therefore, $\delta'$ gives
\begin{itemize}
\item a principal $K_G$--bundle $E_{K_G}$ equipped with a flat
connection $\nabla^K$, and

\item a base point $z_0\, \in\, (E_{K_G})_{x_0}$ over the base point $x_0$.
\end{itemize}

Let $E_G\, :=\, E_{K_G}\times^{K_G} G\, \longrightarrow\, X$ be the principal
$G$--bundle obtained by extending the structure group of $E_{K_G}$ using the
inclusion of $K_G$ in $G$. The flat connection $\nabla^K$ defines a holomorphic
structure on $E_G$. This holomorphic principal $G$--bundle $E_G$ is polystable
because $\nabla^K$ is a flat Hermitian connection.

We will construct a diffeomorphism
\begin{equation}\label{rt}
\rho_{\sigma_X(x_0)}\, :\, (E_G)_{\sigma_X(x_0)}\, \longrightarrow\, (E_G)_{x_0}
\end{equation}
between the fibers of $E_G$. For that, take any $\gamma\, \in\, \Gamma(x_0)$ such
that $\eta(\gamma)\,=\, 1$ (see \eqref{fg} for $\eta$). Let $g_\gamma\, \in\, K_G$
be the element such that the canonical identification of the set $\widetilde K$
with $K_G\rtimes \{0\, ,1\}$ takes $\delta(\gamma)$ to $(g_\gamma\, ,1)$. Let
$$
z'_0\, \in\, (E_G)_{\sigma_X(x_0)}
$$
be the element obtained by the parallel translation of the base point $z_0$
along $\gamma$ for the connection $\nabla^K$.
The map $\rho_{\sigma_X(x_0)}$ in \eqref{rt} is defined as follows:
$$
\rho_{\sigma_X(x_0)}(z'_0 g)\,=\, z_0\sigma_G(g^{-1}_\gamma g) \, \in\,
(E_G)_{x_0}\, , ~\ g\, \in\, G\, .
$$

\begin{lemma}\label{le1}
The map $\rho_{\sigma_X(x_0)}$ defined above is independent of the choice of
$\gamma$.
\end{lemma}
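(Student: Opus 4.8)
The plan is to fix two paths $\gamma_1,\gamma_2\in\Gamma(x_0)$ with $\eta(\gamma_1)=\eta(\gamma_2)=1$, let $\rho_1,\rho_2$ denote the two candidate maps $(E_G)_{\sigma_X(x_0)}\to (E_G)_{x_0}$ built from them, and prove $\rho_1=\rho_2$. Write $\delta(\gamma_i)=(g_i,1)$ under the identification $\widetilde K=K_G\rtimes\{0,1\}$, and let $z_i'\in (E_G)_{\sigma_X(x_0)}$ be the $\nabla^K$--parallel transport of $z_0$ along $\gamma_i$; since $z_0\in(E_{K_G})_{x_0}$ and $\nabla^K$ is the flat $K_G$--connection, both $z_i'$ lie in $(E_{K_G})_{\sigma_X(x_0)}$, so there is a unique $k\in K_G$ with $z_2'=z_1'k$. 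Writing an arbitrary point of $(E_G)_{\sigma_X(x_0)}$ as $z_2'g=z_1'(kg)$ and unwinding the two defining formulas $\rho_i(z_i'h)=z_0\sigma_G(g_i^{-1}h)$, I would reduce the desired equality, using that $\sigma_G$ is a homomorphism, to the single group--theoretic identity $k=g_1g_2^{-1}$: indeed $\rho_1(z_2'g)=z_0\sigma_G(g_1^{-1}kg)$ and $\rho_2(z_2'g)=z_0\sigma_G(g_2^{-1}g)$ agree for all $g$ precisely when $g_1^{-1}k=g_2^{-1}$.

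Next I would identify the relevant loop. Put $\lambda:=\gamma_1\cdot\gamma_2^{-1}$; since $\eta$ is a homomorphism, $\eta(\lambda)=0$, so $\lambda\in\pi_1(X,x_0)$. Using the group law on $\Gamma(x_0)$ together with the description of the inverse, $\gamma_1\cdot\gamma_2^{-1}=\sigma_X(\gamma_2^{-1})\circ\gamma_1$, and $\sigma_X(\gamma_2^{-1})$ is exactly the reversal of $\gamma_2$; thus $\lambda$ is represented by first traversing $\gamma_1$ from $x_0$ to $\sigma_X(x_0)$ and then the reverse of $\gamma_2$ back to $x_0$. Parallel transport of $z_0$ around $\lambda$ then equals $z_0k^{-1}$: transport along $\gamma_1$ sends $z_0\mapsto z_1'=z_2'k^{-1}$, transport along the reverse of $\gamma_2$ sends $z_2'\mapsto z_0$, and by $G$--equivariance $z_2'k^{-1}\mapsto z_0k^{-1}$. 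Since the monodromy of $\nabla^K$ based at $z_0$ is governed by $\delta':=\delta|_{\pi_1(X,x_0)}$, this identifies $k$ with $\delta'(\lambda)$, the sign of the convention being pinned down by the way $(E_{K_G},\nabla^K,z_0)$ was constructed from $\delta'$.

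It then remains to compute $\delta(\lambda)$ from the cocycle conditions. Because $\eta(\gamma_1)=\eta(\gamma_2^{-1})=1$, the twisted relation gives $\delta(\lambda)=c\,\delta(\gamma_1)\,\delta(\gamma_2^{-1})$; applying the same relation to $\gamma_2\cdot\gamma_2^{-1}$ and using that $\delta|_{\pi_1(X,x_0)}$ is a homomorphism (so $\delta$ of the identity is the identity) yields $\delta(\gamma_2^{-1})=\delta(\gamma_2)^{-1}c^{-1}$. Since $c\in Z_{\mathbb R}\cap K_G$ is central and $\sigma_G$--fixed, the factors $c$ and $c^{-1}$ cancel, leaving $\delta(\lambda)=\delta(\gamma_1)\delta(\gamma_2)^{-1}$. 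Multiplying out in $K_G\rtimes\{0,1\}$, with $(g_2,1)^{-1}=(\sigma_G(g_2)^{-1},1)$ and $\sigma_G^2=\mathrm{id}$, gives $\delta(\lambda)=(g_1g_2^{-1},0)$, that is $\delta'(\lambda)=g_1g_2^{-1}$. Combining with $k=\delta'(\lambda)$ from the previous step yields $k=g_1g_2^{-1}$, which is exactly the identity needed, so $\rho_1=\rho_2$.

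The main obstacle is bookkeeping rather than depth: one must fix a single holonomy convention so that $k=\delta'(\lambda)$ (and not its inverse) is consistent with the construction of $E_{K_G}$ from $\delta'$, and carefully track the $\sigma_X$ that enters the group law of $\Gamma(x_0)$ when forming $\gamma_2^{-1}$ and the product $\gamma_1\cdot\gamma_2^{-1}$. The pleasant structural point, which makes the computation work, is that the central $\sigma_G$--invariant element $c$ drops out of $\delta(\lambda)$ precisely because $\lambda\in\pi_1(X,x_0)$, where $\delta'$ is an honest homomorphism; the $c$--twisting only survives on genuinely ``odd'' products.
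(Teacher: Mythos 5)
Your argument is correct and is essentially the paper's proof in a slightly different packaging: the paper writes the second path as $\gamma_1\gamma$ with $\gamma_1\in\pi_1(X,x_0)$, checks that the two candidate maps agree at the single point $z'_0\delta(\gamma_1)^{-1}$, and invokes $G$--equivariance, whereas you extract the difference loop $\lambda=\gamma_1\cdot\gamma_2^{-1}$ and verify the equivalent monodromy identity $k=\delta'(\lambda)=g_1g_2^{-1}$. The one point you rightly flag --- the holonomy sign convention relating parallel transport to $\delta'$ --- is fixed implicitly in the paper (transport of $z_0$ along $\gamma_1\in\pi_1$ yields $z_0\delta(\gamma_1)^{-1}$), and under that convention your computation goes through.
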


\begin{proof}
Take an element $\gamma_1\, \in\, \pi_1(X,x_0)$, and replace $\gamma$ by
the element $\gamma_1\gamma\, \in\, \Gamma(x_0)$
represented by the path $\gamma\circ\gamma_1$.
Let $g_{\gamma_1\gamma}$ be the element of $K_G$
such that $$\delta(\gamma_1\gamma)\,=\, (g_{\gamma_1\gamma}\, ,1)\, .$$
Then $g_{\gamma_1\gamma}\,=\,\delta(\gamma_1)g_\gamma$.
The element $z'_0$ gets replaced by $z'_0\delta(\gamma_1)^{-1}$. Therefore,
the map $\rho_{\sigma_X(x_0)}$ constructed as above using $\gamma_1\gamma$
in place of $\gamma$ sends the point $z'_0\delta(\gamma_1)^{-1}$ to
$z_0\sigma_G(g_\gamma^{-1})\sigma_G(\delta(\gamma_1))^{-1}$.

Consequently, the two maps $\rho_{\sigma_X(x_0)}$ constructed using
$\gamma$ and $\gamma_1\gamma$ respectively
coincide on the point $z'_0\delta(\gamma_1)^{-1}$. On the
other hand, both these maps satisfy the condition that
\begin{equation}\label{id}
\rho_{\sigma_X(x_0)}(yh)\,=\, \rho_{\sigma_X(x_0)}(h)\sigma_G(h)
\end{equation}
for all $y\, \in\, (E_G)_{\sigma_X(x_0)}$ and $h\, \in\, G$. These together
imply the two maps coincide on the entire $(E_G)_{\sigma_X(x_0)}$. Therefore,
the map $\rho_{\sigma_X(x_0)}$ is independent of the choice of $\gamma$.
\end{proof}

The map $\rho_{\sigma_X(x_0)}$ is clearly anti-holomorphic.

We will now show that $\rho_{\sigma_X(x_0)}$ is independent of the
base point $z_0$.

Take any $g_0\,\in\, K_G$. Define
$$
\widetilde{\delta}\, :\, \Gamma(x_0)\, \longrightarrow\, \widetilde{K}\, ,~\,~\, ~
z\, \longmapsto\, g^{-1}_0\delta(z) g_0
$$
(recall that $K_G$ is a subgroup of $\widetilde{K}$). Note that $\widetilde{\delta}
\,\in\, \text{Hom}_c(\Gamma(x_0)\, , {\widetilde K})$. If we replace $\delta$ by
$\widetilde{\delta}$, then the flat principal $E_K$--bundle $(E_K\, ,\nabla^K)$
remains unchanged, but the base point $z_0$ gets replaced by $z_0g_0$.

\begin{lemma}\label{le3}
The map $\rho_{\sigma_X(x_0)}$ in \eqref{rt} for $\delta$ coincides with the
corresponding map for $\widetilde{\delta}$. In other words, $\rho_{\sigma_X(x_0)}$
does not change if $\delta$ is conjugated by an element of $K_G$.
\end{lemma}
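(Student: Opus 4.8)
The plan is to unwind the two constructions of $\rho_{\sigma_X(x_0)}$ and compare them directly; the only real content is the interaction between conjugation in the semi-direct product $\widetilde{K}$ and the involution $\sigma_G$.

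First I would record how conjugation by $g_0\in K_G$ affects the data entering the construction \eqref{rt}. Fix $\gamma\in\Gamma(x_0)$ with $\eta(\gamma)=1$ and write $\delta(\gamma)=(g_\gamma\, ,1)$ under the identification of $\widetilde K$ with $K_G\rtimes\{0\, ,1\}$. Using the multiplication rule $(a,s)(b,t)=(a\,\sigma_G^s(b)\, ,s+t)$ of the semi-direct product, one computes
\[
\widetilde{\delta}(\gamma)\,=\, g_0^{-1}\delta(\gamma)g_0\,=\,(g_0^{-1}g_\gamma\,\sigma_G(g_0)\, ,1)\, ,
\]
so the element attached to $\gamma$ by $\widetilde\delta$ is $\widetilde{g}_\gamma=g_0^{-1}g_\gamma\,\sigma_G(g_0)$. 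Since the flat bundle $(E_K\, ,\nabla^K)$ is unchanged and parallel transport commutes with the right $K_G$--action, the new base point $z_0g_0$ transports along $\gamma$ to $z'_0g_0$.

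Next I would substitute these into the defining formula. For $g\in G$, the map built from $\widetilde\delta$ sends $z'_0g_0\cdot g$ to
\[
(z_0g_0)\,\sigma_G(\widetilde{g}_\gamma^{-1}g)\,=\,z_0g_0\,\sigma_G\!\big(\sigma_G(g_0)^{-1}g_\gamma^{-1}g_0\,g\big)\, .
\]
Because $\sigma_G$ has order two, we have $\sigma_G(\sigma_G(g_0)^{-1})=g_0^{-1}$, so the right-hand side equals $z_0g_0\cdot g_0^{-1}\sigma_G(g_\gamma^{-1})\sigma_G(g_0)\sigma_G(g)=z_0\,\sigma_G\!\big(g_\gamma^{-1}g_0\,g\big)$. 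On the other hand, the original map applied to the same point $z'_0(g_0g)$ returns $z_0\,\sigma_G(g_\gamma^{-1}(g_0g))$ by its own definition. These agree, and since every point of $(E_G)_{\sigma_X(x_0)}$ is of the form $z'_0g_0\cdot g$, the two maps coincide on the entire fiber.

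I expect no genuine obstacle here; the only delicate point is the bookkeeping of the semi-direct product conjugation together with the repeated use of $\sigma_G^2=\mathrm{id}$, which is precisely where an inverse- or order-of-multiplication slip could hide. Alternatively, one can shorten the verification by noting that both maps satisfy the equivariance relation \eqref{id}, so that it suffices to check equality at the single point $z'_0$.
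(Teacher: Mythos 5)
Your proposal is correct and follows essentially the same route as the paper: you identify that conjugation by $g_0$ replaces the base points $z_0\, ,z'_0$ by $z_0g_0\, ,z'_0g_0$ and replaces $g_\gamma$ by $g_0^{-1}g_\gamma\sigma_G(g_0)$, and then you verify that the two maps agree. The only cosmetic difference is that the paper checks agreement at the single point $z'_0g_0$ and invokes the equivariance \eqref{id}, which is exactly the shortcut you mention at the end, whereas you carry out the computation on the whole fiber.
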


\begin{proof}
Take the element $\gamma\, \in\, \Gamma(x_0)$ in the construction of the map in
\eqref{rt}. Replace $\delta$ by $\widetilde{\delta}$. Then
$z_0$ gets replaced by $z_0g_0$, and hence $z'_0$ gets
replaced by $z'_0g_0$. The element $g_\gamma$ gets replaced
by $g^{-1}_0g_\gamma\sigma_G(g_0)$. Therefore, the two maps constructed
as in \eqref{rt} for $\widetilde{\delta}$ and $\delta$ respectively
coincide at the point $z'_0g_0$. Now from \eqref{id} we conclude that
the two maps coincide on entire $(E_G)_{x_0}$.
\end{proof}

Take a point $x_1\, \in\, X$. If $\sigma_X(x_1)\, \not=\, x_1$, then
define $\Gamma(x_1)\,=\, \Gamma(X,\, x_1)$ as before by replacing $x_0$
with $x_1$. If $\sigma_X(x_1)\, =\, x_1$, then define $\Gamma(x_1)$
to be the semi-direct product
$$
\Gamma(x_1)\, :=\, \pi_1(X,x_1)\rtimes ({\mathbb Z}/2{\mathbb Z})
$$
constructed using the involution of $\pi_1(X,x_1)$ given by $\sigma_X$.

Fix a path $\gamma_0$ in $X$ from $x_1$ to $x_0$.
Then we have an isomorphism $\pi_1(X,x_0)\, \longrightarrow\,
\pi_1(X,x_1)$ defined by $\gamma\, \longmapsto\, \gamma^{-1}_0\circ\gamma
\circ\gamma_0$ (as before, ``$\circ$'' is composition of paths).
This isomorphism extends to an isomorphism $\Gamma(x_0)\, \longrightarrow\,
\Gamma(x_1)$ by sending any $\gamma\, \in\, \eta^{-1}(1)$ to
$\sigma_X(\gamma^{-1}_0)\circ\gamma\circ \gamma_0$. The inverse of this
isomorphism $\Gamma(x_0)\, \longrightarrow\,
\Gamma(x_1)$ produces a bijection
$$
\beta\, :\, \text{Hom}_c(\Gamma(x_0)\, , {\widetilde K})\, \longrightarrow\,
\text{Hom}_c(\Gamma(x_1)\, , {\widetilde K})
$$
by composition of maps.
The flat principal $K$--bundle corresponding to any
$$
\delta\, \in \,\text{Hom}_c(\Gamma(x_0)\, , {\widetilde K})
$$
is identified with the flat principal $K$--bundle corresponding to
$\beta(\delta)$; the base point in the bundle changes by parallel
translation along $\gamma_0$.

{}From Lemma \ref{le3} it can be deduced that the isomorphism
$$
\rho_{\sigma_X(x_1)}\, :\, (E_G)_{\sigma_X(x_1)}\, \longrightarrow\, (E_G)_{x_1}
$$
constructed as in \eqref{rt} for $\beta(\delta)$ is independent of the choice
of $\gamma_0$. Indeed, for two choices of $\gamma_0$, the corresponding
isomorphisms $\Gamma(x_0)\, \longrightarrow\, \Gamma(x_1)$ differ by an inner
automorphism of $\Gamma(x_0)$ given by an element of $\pi_1(X,x_0)$.
Therefore, for two choices
of $\gamma_0$, the corresponding bijections from $\text{Hom}_c(\Gamma(x_0)\, ,
{\widetilde K})$ to $\text{Hom}_c(\Gamma(x_1)\, , {\widetilde K})$
differ by an inner automorphism of ${\widetilde K}$ by an element of $K_G$. By
Lemma \ref{le3}, an inner automorphism of ${\widetilde K}$ by an element of $K_G$
does not affect the map in \eqref{rt}.

Therefore, we get a map
$$
\rho_X\, :\, E_G\, \longrightarrow\, E_G
$$
by running the base point $x_1$ over entire $X$. From the construction
of $\rho_X$ it follows immediately that
\begin{itemize}
\item $\rho_X(zg)\, =\, \rho_X(z)\sigma_G(g)$ for all $z\, \in\, E_G$ and
$g\, \in\, G$, and 

\item $\rho_X$ is anti-holomorphic.
\end{itemize}

Let
$$
\rho\, :\, E_G\, \longrightarrow\, \sigma^*_X \overline{E}_G
$$
be the map given by $\rho_X$ and the natural identification of the total
spaces of $E_G$ and $\sigma^*_X \overline{E}_G$. From the above two properties
of $\rho_X$ it follows immediately that $\rho$ is a holomorphic isomorphism
of principal $G$--bundles.

\begin{proposition}\label{prop4}
The pair $(E_G\, ,\rho)$ constructed above from $\delta\, \in\,{\rm Hom}_c
(\Gamma(x_0), \widetilde{K})$ is a pseudo-real principal $G$--bundle such that
the corresponding element in $Z_{\mathbb R}$ (see Definition \ref{def1}) is $c$.
\end{proposition}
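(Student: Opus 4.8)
The plan is to isolate what actually remains to be checked. The paragraphs preceding the statement already establish that $\rho_X$ is anti-holomorphic and satisfies $\rho_X(zg)\,=\,\rho_X(z)\sigma_G(g)$ (cf. \eqref{r-m}), and that the associated $\rho\,:\,E_G\,\longrightarrow\,\sigma^*_X\overline{E}_G$ is therefore a holomorphic isomorphism of principal $G$--bundles. Consequently the only clause of Definition \ref{def1} left to verify is the last one: that the self-map $\rho_X^2$ of the total space equals $z\,\longmapsto\, zc$, and moreover that the central element so produced is exactly the element $c\,\in\, Z_{\mathbb R}\cap K_G$ used to define ${\rm Hom}_c(\Gamma(x_0)\, ,\widetilde K)$.

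First I would record a structural reduction. Since $\sigma_X^2\,=\,\mathrm{id}_X$, the composite $\rho_X^2$ covers the identity of $X$; and from $\rho_X(zg)\,=\,\rho_X(z)\sigma_G(g)$ together with $\sigma_G^2\,=\,\mathrm{id}$ one gets $\rho_X^2(zg)\,=\,\rho_X^2(z)g$, so $\rho_X^2$ is a holomorphic $G$--bundle automorphism over the identity, i.e. a section of ${\rm Ad}(E_G)$. Because $\rho_X$ is assembled from $\nabla^K$--parallel transport and fixed elements of $K_G$, it intertwines the flat connection $\nabla^K$ with itself, so $\rho_X^2$ is covariant constant for $\nabla^K$; such an automorphism is determined by its restriction to the single fibre over $x_0$. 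As $c$ is central, right translation $R_c$ is likewise a covariant constant automorphism, so it will suffice to prove the fibrewise identity $\rho_X^2(z_0)\,=\,z_0c$ at the base point, and this forces $\rho_X^2\,=\,R_c$ on all of $E_G$.

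The heart of the proof is then the evaluation at $z_0$. I would fix $\gamma\,\in\,\Gamma(x_0)$ with $\eta(\gamma)\,=\,1$ and $\delta(\gamma)\,=\,(g_\gamma,1)$, and compute $\rho_X^2(z_0)$ by two applications of the explicit fibre description \eqref{rt}: the second application is the map $\rho_{\sigma_X(x_0)}$ as written, while the first is its analogue at the base point $\sigma_X(x_0)$, obtained through $\beta$ and the path $\sigma_X(\gamma)$. Both applications are governed by the relation \eqref{id}, namely $\rho(yh)\,=\,\rho(y)\sigma_G(h)$. Transporting $z_0$ through the two parallel translations, the composite becomes right translation by a word in $g_\gamma$, $\sigma_G(g_\gamma)$ and the $\nabla^K$--holonomy $\delta'(\gamma\cdot\gamma)$ around the loop $\gamma\cdot\gamma\,\in\,\pi_1(X,x_0)$; here I use the group law of $\Gamma(x_0)$, which gives $\gamma\cdot\gamma\,=\,\sigma_X(\gamma)\circ\gamma\,\in\,\pi_1(X,x_0)$ because $\gamma(1)\,=\,\sigma_X(x_0)$. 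The decisive input is the twisted clause defining ${\rm Hom}_c$: since $\eta(\gamma)\,=\,1\,=\,\eta(\gamma)$ we have $\delta(\gamma\cdot\gamma)\,=\,c\,\delta(\gamma)\delta(\gamma)$, and the multiplication in $\widetilde K\,=\,K_G\rtimes(\mathbb Z/2\mathbb Z)$ gives $\delta(\gamma)\delta(\gamma)\,=\,(g_\gamma\sigma_G(g_\gamma),0)$, hence $\delta'(\gamma\cdot\gamma)\,=\,c\,g_\gamma\sigma_G(g_\gamma)$. Substituting this and using $\sigma_G(c)\,=\,c$ and $\sigma_G^2\,=\,\mathrm{id}$, the factors $g_\gamma$ and $\sigma_G(g_\gamma)$ cancel and one is left with precisely $\rho_X^2(z_0)\,=\,z_0c$. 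As $c\,\in\, Z_{\mathbb R}\cap K_G\,\subset\, Z_{\mathbb R}$, this exhibits $(E_G\, ,\rho)$ as pseudo-real with associated central element $c$.

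I expect the main obstacle to be purely the bookkeeping of conventions that must all be made mutually consistent: the direction of parallel transport, the precise path-composition rule in $\Gamma(x_0)$, the identification $\beta$ arising from moving the base point from $x_0$ to $\sigma_X(x_0)$, and the order of multiplication in $\widetilde K$. These choices determine whether the spurious $g_\gamma$--factors cancel cleanly, and a sign slip in any of them turns $z_0c$ into a different element. By contrast the entire conceptual content is concentrated in the single twisted identity $\delta(\gamma\cdot\gamma)\,=\,c\,\delta(\gamma)\delta(\gamma)$, which is exactly the feature distinguishing ${\rm Hom}_c(\Gamma(x_0)\, ,\widetilde K)$ from a genuine homomorphism and which is what produces the central element $c$ in place of $e$.
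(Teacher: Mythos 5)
Your proposal is correct and follows essentially the same route as the paper: reduce to a single fibre, evaluate the composite $\rho_{x_0}\circ\rho_{\sigma_X(x_0)}$ at the transported base point, and let the twisted identity $\delta(\gamma\gamma)\,=\,c\,\delta(\gamma)^2\,=\,g_\gamma\sigma_G(g_\gamma)c$ cancel the $g_\gamma$--factors to leave multiplication by $c$, extended to the whole fibre by $G$--equivariance. Your explicit justification of the one-fibre reduction via covariant constancy of $\rho_X^2$ is a slightly more detailed version of the paper's implicit ``it suffices to check at $x_0$,'' but the content is the same.
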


\begin{proof}
To prove the proposition it suffices to show that the composition
$$
(E_G)_{\sigma_X(x_0)}\, \stackrel{\rho_{\sigma_X(x_0)}}{\longrightarrow}\,
(E_G)_{x_0}\, \stackrel{\rho_{x_0}}{\longrightarrow}\, (E_G)_{\sigma_X(x_0)}
$$
is multiplication by $c$.

Fix a path $\gamma$ in $X$ from $x_0$ to $\sigma_X(x_0)$. So $\gamma\, \in\,
\eta^{-1}(1)\, \subset\, \Gamma(x_0)$. As before, $z_0$ is the base point in
$(E_G)_{x_0}$. Let $z'_0\, \in\, (E_G)_{\sigma_X(x_0)}$ be the point obtained
by the parallel translation of $z_0$ along $\gamma$. We will identify
$\Gamma(x_0)$ with $\Gamma(x_1)$ using the reverse path
$\gamma'\, :\, [0\, ,1]\, \longrightarrow\, X$ from $\sigma_X(x_0)$ from $x_0$
defined by $\gamma'(t)\,=\, \gamma (1-t)$. Let
$$
\delta'\, \in\, \text{Hom}_c(\Gamma (x_1), \widetilde{K})
$$
be the element given by $\delta$ using this isomorphism of $\Gamma(x_0)$ with
$\Gamma(x_1)$. The base point in $(E_G)_{\sigma_X(x_0)}$ for $\delta'$ is
$z'_0$.

We will use the path $\gamma$ to construct $\rho_{\sigma_X(x_0)}$, and
we will use
the path $\sigma_X(\gamma)$ to construct $\rho_{x_0}$. Although these maps are
independent of the choice of path (see Lemma \ref{le1}), we need to fix paths
for explicit computations.

As before, $g_\gamma\, \in\, K_G$ is such that the canonical identification
of $\Gamma(x_0)$ with $K_G\rtimes \{0\, ,1\}$ takes $\delta(\gamma)$ to $(g_\gamma\, ,1)$.

We have
\begin{equation}\label{z1}
\rho_{\sigma_X(x_0)}(z'_0)\,=\, z_0\sigma_G(g_\gamma)^{-1}\, .
\end{equation}
The parallel translation along the path $\sigma_X(\gamma)$ takes $z'_0$ to
$z_0\delta(\gamma\gamma)^{-1}$ (the element $\gamma\gamma\, \in\, \pi_1(X,x_0)$
is given by the composition $\sigma_X(\gamma)\circ\gamma$). Therefore,
$$
\rho_{x_0}(z_0\delta(\gamma\gamma)^{-1})\,=\, z'_0\sigma_G(g^{-1}_\gamma)\, ;
$$
this uses the fact that the above isomorphism between $\Gamma(x_0)$ and
$\Gamma(x_1)$
takes $\gamma\, \in\,\eta^{-1}(1)\, \subset\, \Gamma(x_0)$ to the homotopy class
of $\sigma_X(\gamma)$. Therefore, substituting $\sigma_X(x_0)$ in place of $x_0$ in the
identity \eqref{id}, we get
\begin{equation}\label{z2}
\rho_{x_0}(z_0\sigma_G(g_\gamma)^{-1})\,=\, z'_0\sigma_G(g^{-1}_\gamma)
\sigma_G(\delta(\gamma\gamma)\sigma_G(g_\gamma)^{-1})\, .
\end{equation}

But $\delta(\gamma\gamma)\,=\, \delta(\gamma)^2 c\,=\, g_\gamma\sigma_G(g_\gamma)c$.
Hence
$$
\sigma_G(g^{-1}_\gamma)
\sigma_G(\delta(\gamma\gamma)\sigma_G(g_\gamma)^{-1})
\,=\, \sigma_G(g^{-1}_\gamma)\sigma_G(g_\gamma)g_\gamma (g_\gamma)^{-1} c
\,=\, c\, .
$$
Therefore, from \eqref{z2} we have
$$
\rho_{x_0}(z_0\sigma_G(g_\gamma)^{-1})\,=\,z'_0 c\, .
$$
Combining this with \eqref{z1}, we conclude that
\begin{equation}\label{z3}
\rho_{x_0}\circ \rho_{\sigma_X(x_0)}(z'_0)\, =\, z'_0c\, .
\end{equation}

{}From \eqref{id} it follows that $\rho_{x_0}\circ \rho_{\sigma_X(x_0)}$ commutes
the action of $G$ on $(E_G)_{\sigma_X(x_0)}$. Therefore, from \eqref{z3} we conclude
that $\rho_{x_0}\circ\rho_{\sigma_X(x_0)}$ coincides with
multiplication by $c$.
\end{proof}

We noted earlier that the holomorphic principal $G$--bundle $E_G$ is polystable.
Therefore, from Corollary \ref{cor3} it follows that the pseudo-real principal
$G$--bundle $(E_G\, ,\rho)$ is polystable.

Since $E_G$ admits a flat connection, it follows that all the rational characteristic
classes of $E_G$ of positive degree vanish.

We will now describe a reverse construction.

Let $(E_G\, ,\rho)$ be a polystable pseudo-real principal $G$--bundle
such that the corresponding element
in $Z_{\mathbb R}$ (see Definition \ref{def1}) is $c\, \in\,
Z_{\mathbb R}\bigcap K_G$.

Assume that the following two conditions hold:
\begin{itemize}
\item the second Chern class of ${\rm ad}(E_G)$ satisfies the condition
$$
\int_X c_2({\rm ad}(E_G))\wedge\omega^{\dim_{\mathbb C}(X) -2}\,=\, 0\, ,
$$
and

\item for any character $\chi$ of $G$, the line bundle over $X$ associated to
$E_G$ for $\chi$ is of degree zero.
\end{itemize}
These two numerical conditions together imply
that the Einstein--Hermitian connection on $E_G$ is flat
\cite[p. 115, Lemma 4.12]{Ko}; in \cite{Ko}, this is proved for vector bundles,
but it extends to principal $G$--bundles by taking vector bundles associated
to irreducible representations of $G$. Therefore, these
numerical conditions imply that
all the rational characteristic classes of $E_G$ of positive degree vanish.

The Einstein--Hermitian connection on $E_G$ will be denoted by $\nabla$. Let
$$
E_{K_G}\, \subset\, E_G
$$
be an Hermitian structure that gives $\nabla$ and satisfies the condition
$\rho(E_{K_G})\,=\,E_{K_G}$ (it exists by Proposition \ref{prop-i}).

Fix a base point $z_0\,\in\, (E_{K_G})_{x_0}$.
Take any $\gamma\, \in\, \pi_1(X, x_0)$. Let $z_\gamma\, \in\, (E_{K_G})_{x_0}$ be
the point obtained by the parallel translation of $z_0$ along $\gamma$
for the connection $\nabla$. Let
$$
g_\gamma\, \in\, K_G
$$
be the unique element such that $z_0g^{-1}_\gamma\,=\, z_\gamma$.

Now take any $\gamma\, \in\, \eta^{-1}(1)\, \subset\, \Gamma(x_0)$.
Let $y_\gamma\, \in\, (E_{K_G})_{\sigma_X(x_0)}$ be 
the point obtained by the parallel translation of $z_0$ along $\gamma$
for the connection $\nabla$. Let
$$
h'_\gamma\, \in\, K_G
$$
be the unique element such that $z_0\,=\, \rho(y_\gamma)\sigma_G(h'_\gamma)$.
Using the canonical set-theoretic identification of $(\eta')^{-1}(1)$ with $G$
(see \eqref{di} for $\eta'$), the
element $h'_\gamma$ gives an element $h_\gamma\, \in\, (\eta')^{-1}(1)$. Let
\begin{equation}\label{de}
{\delta}\, :\, \Gamma(x_0)\, \longrightarrow\, \widetilde{K}
\end{equation}
be the map that sends any $\gamma\, \in\, \eta^{-1}(0)$ to
$g_\gamma$ constructed above and sends any $\gamma\, \in\, \eta^{-1}(1)$
to $h_\gamma$.

\begin{proposition}\label{prop5}
The function ${\delta}\, :\, \Gamma(x_0)\, \longrightarrow\, \widetilde{K}$
in \eqref{de} lies in ${\rm Hom}_c(\Gamma(x_0)\, , {\widetilde K})$.
\end{proposition}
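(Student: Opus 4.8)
The plan is to check the three conditions defining $\text{Hom}_c(\Gamma(x_0),\widetilde K)$ in turn, with every step resting on a single compatibility between $\rho$ and the flat connection $\nabla$. First I would note that $\delta$ lies in $\text{Map}'(\Gamma(x_0),\widetilde K)$: by construction it sends $\eta^{-1}(0)=\pi_1(X,x_0)$ into $K_G$ and $\eta^{-1}(1)$ into $(\eta')^{-1}(1)$, so the square \eqref{di} commutes. I would also record that $\delta$ is well defined on homotopy classes, since the two numerical hypotheses force $\nabla$ to be flat and hence make all the parallel translations in \eqref{de} depend only on the classes of the paths.

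The geometric backbone is the identity
\begin{equation*}
\rho\circ\tau_\gamma\,=\,\tau_{\sigma_X\circ\gamma}\circ\rho\,,
\end{equation*}
where $\tau_\gamma$ denotes parallel transport for $\nabla$ along a path $\gamma$. This follows from the relation $(d\sigma_G)\circ\rho^*\nabla=\nabla$ established in the proof of Proposition \ref{prop-i}: that relation says precisely that $\rho$ carries $\nabla$-horizontal lifts over $\gamma$ to $\nabla$-horizontal lifts over $\sigma_X\circ\gamma$, twisted by $\sigma_G$ on the structure group. I would prove this intertwining first, as each subsequent verification is a manipulation of parallel transports through it, combined with the relation $\rho(zg)=\rho(z)\sigma_G(g)$.

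Granting this, the homomorphism property of $\delta|_{\pi_1(X,x_0)}$ is the standard multiplicativity of the holonomy of a flat connection, which also disposes of the third condition when $\eta(g)=\eta(g')=0$. When exactly one of $g,g'$ has $\eta=1$, the product $g'g$ again lies in $\eta^{-1}(1)$, so both sides of $\delta(g'g)=\delta(g')\delta(g)$ involve the map $\rho$ exactly once; tracking the base point $z_0$ through the group law on $\Gamma(x_0)$ and through the intertwining identity, and unwinding the definition of $h'_\gamma$ via $z_0=\rho(y_\gamma)\sigma_G(h'_\gamma)$, then yields the desired equality with no central correction.

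The main obstacle is the case $\eta(g)=\eta(g')=1$, where $g'g\in\pi_1(X,x_0)$ and I must produce the factor $c$ in $\delta(g'g)=c\,\delta(g')\delta(g)$. The point is a parity count: the left-hand element $g_{g'g}$ is defined with no application of $\rho$, whereas each factor on the right carries one $\rho$ through the definition of $h'$, and since $\rho^2$ is multiplication by $c$, the two applications combine to leave exactly $c$. I would make this precise by fixing representative paths from $x_0$ to $\sigma_X(x_0)$, transporting $z_0$ around the resulting loop while applying $\rho$ twice, and reading off the elements of $K_G$, exactly as in the computation \eqref{z1}--\eqref{z3} in the proof of Proposition \ref{prop4}; the relation $\rho(zg)=\rho(z)\sigma_G(g)$ together with the anti-holomorphicity of $\rho$ ensures the $\sigma_G$-twists assemble correctly into $\widetilde K$, leaving $c$ as the sole discrepancy.
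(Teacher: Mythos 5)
Your proposal is correct and follows essentially the same route as the paper: a case-by-case verification of the three defining conditions via explicit parallel transports, resting on the equivariance $\rho(zg)=\rho(z)\sigma_G(g)$, on $\rho$ preserving $\nabla$ (which is exactly your intertwining identity $\rho\circ\tau_\gamma=\tau_{\sigma_X\circ\gamma}\circ\rho$, invoked inline in the paper), and on $\rho^2$ being multiplication by $c$ to produce the central factor in the case $\eta(g)=\eta(g')=1$. Your parity-count heuristic correctly identifies where $c$ enters; the only thing left to the explicit computation, as you acknowledge, is confirming that the discrepancy is $c$ itself (central, $\sigma_G$-fixed) rather than some twist of it, which the paper's chain \eqref{r}--\eqref{c}--\eqref{sl} settles.
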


\begin{proof}
Clearly, $\delta^{-1}(K_G)\,=\, \pi_1(X,x_0)$. In other words,
The diagram as in \eqref{di} is commutative. For any $\gamma\, ,
\gamma'\, \in\, \pi_1(X,x_0)$, it is easy to see that $\delta(\gamma\gamma')\,=\,
\delta(\gamma)\delta(\gamma')$.

Now take $\gamma\, \in\, \pi_1(X,x_0)$ and $\gamma'\, \in\, \eta^{-1}(1)$.
Let $g_{\gamma'}$ (respectively, $g_{\gamma\gamma'}$) be the element of $K_G$
given by $\delta(\gamma')$ (respectively, $\delta(\gamma\gamma')$) using the
set theoretic identification of $(\eta')^{-1}(1)$ with $K_G$ (see
\eqref{di} for $\eta'$). We need to show that
\begin{equation}\label{sm}
g_{\gamma\gamma'} \,=\, \delta(\gamma) g_{\gamma'}\, .
\end{equation}

Let $z'_0 \,\in\, (E_{K_G})_{\sigma_X(x_0)}$ be the parallel translation of $z_0$
along $\gamma'$. Therefore, the parallel translation of $z_0$ along $\gamma
\gamma'$ produces $z'_0\delta(\gamma)^{-1} \,\in\, (E_{K_G})_{\sigma_X(x_0)}$.
Hence,
$$
\rho(z'_0)\,=\,z_0\sigma_G(g^{-1}_{\gamma'}) ~\,~\, \text{ and }
 ~\,~\, \rho(z'_0\delta(\gamma)^{-1})\,=\,z_0\sigma_G(g^{-1}_{\gamma\gamma'})\, .
$$
Since $\rho(yg)\,=\, \rho(y)\sigma_G(g)$, we conclude that
$$
z_0\sigma_G(g^{-1}_{\gamma\gamma'})\,=\,
z_0\sigma_G(g^{-1}_{\gamma'})\sigma_G(\delta(\gamma)^{-1})
\,=\, z_0\sigma_G(g^{-1}_{\gamma'}\delta(\gamma)^{-1})
\, .
$$
Hence $g^{-1}_{\gamma\gamma'}\,=\, g^{-1}_{\gamma'}\delta(\gamma)^{-1}$.
This implies \eqref{sm}.

Hence $g_{\gamma\gamma'} \,=\, \delta(\gamma) g_{\gamma'}$. This coincides with
the corresponding identity in the definition of
${\rm Hom}_c(\Gamma(x_0)\, , {\widetilde K})$.

Now take $\gamma\, \in\, \eta^{-1}(1)$ and $\gamma'\, \in\, \pi_1(X,x_0)$.
Let $g_{\gamma}$ (respectively, $g_{\gamma\gamma'}$) be the element of $K_G$ given by
$\delta(\gamma)$ (respectively, $\delta(\gamma\gamma')$) using the set theoretic
identification of $(\eta')^{-1}(1)$ with $K_G$. We need to show that
\begin{equation}\label{sh2}
g_{\gamma\gamma'}\,=\, g_{\gamma}\sigma_G(\delta(\gamma'))\, .
\end{equation}

Let $z'_0 \,\in\, (E_{K_G})_{\sigma_X(x_0)}$ be the parallel translation of $z_0$
along $\gamma$.

We will compute the parallel translation along the path $\sigma_X(\gamma')
\circ \gamma$ which represents $\gamma\gamma'\,\in\, \Gamma(x_0)$.

Since $\rho$ preserves the connection $\nabla$, the image, under $\rho$, of the parallel
translation along $\gamma'$ is the parallel translation along the loop $\rho(\gamma')$.

Since $z_0$ is taken to $z_0\delta(\gamma')^{-1}$ by the parallel
translation along $\gamma'$, the parallel translation along $\rho(\gamma')$
takes $\rho(z_0)$ to $\rho(z_0)\sigma_G(\delta(\gamma')^{-1})$. We have
\begin{equation}\label{r}
\rho(z'_0)\,=\, z_0\sigma_G(g_\gamma)^{-1}\, .
\end{equation}
Hence $\rho(z_0\sigma_G(g_\gamma)^{-1})\,=\, \rho\circ (z'_0) \,=\,z'_0c$. So,
\begin{equation}\label{c}
\rho(z_0)\,=\, z'_0g_\gamma c\, .
\end{equation}
Since the parallel translation along $\rho(\gamma')$ takes 
$z'_0g_\gamma c$ to $$\rho(z_0)\sigma_G(\delta(\gamma')^{-1})\,=\,
z'_0g_\gamma \sigma_G(\rho(\gamma')^{-1})c\, ,$$ we conclude that
this parallel translation takes $z'_0$ to $z'_0g_\gamma \sigma_G(\delta
(\gamma')^{-1})g^{-1}_\gamma$.

Consequently, the parallel translation along $\rho(\gamma')\circ\gamma$ takes
$z_0$ to $z'_0g_\gamma \sigma_G(\delta(\gamma')^{-1})g^{-1}_\gamma$. Hence
$$
z_0\sigma_G (g_{\gamma\gamma'})^{-1}\,=\,
\rho(z'_0g_\gamma \sigma_G(\delta(\gamma')^{-1})g^{-1}_\gamma)\, .
$$
Therefore, from \eqref{r},
$$
\sigma_G (g_{\gamma\gamma'})^{-1}\,=\, \sigma_G(g_\gamma)^{-1}\sigma_G
(g_\gamma \sigma_G(\delta(\gamma')^{-1})g^{-1}_\gamma)\, .
$$
So we have
$$
\sigma_G (g_{\gamma\gamma'})^{-1}\,=\,
\sigma_G(\sigma_G(\delta(\gamma')^{-1})g^{-1}_\gamma)\, .
$$
This implies \eqref{sh2}.

Finally, take $\gamma\, ,\gamma'\, \in\, \eta^{-1}(1)$. Let $g_{\gamma}$
be as in the previous case. Let $g_{\gamma'}$ be the element of $K_G$
given by $\delta(\gamma')$ using the set theoretic identification of $(\eta')^{-1}(1)$
with $K_G$. We need to show that
\begin{equation}\label{sl}
\delta(\gamma\gamma')\,=\, g_\gamma\sigma_G(g_{\gamma'})c \, .
\end{equation}

Define $z'_0$ as before. From \eqref{r} it follows that 
$$
\rho(z'_0 g_\gamma g^{-1}_{\gamma'})\,=\, z_0\sigma_G(g^{-1}_{\gamma'})\, .
$$
Hence from the definition of $\delta(\gamma')$ we conclude that
$z'_0 g_\gamma g^{-1}_{\gamma'}$ is the parallel translation of
$z_0$ along $\gamma'$.

Therefore, the parallel translation along $\sigma_X(\gamma')$
takes $\rho(z_0)\,=\,z'_0g_\gamma c$ (see \eqref{c}) to
$\rho(z'_0g_\gamma g^{-1}_{\gamma'})\,=\, z_0\sigma_G(g^{-1}_{\gamma'})$
(see \eqref{r}). Hence the parallel translation along $\sigma_X(\gamma')$
takes $z'_0$ to $z_0\sigma_G(g^{-1}_{\gamma'})(g_\gamma)^{-1}c^{-1}$.
Consequently, the parallel translation along the loop
$\sigma_X(\gamma')\circ \gamma$, which represents $\gamma\gamma'
\,\in\, \Gamma(x_0)$, takes $z_0$
to $z_0\sigma_G(g^{-1}_{\gamma'})(g_\gamma)^{-1}c^{-1}$. Hence
$$
\delta(\gamma\gamma')^{-1}\,=\, 
\sigma_G(g^{-1}_{\gamma'})(g_\gamma)^{-1}c^{-1}\, .
$$
This implies \eqref{sl}.
\end{proof}

The above construction of an
element of $\text{Hom}_c(\Gamma(x_0)\, , {\widetilde K})$ from a
polystable pseudo-real principal $G$--bundle of vanishing characteristic classes
of positive degrees is clearly the reverse of the earlier construction of a
flat polystable pseudo-real principal $G$--bundle from an element of
$\text{Hom}_c(\Gamma(x_0)\, , {\widetilde K})$.

Two elements $\delta'\, ,\delta'\, \in\, \text{Hom}_c(\Gamma(x_0)\, , {\widetilde
K})$ are called \textit{equivalent} if there is an element $g\, \in\, K_G$ such
that $\delta'(z)\,=\, g^{-1}\delta(z)g$ for all $z\, \in\, \Gamma(x_0)$.

We have the following:

\begin{theorem}\label{thm3}
There is a natural bijective correspondence between the equivalence classes of
elements of ${\rm Hom}_c(\Gamma(x_0)\, , {\widetilde K})$, and the isomorphism classes
of polystable pseudo-real principal $G$--bundles $(E_G\, ,\rho)$
satisfying the following conditions:
\begin{itemize}
\item $\int_X c_2({\rm ad}(E_G))\wedge\omega^{\dim_{\mathbb C}(X) -2}\,=\, 0$,

\item for any character $\chi$ of $G$, the line bundle over $X$ associated to
$E_G$ for $\chi$ is of degree zero, and

\item the corresponding element in $Z_{\mathbb R}\bigcap K_G$ is $c$
(see Definition \ref{def1}).
\end{itemize}
\end{theorem}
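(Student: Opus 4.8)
The plan is to assemble the theorem from the two constructions already in hand: the \emph{forward} map $F$, which by Proposition~\ref{prop4} and the ensuing discussion sends $\delta\in\mathrm{Hom}_c(\Gamma(x_0),\widetilde K)$ to a polystable pseudo-real bundle $(E_G,\rho)$ with central element $c$ (polystability by Corollary~\ref{cor3}, and the two numerical conditions because the flat Hermitian connection $\nabla^K$ forces all rational characteristic classes of $E_G$ of positive degree to vanish), and the \emph{reverse} construction $R$ of Proposition~\ref{prop5}, which from such a bundle produces an element of $\mathrm{Hom}_c$. It remains only to check that $F$ descends to equivalence/isomorphism classes and that $F$ and $R$ are mutually inverse; these two facts yield the asserted bijection.

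First I would show that $F$ is well defined on equivalence classes. If $\delta'=g^{-1}\delta g$ with $g\in K_G$, then $\delta'\vert_{\pi_1(X,x_0)}$ is the conjugate of $\delta\vert_{\pi_1(X,x_0)}$, so the two flat principal $K_G$--bundles are isomorphic, and hence so are the induced holomorphic principal $G$--bundles. That this isomorphism intertwines the two anti-holomorphic structures is exactly the content of Lemma~\ref{le3}, which says that $\rho_{\sigma_X(x_0)}$ is insensitive to $K_G$--conjugation of $\delta$; together with Lemma~\ref{le1} (independence of the connecting path) this identifies the resulting maps $\rho_X$. Thus $F([\delta])$ is a well-defined isomorphism class.

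Next I would verify that $F$ and $R$ invert one another. Running $R$ after $F$ is tautological: the flat connection on the bundle built from $\delta$ has holonomy $\delta\vert_{\pi_1(X,x_0)}$ relative to the canonical base point, and the defining relation $\rho_{\sigma_X(x_0)}(z'_0 g)=z_0\sigma_G(g_\gamma^{-1}g)$ recovers the values of $\delta$ on $\eta^{-1}(1)$ verbatim, so $R(F(\delta))=\delta$ for the natural choices. Conversely, for a bundle $(E_G,\rho)$ satisfying the two numerical conditions, the Einstein--Hermitian connection $\nabla$ is flat by \cite[p.~115, Lemma~4.12]{Ko} (applied to the vector bundles associated to irreducible representations of $G$), so the flat $K_G$--bundle reconstructed from the holonomy is isomorphic to the chosen $\rho$--invariant reduction $E_{K_G}$, and the gluing map of $F$ is by construction the given $\rho_{\sigma_X(x_0)}$; thus $F(R(E_G,\rho))\cong(E_G,\rho)$. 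This already gives surjectivity of $F$.

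For injectivity I must promote $R$ to a well-defined map on isomorphism classes, that is, show it is independent of the two choices it involves and natural under isomorphisms. Independence of the base point $z_0\in(E_{K_G})_{x_0}$ is immediate: replacing $z_0$ by $z_0 g_0$ with $g_0\in K_G$ conjugates $\delta$ by $g_0$. Naturality under an isomorphism $\mu$ of pseudo-real bundles is also clear, since $\mu$ carries a $\rho$--invariant Einstein--Hermitian reduction and base point of one bundle to such data on the other, holonomy and gluing being manifestly transported. \textbf{The main obstacle is the remaining ambiguity: the choice of the $\rho$--invariant Einstein--Hermitian reduction $E_{K_G}$ giving $\nabla$, which in the strictly polystable case ranges over the positive-dimensional space $\mathcal M$ of Proposition~\ref{prop-i}.} Two such reductions differ as $E'_{K_G}=\exp(s)(E_{K_G})$ with $s$ a $\nabla^{\mathrm{ad}}$--flat, $\widetilde\rho$--invariant section of $\mathcal S$ (cf. the proof of Proposition~\ref{prop-i}). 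The point that resolves the obstacle is that flatness of $s$ forces its base-point value $v_0=s(x_0)$ to be fixed by the holonomy, $\mathrm{Ad}(g_\gamma)v_0=v_0$ for all $\gamma\in\pi_1(X,x_0)$; hence $\exp(v_0)$ commutes with every $g_\gamma$, and choosing the base point $z_0\exp(v_0)\in(E'_{K_G})_{x_0}$ leaves $\delta\vert_{\pi_1(X,x_0)}$ unchanged, while the $\widetilde\rho$--invariance of $s$ matches the values on $\eta^{-1}(1)$. Therefore distinct choices yield $K_G$--equivalent elements, so $R$ descends, and the identity $R\circ F=\mathrm{id}$ then forces $[\delta_1]=[\delta_2]$ whenever $F([\delta_1])\cong F([\delta_2])$, completing the proof of bijectivity.
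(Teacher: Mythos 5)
Your proof is correct and follows the same route as the paper: the forward construction of Proposition \ref{prop4} and the reverse construction of Proposition \ref{prop5}, checked to be mutually inverse and to descend to equivalence and isomorphism classes respectively. You are in fact more explicit than the paper (which only asserts that the two constructions are ``clearly'' inverse to one another) about the one genuinely delicate point, namely the independence of the choice of $\rho$--invariant Einstein--Hermitian reduction in the strictly polystable case, and your resolution --- using that the flat section $s$ has holonomy-invariant values and is forced to be $\widetilde{\rho}$--invariant, so that the resulting element of ${\rm Hom}_c(\Gamma(x_0)\, ,{\widetilde K})$ is unchanged --- is sound.
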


\section{Pseudo-real Higgs $G$--bundles}\label{sec-Higgs}

Consider the differential $d\sigma_X$ in \eqref{e-1}. Using the
natural identification of the holomorphic tangent bundle $TX$ with the real tangent
bundle $T^{\mathbb R}X$, this $d\sigma_X$ produces a $C^\infty$ involution of the
total space of $TX$ over the involution
$\sigma_X$. Since $d\sigma_X\circ J\,=\, - J\circ d\sigma_X$, this
involution of the total space of $TX$ is anti-holomorphic. Let
$$
\widehat{\sigma}\, :\, (TX)^* \,=\, \Omega^1_X\, \longrightarrow\, \Omega^1_X
$$
be the anti-holomorphic involution given by the above involution of $TX$.
Note that $\widehat{\sigma}$ is fiberwise conjugate linear.

Let $(E_G\, ,\rho)$ be a pseudo-real principal $G$--bundle on $X$. The
involution $\widetilde \rho$ of $\text{ad}(E_G)$ in \eqref{rp2} and the
above involution $\widehat{\sigma}$ of $\Omega^1_X$
together produce an anti-holomorphic
involution
\begin{equation}\label{t}
{\widetilde \rho}\otimes \widehat{\sigma}\, :\, \text{ad}(E_G)\otimes
\Omega^1_X\, \longrightarrow\, \text{ad}(E_G)\otimes\Omega^1_X\, .
\end{equation}

A \text{Higgs field} on $(E_G\, ,\rho)$ is a holomorphic section
$$
\theta\, \in\, H^0(X,\, \text{ad}(E_G)\otimes\Omega^1_X)
$$
such that
\begin{itemize}
\item ${\widetilde \rho}\otimes \widehat{\sigma}(\theta)\,=\, \theta$,
where ${\widetilde \rho}\otimes \widehat{\sigma}(\theta)$ is defined in
\eqref{t}, and

\item the holomorphic section $\theta\bigwedge\theta$ of $\text{ad}(E_G)\otimes
\Omega^2_X$ vanishes identically.
\end{itemize}
The above section $\theta\bigwedge\theta$ is defined using the Lie algebra
structure of the fibers of $\text{ad}(E_G)$ and the natural homomorphism
$\Omega^1_X\otimes \Omega^1_X\, \longrightarrow\, \Omega^2_X$.

A \textit{pseudo-real principal Higgs $G$--bundle} is a pseudo-real principal
$G$--bundle equipped with a Higgs field.

Definition \ref{def3} extends as follows:

\begin{definition}\label{def3-2}
A pseudo-real principal Higgs $G$--bundle $(E_G\, ,\rho\, ,\theta)$
over $X$ is called {\em semistable} (respectively, {\em stable})
if for every pair of the form $(U\, ,{\mathfrak p})$, where
\begin{itemize}
\item $\iota_U\, :\, U\, \hookrightarrow\, X$ is a dense open subset
with $\sigma_X(U)\,=\, U$ such that
the complement $X\setminus U$ is a closed complex analytic subset of $X$ of (complex)
codimension at least two,

\item ${\mathfrak p}\, \subsetneq\, {\rm ad}(E_G)\vert_U$ is a parabolic subalgebra
bundle over $U$ such that $\widetilde{\rho}({\mathfrak p})\,=\,{\mathfrak p}$,
and $\iota_{U*} {\mathfrak p}$ is a coherent analytic
sheaf (see Remark \ref{re1}), and

\item $\theta\vert_U\, \in \, H^0(U,\, {\mathfrak p}\otimes\Omega^1_U)$, 
\end{itemize}
we have
${\rm degree}(\iota_{U*}{\mathfrak p})\, \leq\, 0~\, ~\,~{\rm (respectively,~
{\rm degree}(\iota_{U*}{\mathfrak p})\, <\, 0)}$.
\end{definition}

Let ${\mathfrak p}\, \subset\, \text{ad}(E_G)$ be a parabolic subalgebra
bundle such that $\widetilde{\rho}({\mathfrak p})\,=\, {\mathfrak p}$
and $\theta\, \in\, H^0(X,\, {\mathfrak p}\otimes\Omega^1_X)$.
Let $\ell({\mathfrak p})\, \subset\, {\mathfrak p}$ be a Levi subalgebra
bundle such that $\widetilde{\rho}(\ell({\mathfrak p}))\,=\, \ell({\mathfrak p})$
and $\theta\, \in\, H^0(X,\, \ell({\mathfrak p})\otimes\Omega^1_X)$.

The pair $(\ell({\mathfrak p})\, , \theta)$
is called {\em semistable} (respectively, {\em stable})
if for every pair of the form $(U\, ,{\mathfrak q})$, where
\begin{itemize}
\item $\iota_U\, :\, U\, \hookrightarrow\, X$ is a dense open subset
with $\sigma_X(U)\,=\, U$ such that the complement $X\setminus U$
is a closed complex analytic subset of $X$ of (complex) codimension at least two,

\item ${\mathfrak q}\, \subsetneq\, \ell({\mathfrak p})\vert_U$ is a parabolic
subalgebra bundle over $U$ such that $\widetilde{\rho}({\mathfrak q})\,=\,
{\mathfrak q}$, and the direct image $\iota_{U*} {\mathfrak q}$ is a coherent
analytic sheaf, and

\item $\theta\vert_U\, \in\, H^0(U,\, {\mathfrak q}\otimes\Omega^1_U)$,
\end{itemize}
we have
$$
{\rm degree}(\iota_{U*}{\mathfrak q})\, \leq\, 0~\, ~\,~{\rm (respectively,~
{\rm degree}(\iota_{U*}{\mathfrak q})\, <\, 0)}\, .
$$

\begin{definition}\label{def4-2}
A semistable pseudo-real principal Higgs $G$--bundle $(E_G\, ,\rho\, ,\theta)$ 
over $X$ is called {\em polystable} if either $(E_G\, ,\rho)$ is stable,
or there is a proper parabolic subalgebra bundle ${\mathfrak p}\,\subsetneq\,
{\rm ad}(E_G)$, and a Levi subalgebra bundle $\ell({\mathfrak p})\, \subset\,
{\mathfrak p}$, such that the following conditions hold:
\begin{enumerate}
\item $\widetilde{\rho}({\mathfrak p})\, =\, {\mathfrak p}$ and
$\widetilde{\rho}(\ell({\mathfrak p}))\, =\, \ell({\mathfrak p})$,

\item $\theta\, \in\, H^0(X,\, \ell({\mathfrak p})\otimes\Omega^1_X)$, and

\item $(\ell({\mathfrak p}),\, \theta)$ is stable (stability is defined above).
\end{enumerate}
\end{definition}

\begin{lemma}\label{lem-h}
Let $(E_G\, ,\rho\, ,\theta)$ be a semistable pseudo-real principal Higgs $G$--bundle.
Then the principal Higgs $G$--bundle $(E_G\, ,\theta)$ is semistable.

Let $(E_G\, ,\rho\, ,\theta)$ be a polystable pseudo-real principal Higgs $G$--bundle.
Then $(E_G\, ,\theta)$ is polystable.
\end{lemma}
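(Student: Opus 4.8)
The plan is to mirror the proofs of Lemma~\ref{lem1} and Lemma~\ref{lem2}, working with the adjoint Higgs vector bundle $(\text{ad}(E_G),\,\text{ad}(\theta))$, where $\text{ad}(\theta)\,=\,[\theta,\,-\,]$ is the Higgs field on $\text{ad}(E_G)$ induced by $\theta$ through the fibrewise Lie bracket. Both assertions are contrapositives: I will show that if $(E_G,\,\theta)$ fails to be semistable (respectively, polystable) as a principal Higgs $G$--bundle, then the canonical Higgs subobject witnessing this failure is automatically preserved by $\widetilde\rho$ (see \eqref{rp2}) and is compatible with $\theta$, so it already violates the pseudo-real condition of Definition~\ref{def3-2} (respectively, Definition~\ref{def4-2}). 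The only genuinely new ingredient beyond Lemmas~\ref{lem1} and~\ref{lem2} is that these canonical filtrations now consist of $\text{ad}(\theta)$--invariant subbundles, so one must check that $\widetilde\rho$ respects $\text{ad}(\theta)$--invariance.

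For the semistable statement I would first invoke the Higgs analogue of \cite[p.~214, Proposition~2.10]{AB}, reducing to the assertion that $(\text{ad}(E_G),\,\text{ad}(\theta))$ is a semistable Higgs vector bundle. If this fails, let $V_1\subset\cdots\subset V_n\,=\,\text{ad}(E_G)$ be its Higgs Harder--Narasimhan filtration, whose terms are $\text{ad}(\theta)$--invariant. As in the non-Higgs case the compatibility of $\text{ad}(\theta)$ with the Killing form forces $n$ to be odd, and by the Higgs analogue of \cite[p.~216, Lemma~2.11]{AB} the middle term $V_{(n+1)/2}$ is a parabolic subalgebra bundle over a dense open set $U$ whose complement has complex codimension at least two. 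Being a term of the canonical filtration it satisfies $\theta\vert_U\,\in\,H^0(U,\,V_{(n+1)/2}\otimes\Omega^1_U)$, and granting $\widetilde\rho$--invariance (discussed below) it is a destabilizing, $\widetilde\rho$--invariant, $\theta$--compatible parabolic subalgebra bundle, contradicting semistability of $(E_G,\,\rho,\,\theta)$.

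For the polystable statement I would follow Lemma~\ref{lem-s} and Lemma~\ref{lem2}. If $(E_G,\,\rho,\,\theta)$ is stable, the Higgs analogue of Lemma~\ref{lem-s} shows that $(\text{ad}(E_G),\,\text{ad}(\theta))$ is a polystable Higgs bundle, whence $(E_G,\,\theta)$ is polystable by the Higgs form of \cite[p.~224, Corollary~3.8]{AB}. If $(E_G,\,\rho,\,\theta)$ is polystable but not stable and $(\text{ad}(E_G),\,\text{ad}(\theta))$ were not polystable, I would take the Higgs analogue of the socle filtration \eqref{s-f}, in which each quotient $F_i/F_{i-1}$ is the Higgs socle of $\text{ad}(E_G)/F_{i-1}$; again $n$ is odd, $F_{(n+1)/2}$ is a $\theta$--compatible parabolic subalgebra bundle, and $F_{(n-1)/2}$ is its nilpotent radical. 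The Levi subalgebra bundle $\ell({\mathfrak p})$ provided by the polystability of $(E_G,\,\rho,\,\theta)$ (Definition~\ref{def4-2}) is itself a polystable Higgs bundle with $\theta\,\in\,H^0(X,\,\ell({\mathfrak p})\otimes\Omega^1_X)$, so it embeds in the Higgs socle $F_1\subset F_{(n-1)/2}$ and hence consists of nilpotent elements, contradicting the reductivity of its fibres exactly as in Lemma~\ref{lem2}.

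It remains to justify the $\widetilde\rho$--invariance of the canonical filtrations, which is the only step requiring the Higgs structure. The compatibility condition ${\widetilde\rho}\otimes\widehat\sigma(\theta)\,=\,\theta$ (see \eqref{t}) says precisely that $\widetilde\rho$ is an isomorphism of Higgs vector bundles from $(\text{ad}(E_G),\,\text{ad}(\theta))$ onto $\sigma^*_X\overline{(\text{ad}(E_G),\,\text{ad}(\theta))}$; in particular it carries $\text{ad}(\theta)$--invariant subbundles to $\text{ad}(\theta)$--invariant subbundles. Combined with the degree-preservation property of $\widetilde\rho$ underlying Lemmas~\ref{lem1} and~\ref{lem2} (which rests on $\sigma^*_X\omega\,=\,-\omega$), this shows that $\widetilde\rho$ preserves slopes and Higgs semistability, so it sends the Higgs Harder--Narasimhan (respectively, socle) filtration of $(\text{ad}(E_G),\,\text{ad}(\theta))$ to the corresponding canonical filtration of the conjugate pullback, which by uniqueness is the image of the original; each term is thus preserved. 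I expect the main obstacle to lie not here but in assembling the Higgs structural inputs---the Higgs versions of \cite[Proposition~2.10, Lemma~2.11, Corollary~3.8]{AB} together with the existence and uniqueness of the Higgs socle---though these parallel the non-Higgs arguments already used.
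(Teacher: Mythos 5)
Your proposal is correct and follows essentially the same route as the paper: the paper also reduces to the arguments of Lemmas \ref{lem1} and \ref{lem2}, noting only that the Higgs analogues of Proposition 2.10, Lemma 2.11 and Corollary 3.8 of \cite{AB} hold once one knows that tensor products of semistable (respectively, polystable) Higgs sheaves remain semistable (respectively, polystable), for which it cites \cite{BS}. Your additional verification that $\widetilde\rho$ preserves the Higgs Harder--Narasimhan and socle filtrations is exactly the step the paper leaves implicit in the phrase ``the lemma follows exactly as Lemma \ref{lem1} and Lemma \ref{lem2} do.''
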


\begin{proof}
We begin by noting that the torsionfree part of the tensor product of
two polystable (respectively, semistable) Higgs sheaves
is again polystable (respectively, semistable); see \cite[p. 553, Lemma 4.4]{BS}
and \cite[p. 553, Proposition 4.5]{BS}. Consequently, Proposition 2.10,
Lemma 2.11 and Corollary 3.8 of \cite{AB} extends to Higgs $G$--bundles; in fact, as
noted at then end of \cite{AB}, once the polystability of the torsionfree part of
the tensor product of polystable Higgs sheaves is established, the results of
\cite{AB} extend to Higgs $G$--bundles. Therefore, the lemma follows exactly
as Lemma \ref{lem1} and Lemma \ref{lem2} do.
\end{proof}

Let $(E_G\, ,\theta)$ be a principal Higgs $G$--bundle on $X$.
Let $E_{K_G}\, \subset\, E_G$ be a $C^\infty$ reduction of structure group
to the maximal compact subgroup $K_G$
(see \eqref{dkg}). The Chern connection on $E_G$ for $E_{K_G}$
will be denoted by $\nabla$, and the curvature of $\nabla$ will be denoted by
${\mathcal K}(\nabla)$. Let $\theta^*$ be the adjoint of $\theta$ with respect to
$E_{K_G}$. To describe $\theta^*$ explicitly, first note that we have a canonical
$C^\infty$ decomposition into a direct sum of real vector bundles
$$
\text{ad}(E_G)\,=\,\text{ad}(E_{K_G})\oplus {\mathcal S}\, ,
$$
where ${\mathcal S}$ is defined in \eqref{S}. If $\theta\,=\, \theta_1+ \theta_2$
with respect to this decomposition, then
\begin{equation}\label{ths}
\theta^*\,=\, -\overline{\theta_1} +\overline{\theta_2}\, .
\end{equation}

\begin{definition}\label{def-h-eh}
The Hermitian structure $E_{K_G}\, \subset\, E_G$ is said to be an
{\em Einstein--Hermitian structure} if
there is an element $\lambda$ in the center of $\mathfrak g$ such that
the section
$$
\Lambda({\mathcal K}(\nabla)+[\theta\, ,\theta^*])\, \in\, C^\infty(X,\, \text{ad}(E_G))
$$
coincides with the one given by $\lambda$; here $\Lambda$ as before
is the adjoint of multiplication by the K\"ahler form. If $E_{K_G}\, \subset\,
E_G$ is an Einstein--Hermitian structure, then the corresponding Chern
connection $\nabla$ is called an {\em Einstein--Hermitian connection}. 
\end{definition}

A principal Higgs $G$--bundle admits an Einstein--Hermitian structure if and only if
it is polystable, and furthermore, the Einstein--Hermitian connection on
a polystable principal Higgs $G$--bundle is unique
\cite{SimpsonJAMS}, \cite{Hitchin}, \cite[p. 554, Theorem 4.6]{BS}.

If $(E_G\, ,\theta)$ is a polystable principal Higgs $G$--bundle such that
\begin{enumerate}
\item $\int_X c_2({\rm ad}(E_G))\wedge\omega^{\dim_{\mathbb C}(X) -2}\,=\, 0$, and

\item for any character $\chi$ of $G$, the line bundle over $X$ associated to
$E_G$ for $\chi$ is of degree zero,
\end{enumerate}
then all the rational characteristic classes of $E_G$ of positive degree vanish
\cite[p. 20, Corollary 1.3]{Si2}.

\begin{proposition}\label{cor-h1}
Let $(E_G\, ,\rho\, ,\theta)$ be a pseudo-real principal Higgs $G$--bundle. Then
the principal Higgs $G$--bundle $(E_G\, ,\theta)$ admits an Einstein--Hermitian
structure $E_{K_G}\, \subset\, E_G$ with $\rho(E_{K_G})\,=\, E_{K_G}$
if and only if $(E_G\, ,\rho\, ,\theta)$ is polystable.
\end{proposition}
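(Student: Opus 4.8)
The plan is to run the arguments of Proposition \ref{prop-i} and Proposition \ref{prop3} in parallel for the Higgs setting, replacing Lemma \ref{lem1} and Lemma \ref{lem2} by Lemma \ref{lem-h} and the ordinary Donaldson--Uhlenbeck--Yau correspondence by its Higgs counterpart \cite[p.~554, Theorem 4.6]{BS}. For the forward implication I would first invoke Lemma \ref{lem-h}: polystability of $(E_G,\rho,\theta)$ yields polystability of the principal Higgs $G$--bundle $(E_G,\theta)$, so that $(E_G,\theta)$ carries an Einstein--Hermitian structure with a unique associated Einstein--Hermitian connection $\nabla$.

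The key new step is to show that $\rho$ sends Einstein--Hermitian reductions to Einstein--Hermitian reductions. As in Proposition \ref{prop-i}, for a reduction $E_{K_G}$ the subset $\rho(E_{K_G})$ is again a $K_G$--reduction whose Chern connection is $(d\sigma_G)\circ\rho^*\nabla'$; what is new is that I must track the adjoint $\theta^*$, defined via the splitting $\text{ad}(E_G)=\text{ad}(E_{K_G})\oplus{\mathcal S}$ and formula \eqref{ths}. Using the compatibility ${\widetilde\rho}\otimes\widehat{\sigma}(\theta)=\theta$ I would check that the section $\Lambda({\mathcal K}(\nabla)+[\theta,\theta^*])$ transforms equivariantly to the corresponding section for $\rho(E_{K_G})$, so that $\rho(E_{K_G})$ is again Einstein--Hermitian; uniqueness of the Einstein--Hermitian connection then forces $(d\sigma_G)\circ\rho^*\nabla=\nabla$. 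With $\nabla$ thus fixed by $\rho$, the remainder of Proposition \ref{prop-i} carries over: I would form the space $\mathcal M$ of Hermitian structures inducing $\nabla$, note that the induced $\rho_{\mathcal M}$ is an involution because $c\in K_G$ (Assumption \ref{an}), deduce via Lemma \ref{lem-c} and a faithful representation that the section $s_0$ relating $E^0_{K_G}$ to $\rho_{\mathcal M}(E^0_{K_G})$---and hence $s_0/2$---is $\nabla^{\rm ad}$--flat, and run the fibrewise geodesic--midpoint computation to exhibit $\exp(s_0/2)(E^0_{K_G})$ as the sought $\rho$--invariant Einstein--Hermitian reduction.

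For the reverse implication I would imitate Proposition \ref{prop3}. The induced connection $\nabla^{\rm ad}$ together with the induced Higgs field makes $(\text{ad}(E_G),\theta)$ an Einstein--Hermitian, hence polystable, Higgs bundle; in particular it is semistable, so $(E_G,\rho,\theta)$ is semistable by Lemma \ref{lem-h}. If it is not stable, I would select a smallest ${\widetilde\rho}$--invariant parabolic subalgebra bundle ${\mathfrak p}\subsetneq\text{ad}(E_G)$ of degree zero with $\theta\in H^0(X,\,{\mathfrak p}\otimes\Omega^1_X)$, use the $\nabla^{\rm ad}$--parallel splitting coming from polystability of $\text{ad}(E_G)$ to see that $\nabla^{\rm ad}$ preserves ${\mathfrak p}$, and set ${\mathcal E}:=({\mathfrak p}\cap\text{ad}(E_{K_G}))\otimes_{\mathbb R}{\mathbb C}$. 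This ${\mathcal E}$ is a ${\widetilde\rho}$--invariant Levi subalgebra bundle with $\theta\in H^0(X,\,{\mathcal E}\otimes\Omega^1_X)$, and minimality of ${\mathfrak p}$ forces $({\mathcal E},\theta)$ to be stable, so that $(E_G,\rho,\theta)$ is polystable.

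The hard part will be the equivariance check in the forward implication: verifying that the anti-holomorphic, $\sigma_G$--twisted self-map $\rho$, coupled with $\widehat{\sigma}$ on $\Omega^1_X$, transforms $\theta^*$ so that the full Einstein term ${\mathcal K}(\nabla)+[\theta,\theta^*]$ is sent to the one for $\rho(E_{K_G})$. Everything else---the uniqueness input, the flatness of $s_0$, and the geodesic--midpoint construction---then transfers without essential change from the non-Higgs case.
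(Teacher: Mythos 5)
Your overall strategy---running Proposition \ref{prop-i} and Proposition \ref{prop3} again with the Higgs Donaldson--Uhlenbeck--Yau theorem of \cite{BS} and Lemma \ref{lem-h} in place of their non-Higgs counterparts---is the same as the paper's. But there is a genuine gap at the midpoint step, and it sits exactly where the paper inserts its one new ingredient. You define $\mathcal M$ as the space of Hermitian structures whose Chern connection is $\nabla$, and you justify $\exp(s_0/2)(E^0_{K_G})\in\mathcal M$ by the $\nabla^{\rm ad}$--flatness of $s_0/2$ together with Lemma \ref{lem-c}. In the non-Higgs case that suffices, because being Einstein--Hermitian is a condition on the Chern connection alone. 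In the Higgs case it does not: the Einstein condition is $\Lambda(\mathcal K(\nabla)+[\theta\, ,\theta^*])=\lambda$, and $\theta^*$ is defined in \eqref{ths} via the splitting $\text{ad}(E_G)=\text{ad}(E_{K_G})\oplus\mathcal S$, hence depends on the reduction and not only on the induced connection. Two reductions with the same Chern connection $\nabla$ can have different $\theta^*$, so showing that the midpoint induces $\nabla$ does not show that it is an Einstein--Hermitian structure for $(E_G\, ,\theta)$. Your $\mathcal M$ is the wrong (too large) space, and the fixed point it produces is not yet known to be the object the proposition asks for.

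The missing observation---which is essentially the entire content of the paper's proof beyond ``proceed as before''---is the characterization: if $F_{K_G}$ is an Einstein--Hermitian structure for a Higgs bundle $(F_G\, ,\varphi)$, then $\exp(s)(F_{K_G})$ is again Einstein--Hermitian for $(F_G\, ,\varphi)$ if and only if $s$ is flat for $\nabla^{\rm ad}$ \emph{and} $[s\, ,\varphi]=0$. Both conditions are preserved under $s\longmapsto s/2$, so the set of Einstein--Hermitian reductions (the correct $\mathcal M$) is closed under taking midpoints. Your $s_0$ does satisfy both conditions, because $\rho_{\mathcal M}(E^0_{K_G})$ is Einstein--Hermitian by the equivariance check you describe; with the characterization in hand the midpoint is genuinely Einstein--Hermitian and the rest of your argument, including the reverse implication modelled on Proposition \ref{prop3}, goes through. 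By contrast, the equivariance of the full Einstein term under $\rho$, which you single out as the hard part, is needed but routine; the closure of the Einstein--Hermitian reductions under halving is the step that actually requires a new input in the Higgs setting, and it is the one your write-up asserts ``transfers without essential change.''
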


\begin{proof}
The proof is similar to the proofs of Corollary \ref{cor1} and Proposition \ref{prop3}.
But the following observation is needed to make the proof of Proposition \ref{prop-i}
work in the present situation (Corollary \ref{cor1} is a consequence of
Proposition \ref{prop-i}).

Let $(F_G\, ,\varphi)$ be a polystable principal Higgs $G$--bundle on $X$. Let
$$
F_{K_G}\, \subset\, F_G
$$
be an Einstein--Hermitian structure on $F_G$. Let $\nabla^F$ be the corresponding
Chern connection on $F_G$. The connection on $\text{ad}(F_G)$ induced by
$\nabla^F$ will be denoted by $\nabla^{\rm ad}$. As in \eqref{S}, let
$$
{\mathcal S}\,:=\, \text{ad}(F_{K_G})^\perp \, \subset\, \text{ad}(F_G)
$$
be the orthogonal complement with respect to an Hermitian structure on $\text{ad}(F_G)$
induced by a $K_G$--invariant Hermitian form on $\mathfrak g$. There is a natural
bijective correspondence between the Hermitian structures on $F_G$ and the smooth
sections of ${\mathcal S}$: the Hermitian structure corresponding to a section $s$
is $\exp(s)(F_{K_G})\, \subset\, F_G$.

An Hermitian structure $\exp(s)(F_{K_G})\, \subset\, F_G$ is an
Einstein--Hermitian structure for $(F_G\, ,\varphi)$ if and only if
\begin{itemize}
\item $s$ is flat with respect to the connection $\nabla^{\rm ad}$ on
$\text{ad}(F_G)$, and

\item $[s\, ,\varphi]\,=\, 0$ (it is the section of $\text{ad}(F_G)\otimes \Omega^1_X$
given by the Lie bracket operation on the fibers of $\text{ad}(F_G)$).
\end{itemize}
Therefore, if $\exp(s)(F_{K_G})$ is an Einstein--Hermitian structure for
$(F_G\, ,\varphi)$, then the Hermitian structure
$$
\exp(s/2)(F_{K_G})\, \subset\, F_G
$$
is also an Einstein--Hermitian structure for $(F_G\, ,\varphi)$.

The rest of the proof of Proposition \ref{prop-i} works as before once the above
observation is incorporated.
\end{proof}

Let $\widetilde{G}\, :=\, G \rtimes ({\mathbb Z}/2{\mathbb Z})$ be
the semi-direct product defined by the involution $\sigma_G$. Consider
$\Gamma(x_0)$ defined in Section \ref{sec4}. Let
$\text{Map}'(\Gamma(x_0)\, , {\widetilde G})$ be the space of all maps
$$
\delta\, :\, \Gamma(x_0)\,\longrightarrow\, {\widetilde G}
$$
such that the following diagram is commutative:
\begin{equation}\label{dil}
\begin{matrix}
e & \longrightarrow & \pi_1(X,x_0)&\longrightarrow &\Gamma(x_0) &
\stackrel{\eta}{\longrightarrow} &
{\mathbb Z}/2{\mathbb Z} &\longrightarrow & e\\
&& \Big\downarrow && ~\Big\downarrow\delta && \Vert\\
e & \longrightarrow & G &\longrightarrow &{\widetilde G} &
\longrightarrow &
{\mathbb Z}/2{\mathbb Z} &\longrightarrow & e
\end{matrix}
\end{equation}

For an element $c\, \in\, Z_{\mathbb R}\bigcap K_G$, let
$\text{Hom}_c(\Gamma(x_0)\, , {\widetilde G})$ be the space of all maps
$$
\delta\, \in\, \text{Map}'(\Gamma(x_0)\, , {\widetilde G})
$$
such that
\begin{itemize}
\item the restriction of $\delta$ to $\pi_1(X,x_0)$ is a homomorphism of groups,

\item $\delta(g'g)\,=\, c\delta(g')\delta(g)$, if $\eta(g)\, =\,1\,=\,
\eta(g')$ (the homomorphism $\eta$ is defined in \eqref{fg}), and

\item $\delta(g'g)\,=\, \delta(g')\delta(g)$ otherwise (meaning
if $\eta(g)\cdot \eta(g')\, =\,0$),
\end{itemize}

If $c\,=\, e$, then $\text{Hom}_c(\Gamma(x_0)\, , {\widetilde G})$
is the space of all homomorphisms from $\Gamma(x_0)$ to ${\widetilde G}$
satisfying \eqref{dil}.

Two elements $\delta'\, ,\delta'\, \in\, \text{Hom}_c(\Gamma(x_0)\, , {\widetilde
G})$ are called \textit{equivalent} if there is an element $g\, \in\, G$ such
that $\delta'(z)\,=\, g^{-1}\delta(z)g$ for all $z\, \in\, \Gamma(x_0)$.

Let $H$ be a connected complex reductive affine algebraic group. A homomorphism
$$
\gamma\, :\, \pi_1(X,x_0)\, \longrightarrow\, H
$$
is called \textit{irreducible} if the image $\gamma(\pi_1(X,x_0))$ is not contained
in some proper parabolic subgroup of $H$. A homomorphism
$$
\gamma\, :\, \pi_1(X,x_0)\, \longrightarrow\, G
$$
is called \textit{completely reducible} if there is a parabolic subgroup $P\, \subset\,
G$ and a Levi factor $L(P)$ of $P$ (see \cite[p. 184]{Hu}, \cite{Bo} for Levi factor)
such that
\begin{itemize}
\item $\gamma(\pi_1(X,x_0))\, \subset\, L(P)$, and

\item the homomorphism $\gamma\, :\, \pi_1(X,x_0)\, \longrightarrow\, L(P)$
is irreducible.
\end{itemize}

A map $$\delta\, \in\, \text{Hom}_c(\Gamma(x_0)\, , {\widetilde G})$$ is called
\textit{completely reducible} if the homomorphism
$\delta\vert_{\pi_1(X,x_0)}$ is completely reducible. Note that if
$\delta$ is is completely reducible, then all elements in
$\text{Hom}_c(\Gamma(x_0)\, , {\widetilde G})$ equivalent to $\delta$ are
also completely reducible.

\begin{proposition}\label{cor-h2}
There is a natural bijective correspondence between the equivalence classes of
completely reducible elements of ${\rm Hom}_c(\Gamma(x_0)\, , {\widetilde G})$, and
the isomorphism classes of polystable pseudo-real principal Higgs $G$--bundles
$(E_G\, ,\rho\, ,\theta)$ satisfying the following conditions:
\begin{itemize}
\item $\int_X c_2({\rm ad}(E_G))\wedge\omega^{\dim_{\mathbb C}(X) -2}\,=\, 0$,

\item for any character $\chi$ of $G$, the line bundle over $X$ associated to
$E_G$ for $\chi$ is of degree zero, and

\item the corresponding element in $Z_{\mathbb R}\bigcap K_G$ is $c$
(see Definition \ref{def1}).
\end{itemize}
\end{proposition}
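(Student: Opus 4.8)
The plan is to run the proof of Theorem~\ref{thm3} with its single analytic input replaced: in place of the flat \emph{unitary} connection $\nabla^K$ of Section~\ref{sec4}, we use the flat $G$-connection produced by nonabelian Hodge theory. Recall that, by the theorem of Corlette and Simpson in its principal-bundle form (\cite[p.~554, Theorem~4.6]{BS}, \cite{SimpsonJAMS}, \cite{Hitchin}), a completely reducible homomorphism $\pi_1(X,x_0)\,\longrightarrow\, G$ corresponds to a polystable principal Higgs $G$-bundle $(E_G\, ,\theta)$ carrying an Einstein--Hermitian reduction $E_{K_G}\,\subset\, E_G$, the correspondence being realized by the holonomy of the flat connection
$$
D\,=\,\nabla+\theta+\theta^*\, ,
$$
where $\nabla$ is the Chern connection of $E_{K_G}$ and $\theta^*$ is the adjoint of $\theta$ (see \eqref{ths}). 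Under the two numerical hypotheses in the statement, \cite[p.~20, Corollary~1.3]{Si2} guarantees that the positive-degree rational characteristic classes of $E_G$ vanish, so the Higgs bundles appearing here are precisely those in the image of this correspondence.

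For the forward direction we start with a completely reducible $\delta\,\in\,{\rm Hom}_c(\Gamma(x_0)\, ,{\widetilde G})$ and feed the completely reducible homomorphism $\delta\vert_{\pi_1(X,x_0)}$ into the above correspondence to obtain $(E_G\, ,\theta)$ together with its harmonic reduction $E_{K_G}$. The pseudo-real structure $\rho$ is then built from the values of $\delta$ on $\eta^{-1}(1)$ exactly as the map $\rho_X$ was constructed in Section~\ref{sec4}: we define the fiberwise map using $\sigma_G$ and an element projecting to $\eta=1$, verify its independence of all choices as in Lemmas~\ref{le1} and~\ref{le3}, and propagate it over $X$ by parallel transport for $D$. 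The cocycle conditions defining ${\rm Hom}_c$ translate, precisely as in Proposition~\ref{prop4}, into the identity $\rho^2\,=\,c$, so $(E_G\, ,\rho)$ is a pseudo-real principal $G$-bundle with central element $c$. Since $\sigma_G(K_G)\,=\,K_G$, the reduction $E_{K_G}$ is preserved by $\rho$, whence Proposition~\ref{cor-h1} yields polystability of $(E_G\, ,\rho\, ,\theta)$ once we know $\theta$ satisfies the reality condition $\widetilde{\rho}\otimes\widehat{\sigma}(\theta)\,=\,\theta$; the latter is exactly the assertion that $\rho$ preserves $D$ modulo $\sigma_G$, addressed below.

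For the reverse direction, let $(E_G\, ,\rho\, ,\theta)$ be a polystable pseudo-real Higgs $G$-bundle satisfying the numerical conditions. Proposition~\ref{cor-h1} supplies an Einstein--Hermitian reduction $E_{K_G}\,\subset\, E_G$ with $\rho(E_{K_G})\,=\,E_{K_G}$, and \cite{Si2} gives flatness of $D\,=\,\nabla+\theta+\theta^*$. We then copy the construction of \eqref{de}: the holonomy of $D$ about loops at $x_0$ defines $\delta$ on $\pi_1(X,x_0)$, while parallel transport along paths terminating at $\sigma_X(x_0)$, composed with $\rho$ and $\sigma_G$, defines $\delta$ on $\eta^{-1}(1)$. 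That this $\delta$ lies in ${\rm Hom}_c(\Gamma(x_0)\, ,{\widetilde G})$ is verified verbatim as in Proposition~\ref{prop5}, the only new input being that $\rho$ preserves $D$ up to $\sigma_G$; complete reducibility of $\delta\vert_{\pi_1(X,x_0)}$ is equivalent to polystability of $(E_G\, ,\theta)$, which holds by Lemma~\ref{lem-h}. The two assignments are mutually inverse by the same bookkeeping as in Section~\ref{sec4}.

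The main obstacle is the single new lemma underlying both directions: the equivalence between the Higgs reality condition $\widetilde{\rho}\otimes\widehat{\sigma}(\theta)\,=\,\theta$ and the statement that $\rho$ preserves the flat connection $D$, i.e.\ that $(d\sigma_G)\circ\rho^*D\,=\,D$. That $\rho$ preserves the Chern part $\nabla$ is inherited from the $\rho$-invariance of $E_{K_G}$, exactly as in Proposition~\ref{prop-i}. The delicate point is the term $\theta+\theta^*$: because $\rho$ is anti-holomorphic over $\sigma_X$, its pullback interchanges the $(1,0)$ and $(0,1)$ types, so we must check that the conjugate-linear involution $\widehat{\sigma}$ of $\Omega^1_X$ and the metric adjoint $\theta\,\longmapsto\,\theta^*$ determined by the $\rho$-invariant reduction $E_{K_G}$ conspire to send $\theta$ to $\theta^*$ and back, leaving the sum $\theta+\theta^*$ fixed after applying $d\sigma_G$. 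Once this compatibility is established, every remaining step is identical to the corresponding step in Section~\ref{sec4}.
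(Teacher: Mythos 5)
Your overall strategy coincides with the paper's: replace the flat unitary connection of Section~\ref{sec4} by the flat connection $D\,=\,\nabla+\theta+\theta^*$ supplied by the nonabelian Hodge correspondence, rerun the constructions of Propositions~\ref{prop4} and~\ref{prop5}, and use Proposition~\ref{cor-h1} and Lemma~\ref{lem-h} to match polystability with complete reducibility. However, there is a genuine gap in your direction from a completely reducible $\delta\,\in\,{\rm Hom}_c(\Gamma(x_0)\, ,{\widetilde G})$ to a pseudo-real Higgs bundle. You assert that ``since $\sigma_G(K_G)\,=\,K_G$, the reduction $E_{K_G}$ is preserved by $\rho$.'' This is false as stated. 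In the unitary case of Theorem~\ref{thm3} the flat connection is a $K_G$--connection, so $D$--parallel transport preserves $E_{K_G}$ and the fiberwise formula for $\rho$ manifestly maps $E_{K_G}$ to itself; here $D$ is a flat $G$--connection, its parallel transport does not preserve the harmonic reduction, and the elements $g_\gamma$ read off from $\delta$ lie in $G$ rather than $K_G$. All one gets for free is that $\rho(E_{K_G})$ is \emph{another} harmonic reduction for $(E_G\, ,D)$, not that it equals $E_{K_G}$. The paper closes this gap by an explicit fixed-point argument: it observes that the harmonic reductions for $(E_G\, ,D)$ are exactly those of the form $\exp(s)(E_{K_G})$ with $D^{\rm ad}(s)\,=\,0$, hence the set of harmonic reductions is stable under passing to the geodesic midpoint $\exp(s/2)(E_{K_G})$, and then the midpoint argument from the proof of Proposition~\ref{prop-i} produces a harmonic reduction fixed by the involution $\rho_{\mathcal M}$. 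Without this step your appeal to Proposition~\ref{cor-h1} to conclude polystability of $(E_G\, ,\rho\, ,\theta)$ does not go through.

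A secondary issue: you correctly isolate as ``the main obstacle'' the compatibility between the reality condition $\widetilde{\rho}\otimes\widehat{\sigma}(\theta)\,=\,\theta$ and the statement that $\rho$ preserves $D$ (equivalently, that the decomposition $D\,=\,\nabla^{1,0}+\nabla^{0,1}+\theta+\theta^*$ is respected by $\rho$ together with $d\sigma_G$ and the type-reversal coming from anti-holomorphy), but you do not actually prove it; the proposal ends with ``once this compatibility is established.'' Since both directions of the bijection hinge on it, this verification needs to be carried out, not merely announced. With the fixed-point argument for the harmonic reduction and this compatibility check supplied, the rest of your outline matches the paper's proof.
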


\begin{proof}
The proof is similar to the proof of Theorem \ref{thm3} after we incorporate
Proposition \ref{cor-h1}. To explain this, take a polystable
pseudo-real principal Higgs $G$--bundle $(E_G\, ,\rho\, ,\theta)$ such that
\begin{itemize}
\item $\int_X c_2({\rm ad}(E_G))\wedge \omega^{\dim_{\mathbb C}(X) -2}\,=\, 0$,

\item for any character $\chi$ of $G$, the line bundle over $X$ associated to
$E_G$ for $\chi$ is of degree zero, and

\item the corresponding element in $Z_{\mathbb R}\bigcap K_G$ is $c$.
\end{itemize}
These conditions imply that all the rational characteristic classes of $E_G$ of
positive degree vanish \cite[p. 20, Corollary 1.3]{Si2}. From Proposition \ref{cor-h1}
we know that $(E_G\, ,\rho\, ,\theta)$ admits an Einstein--Hermitian structure
$E_{K_G}\, \subset\, E_G$ such that $\rho(E_{K_G})\,=\, E_{K_G}$. Let $\nabla^G$ be
the corresponding Chern connection. Define $\theta^*$ as done in \eqref{ths}. Consider
the connection
$$
D\,:=\, \nabla^G+\theta+\theta^*
$$
on $E_G$. It is a flat connection because all the rational characteristic
classes of $E_G$ of positive degree vanish; the monodromy representation for
$D$ is completely reducible \cite[p. 20, Corollary 1.3]{Si2}, \cite[Theorem 1.1]{BG}.

In Theorem \ref{thm3}, consider the construction of an element of
${\rm Hom}_c(\Gamma(x_0)\, , {\widetilde K})$ from a polystable pseudo--real
principal $G$--bundle $F_G$ such that
$\int_X c_2({\rm ad}(F_G))\wedge\omega^{\dim_{\mathbb C}(X) -2}\,=\, 0$
and for any character $\chi$ of $G$, the line bundle over $X$ associated to
$F_G$ for $\chi$ is of degree zero (see Proposition \ref{prop5}). In this construction,
replace the flat Einstein--Hermitian connection $\nabla$ by the flat connection
$D$ constructed above. It yields a completely reducible element
of ${\rm Hom}_c(\Gamma(x_0)\, , {\widetilde G})$.

For the reverse direction, take a completely reducible element
$$
\delta\, \in\, {\rm Hom}_c(\Gamma(x_0)\, , {\widetilde G})\, .
$$
Consider the homomorphism $\delta\vert_{\pi_1(X,x_0)}$. It gives a flat
principal $G$--bundle $(E_G\, ,D)$ and a point $z_0\,\in\,(E_G)_{x_0}$.

In Theorem \ref{thm3}, consider the construction of a pseudo-real principal
$G$--bundle from an element of ${\rm Hom}_c(\Gamma(x_0)\, , {\widetilde K})$
(see Proposition \ref{prop4}). In this construction, replace the
flat Hermitian connection $\nabla$
by the given flat connection $D$ on $E_G$. It yields a pseudo-real structure
\begin{equation}\label{r2}
\rho\, :\, E_G\, \longrightarrow\, E_G
\end{equation}
on the principal $G$--bundle $E_G$.

Since the monodromy representation for $D$ is completely reducible, a theorem
of Corlette says that $E_G$ admits a harmonic reduction
$$
E_{K_G}\, \subset\, E_G
$$
(see \cite[p. 368, Theorem 3.4]{Co}, \cite[p. 19, Theorem 1]{Si2}).
We will show that the harmonic reduction $E_{K_G}$ can be so chosen
that it satisfies the condition
\begin{equation}\label{ih}
\rho(E_{K_G})\,=\, E_{K_G}\, ,
\end{equation}
where $\rho$ is the pseudo-real structure obtained in \eqref{r2}.

To prove this, take a harmonic reduction $E_{K_G}\, \subset\, E_G$. As in \eqref{S},
let
$$
{\mathcal S}\,:=\, \text{ad}(E_{K_G})^\perp \, \subset\, \text{ad}(E_G)
$$
be the orthogonal complement with respect to an Hermitian structure on $\text{ad}(E_G)$
induced by a $K_G$--invariant Hermitian form on $\mathfrak g$.
We recall that every Hermitian structures on $E_G$ is of the form
$\exp(s)(E_{K_G})$, where $s$ is a smooth sections of ${\mathcal S}$.

Let $D^{\rm ad}$ be the flat connection on $\text{ad}(E_G)$ induced by the
connection $D$ on $E_G$. An Hermitian structure
$$
\exp(s)(E_{K_G})\, \subset\, E_G
$$
is a harmonic reduction for $(E_G\, ,D)$ if and only if
$$
D^{\rm ad}(s)\,=\, 0\, .
$$
Therefore, if $\exp(s)(E_{K_G})\, \subset\, E_G$ is a harmonic reduction for
$(E_G\, ,D)$, then
$$
\exp(s/2)(E_{K_G})\, \subset\, E_G
$$
is also a harmonic reduction for $(E_G\, ,D)$.
Now the proof of Proposition \ref{prop-i} gives that there is a
harmonic reduction $E_{K_G}$ for $(E_G\, ,D)$ such that \eqref{ih} holds.

Let $(E'_G\, ,\theta)$ be the principal Higgs $G$--bundle corresponding
to the triple $(E\, ,D\, ,E_{K_G})$,
where $E_{K_G}$ satisfies \eqref{ih}. So
$$
D\,=\, \nabla+\theta+\theta^*\,=\,\nabla^{1,0}+\nabla^{0,1}+\theta+\theta^*\, ,
$$
such that the following three conditions hold:
\begin{enumerate}
\item $\nabla$ is a connection on $E_G$ coming from a connection on $E_{K_G}$.

\item $\nabla^{0,1}\circ \nabla^{0,1}\,=\, 0$, meaning $\nabla^{0,1}$ defines a
holomorphic structure on the $C^\infty$ principal $G$--bundle $E_G$. This
holomorphic principal $G$--bundle $(E_G\, ,\nabla^{0,1})$ is denoted by $E'_G$.

\item $\theta$ is a Higgs field on the holomorphic principal $G$--bundle
$E'_G$.
\end{enumerate}
(See \cite[p. 13]{Si2}.) The triple $(E'_G\, ,\rho\, ,\theta)$, where
$\rho$ is constructed in \eqref{r2}, is a polystable pseudo-real principal
Higgs $G$--bundle.
\end{proof}

%%%%%%%%%%%%%%%%%%%%%%%%%%%%%%%%%%%%%%%%%%%%%%%%%%%%%%%%%%%%%%%%%


\begin{thebibliography}{ZZZZ}

\bibitem[AB]{AB} B. Anchouche and I. Biswas: Einstein--Hermitian
connections on polystable principal bundles over a compact
K\"ahler manifold, \textit{Amer. Jour. Math.} \textbf{123}
(2001), 207--228.

\bibitem[At]{At} M. F. Atiyah: Complex analytic connections in fibre
bundles, \textit{Trans. Amer. Math. Soc.} \textbf{85} (1957), 181--207.

\bibitem[Be]{Be} K. A. Behrend: Semi-stability of reductive group
schemes over curves, \textit{Math. Ann.} \textbf{301}
(1995), 281--305.

\bibitem[BG]{BG} I. Biswas and T. L. G\'omez: Connections and Higgs fields on
a principal bundle, {\it Ann. Glob. Anal. Geom.} \textbf{33} (2008), 19--46.

\bibitem[BH]{BH} I. Biswas and J. Hurtubise: Principal bundles over
a real algebraic curve, {\it Comm. Anal. Geom.} (to appear).

\bibitem[BHH]{BHH} I. Biswas, J. Huisman and J. Hurtubise: The moduli
space of stable vector bundles over a real algebraic curve, {\it
Math. Ann.} \textbf{347} (2010), 201--233.

\bibitem[BS]{BS} I. Biswas and G. Schumacher: Yang--Mills equation for
stable Higgs sheaves, {\it Int. Jour. Math.} \textbf{20} (2009), 541--556.

\bibitem[Bo]{Bo} A. Borel: \textit{Linear algebraic groups},
Second edition, Graduate Texts in Mathematics, 126. Springer--Verlag,
New York, 1991.

\bibitem[Co]{Co} K. Corlette: Flat G-bundles with canonical metrics,
\textit{Jour. Diff. Geom.} \textbf{28} (1988), 361--382.

\bibitem[Do]{Do} S.~K.~Donaldson: Infinite determinants, stable bundles
and curvature, \textit{Duke Math. Jour.} \textbf{54} (1987), 231--247.

\bibitem[Hi]{Hitchin} N. J. Hitchin: The self-duality equations on a
Riemann surface, {\it Proc. London Math. Soc.} \textbf{55} (1987), 59--126.

\bibitem[Hu]{Hu} J. E. Humphreys: \textit{Linear algebraic
groups,} Graduate Texts in Mathematics, Vol. 21,
Springer-Verlag, New York, Heidelberg, Berlin, 1981.

\bibitem[HL]{HL} D. Huybrechts and M. Lehn: \textit{The geometry
of moduli spaces of sheaves}, Aspects of Mathematics,
E31. Friedr.\ Vieweg \& Sohn, Braunschweig, 1997.

\bibitem[Ko]{Ko} S. Kobayashi, \textit{Differential geometry
of complex vector bundles}, Princeton University Press, Princeton,
NJ, Iwanami Shoten, Tokyo, 1987.

\bibitem[Ra]{Ra} A. Ramanathan: Stable principal bundles on a
compact Riemann surface, \textit{Math. Ann.} \textbf{213}
(1975), 129--152.

\bibitem[RS]{RS} A. Ramanathan and S. Subramanian: Einstein-Hermitian
connections on principal bundles and stability, {\it Jour. Reine
Angew. Math.} \textbf{390} (1988), 21--31.

\bibitem[Si1]{SimpsonJAMS} C.~T. Simpson: Constructing variations of {H}odge
structure using {Y}ang--{M}ills theory and applications to uniformization,
{\it Jour. Amer. Math. Soc.} \textbf{1} (1988), 867--918.

\bibitem[Si2]{Si2} C. T. Simpson: Higgs bundles and
local systems, \textit{Inst. Hautes \'Etudes Sci. Publ. Math.}
\textbf{75} (1992), 5--95.

\bibitem[UY]{UY} K. Uhlenbeck and S.-T. Yau: On the existence of
Hermitian--Yang--Mills connections in stable vector bundles, \textit{Commun.
Pure Appl. Math.} \textbf{39} (1986), 257--293.

\end{thebibliography}
\end{document}